\newsavebox{\@brx}
\newcommand{\llangle}[1][]{\savebox{\@brx}{\(\m@th{#1\langle}\)}%
	\mathopen{\copy\@brx\kern-0.5\wd\@brx\usebox{\@brx}}}
\newcommand{\rrangle}[1][]{\savebox{\@brx}{\(\m@th{#1\rangle}\)}%
	\mathclose{\copy\@brx\kern-0.5\wd\@brx\usebox{\@brx}}}
\newtheorem{theorem}{Theorem}[section]
\newtheorem{corollary}[theorem]{Corollary}
\newtheorem{lemma}[theorem]{Lemma}
\newtheorem{proposition}[theorem]{Proposition}
\theoremstyle{definition}
\newtheorem{definition}[theorem]{Definition}
\newtheorem{example}[theorem]{Example}
\newtheorem{remark}[theorem]{Remark}
\numberwithin{equation}{subsection}
\newtheorem*{ack}{Acknowledgement}
\newcommand{\Aut}{\operatorname{Aut}}
\newcommand{\SR}{\mathbf{SR}}
\newcommand{\SQ}{\mathbf{SQ}}
\newcommand{\Conj}{\operatorname{Conj}}
\newcommand{\Core}{\operatorname{Core}}
\newcommand{\C}{\mathbf{C}}
\newcommand{\Ab}{\mathbf{Ab}}
\newcommand{\T}{\mathrm{T}}
\newcommand{\e}{\epsilon}
\newcommand{\Sa}{\textrm{S}}
\newcommand{\Hom}{\operatorname{Hom}}
\newcommand{\id}{\mathrm{id}}
\newcommand{\ie}{\mathrm{i.e.}}
\begin{document}
	\setcounter{section}{0}
	\title[Generalized (co)homology of symmetric quandles]{Generalized (co)homology of symmetric quandles over homogeneous Beck modules}
	\author{Biswadeep Karmakar}	
	\author{Deepanshi Saraf}
	\author{Mahender Singh}
	
	\address{Department of Mathematical Sciences, Indian Institute of Science Education and Research (IISER) Mohali, Sector 81,  S. A. S. Nagar, P. O. Manauli, Punjab 140306, India.}
	\email{biswadeep.isi@gmail.com}
	\email{saraf.deepanshi@gmail.com}
	\email{mahender@iisermohali.ac.in}
	
	\subjclass[2020]{Primary 57M27, 57K12; Secondary 57Q45}
	
\keywords{abelian object, associated group, cohomology, quandle module, rack module, slice category, symmetric quandle, symmetric rack}
	
\begin{abstract}
A quandle equipped with a good involution is referred to as symmetric. It is known that the cohomology of symmetric quandles gives rise to strong cocycle invariants for classical and surface links, even when they are not necessarily oriented. In this paper, we introduce the category of symmetric quandle modules and prove that these modules completely determine the Beck modules in the category of symmetric quandles. Consequently, this establishes suitable coefficient objects for constructing appropriate (co)homology theories. We develop an extension theory of modules over symmetric quandles and propose a generalized (co)homology theory for symmetric quandles with coefficients in a homogeneous Beck module, which also recovers the symmetric quandle (co)homology developed by Kamada and Oshiro [Trans. Amer. Math. Soc. (2010)]. Our constructions also apply to symmetric racks. We conclude by establishing an explicit isomorphism between the second cohomology of a symmetric quandle and the first cohomology of its associated group.
\end{abstract}
\maketitle

\section{Introduction}
Within the realm of oriented knot theory, racks and quandles \cite{Joyce1979, MR0672410}, and their cohomological ramifications  \cite{MR1990571, MR1364012, MR2255194}, have undoubtedly played a remarkable role. Their rich theories, combined with various refinements, have given rise to a plethora of connections spanning Hopf algebras \cite{MR1994219}, mapping class groups \cite{SDR2023, MR2699808}, Riemannian symmetric spaces \cite{MR0217742}, the Yang--Baxter equation \cite{MR3558231}, Yetter--Drinfeld modules \cite{MR1994219},  and contact geometry \cite{KSS2023}, to name a few.
\par

In knot theory, the existence of an orientation often becomes a crucial prerequisite, particularly in scenarios where one intends to work with the quandle cocycle invariants. The challenge of overcoming the need for an orientation was resolved by Kamada and Oshiro \cite{MR2371714, MR2657689}, who introduced symmetric quandles and their homology groups.  In these works, they defined a quandle cocycle invariant $\Phi_{\theta}$ for an unoriented link with respect to a given symmetric quandle 2-cocycle $\theta$. They further showed that $\Phi_{\theta}$ agrees with the quandle cocycle invariant $\Phi^{\mathrm{ori}}_{\theta}$ of an oriented link when the orientation is  arbitrarily assigned. Consequently, when the quandle 2-cocycle $\theta$ is cohomologous to a symmetric quandle 2-cocycle, the invariant $\Phi^{\mathrm{ori}}_{\theta}$ does not depend on the orientation of the link.  An analogous invariant has been defined for unoriented surface links via symmetric quandle 3-cocycles, and similar results have ensued. One of the remarkable consequences of this is enabling us to estimate the minimal triple point numbers of non-orientable surface links.
\par

Using certain symmetric quandles called 4-fold symmetric quandles, an invariant of 3-manifolds has been defined in \cite{MR2945646, MR2821435}. Furthermore, it has been shown that the Chern--Simons invariant of closed 3-manifolds can be regarded as a quandle cocycle invariant. In \cite{MR2890467}, symmetric quandles have been used to define invariants for spatial graphs and handlebody-links. The aforementioned coupled with further modifications (see, for example \cite{MR4476068, MR2629767, MR4594919}) have justified the relevance of symmetric quandles to low dimensional topology.
\par

Inspired by \cite{MR1994219, MR2155522}, we explore these objects from a categorical perspective enabling us to take a different approach than the already existing ones. As a consequence, we are able to completely determine the Beck modules in the category of symmetric quandles. Introduced by Beck in \cite{MR2616383}, given a category $\mathbf{C}$ and an object $X$ in $\mathbf{C}$, a Beck module over $X$ is an abelian group object in the slice category $\mathbf{C}|_X.$ Beck modules provide a very general notion of a coefficient module for an associated (co)homology theory. To exemplify their strength, we note that they simultaneously generalise the notion of coefficient modules from group cohomology, Lie algebra cohomology as well as Hochschild cohomology of associative algebras. Once the Beck modules in a category have been identified, it is natural to ponder over a corresponding (co)homology theory. We effectively formulate such a (co)homology theory for symmetric quandles with homogeneous coefficients. We also explore the related extension theory and establish connections with low dimensional group cohomology of associated groups. Similar constructions and results work in the category of racks as well.
\par 

 It is noteworthy to mention that this generalized (co)homology theory for symmetric quandles with homogeneous coefficients can be conceived in a manner reminiscent of group and Hochschild cohomology in which the coefficient objects admit actions. The significance of this (co)homology theory lies in its potential to yield invariants (for example, for spatial graphs and low dimensional manifolds) that are more comprehensive and capable of capturing additional information compared to the original (co)homology, as initially defined by Kamada and Oshiro \cite{MR2657689}, and which is a specific case where the action is trivial.

\par
The paper is organised as follows. In Section \ref{section prelim}, we review some basic definitions and key concepts from the theory of symmetric racks and symmetric quandles that will be used in the subsequent sections. Section \ref{modules over symmetric racks} introduces modules over symmetric racks and symmetric quandles as trunk maps from certain associated trunks to the category of abelian groups, along with some examples. In Section \ref{section category of modules}, we describe abelian group objects in the slice category over a fixed symmetric rack. This is achieved by first defining the semi-direct product of a symmetric rack and its module. Theorem \ref{equivalence of categories for symmetric rack modules} proves that the category of modules over a symmetric rack $(X,\rho_X)$ is equivalent to the category of abelian group objects in the slice category over $(X,\rho_X)$. An analogous result for symmetric quandles is deduced in Theorem \ref{equivalence of categories for symmetric quandle modules}. In Section \ref{section abelian extensions of symmetric racks},  we define extensions of symmetric racks and symmetric quandles. Theorem \ref{theorem ext cohom} establishes that the set of equivalence classes of extensions of a symmetric rack $(X,\rho_X)$ by an $(X,\rho_X)$-module $\mathscr{F}$ are in bijective correspondence with an abelian group $\mathcal{H}^2_{\mathrm{SR}}(X,\mathscr{F})$ defined via extensions. An analogous result for symmetric quandles is proved in Theorem \ref{theorem ext cohom quandle}. In Section \ref{section generalized cohomology}, after defining the suitable symmetric algebra, we introduce a generalized (co)homology theory for symmetric racks and symmetric quandles. This theory recovers the symmetric quandle (co)homology developed by Kamada and Oshiro \cite{MR2657689}. Finally, in Section \ref{relation cohom symm rack and group}, we establish an isomorphism between the second cohomology of a symmetric rack and the first cohomology of its associated group when the coefficients are trivial homogeneous modules.
\medskip

%%%%%%%%%%%%%%%%%%%%%%%%%%%%%%%%%%%%%%%%%%%%%%%%
%End of Section
%%%%%%%%%%%%%%%%%%%%%%%%%%%%%%%%%%%%%%%%%%%%%%%%

\section{Preliminaries}\label{section prelim}
In this section, we review some basic definitions and key concepts required for what follows.

\subsection{Symmetric racks and symmetric quandles}   
We begin with some basic ideas introduced in \cite{MR2657689}. 

\begin{definition}
Let $(X,*)$ be a rack (respectively quandle). A map $\rho_X:X \rightarrow X$ is called a \textit{good involution} if 
\begin{enumerate}
	\item[(S1)] $\rho_X$ is an involution, $\ie$, $\rho_X^2=\id_X$,
	\item[(S2)] $\rho_X(x \ast y)= \rho_X(x) \ast y$,
	\item[(S3)] $x \ast \rho_X(y) = x \ast^{-1}y$,
\end{enumerate}
for all $x, y \in X$.  The pair $(X, \rho_X)$ is called a \textit{symmetric rack (respectively symmetric quandle)}.
\end{definition}

We refer the reader to \cite{MR2371714,MR3363811, MR2657689,MR4594919} for more details on these structures and their applications in knot theory.

\begin{example}
The following are some elementary examples:
\begin{enumerate}
\item If $X$ is a trivial quandle, then every involution of $X$ is a good involution.
\item If $X= \Conj(G)$ is the conjugation quandle of a group $G$, then $\rho_X:X \rightarrow X$  given by $\rho_X(x)=x^{-1}$ is a good involution.
\item If $X= \Core(G)$ is the core quandle of a group $G$, then the identity map $\id:X \rightarrow X$ is a good involution. Also, if $z \in G$ is a central element of order two, then $\rho_X:X \rightarrow X$ given by $\rho(y)=y z$ is another good involution of $X$.
	\end{enumerate}
\end{example}	

\begin{definition}
	A \textit{symmetric rack (respectively quandle) homomorphism} is a map $f:(X,\rho_X) \rightarrow (Y, \rho_Y)$ such that 
	$$f(x \ast y)=f(x) \ast f(y) \quad \textrm{and} \quad f(\rho_X(x))=\rho_Y(f(x))$$ for all $x,y \in X$. A \textit{symmetric rack (respectively quandle) isomorphism} is a symmetric rack (respectively quandle) homomorphism which is also a bijection.
\end{definition}

Analogous to the associated group of a rack (respectively quandle), Kamada and Oshiro defined the associated group of a symmetric rack (respectively quandle) \cite[Section 4]{MR2657689}.

\begin{definition}
Let $(X, \rho_X)$ be a symmetric rack (respectively quandle). Then the group
	$$G_{(X,\rho_X)}=\langle e_x, x \in X \mid e_{x \ast y}=e_y^{-1}e_x e_y, ~e_{\rho_X(x)}=e_x^{-1} \text{ for }x,y \in X \rangle$$
is called the {\it associated group} of $(X, \rho_X)$.
\end{definition}

\begin{example} \cite[Example 2.5]{MR2657689}
Let $(X, \rho_X)$ be the fundamental symmetric quandle of an unknot in $\mathbb{R}^3$ or an unknotted 2-sphere in $\mathbb{R}^4$. To be precise, $X = \{x_1, x_2\}$ and $\rho_X(x_1) = x_2$.  Then the associated group $G_{(X,\rho_X)}  \cong \mathbb{Z}$.
\end{example}
\medskip

Let  $\SR$ and $\SQ$ denote the categories whose constituent objects are the symmetric racks and symmetric quandles, respectively, with the morphisms as defined above. It is clear that finite direct products exist in both $\SR$ and $\SQ$. Further, equalisers exist in both $\SR$ and $\SQ$. These are just the set-theoretical equalisers with the induced $\ast$ and $\rho_X$. Since, both $\SR$ and $\SQ$  have all binary products and equalisers, they admit all finite limits. \cite[Proposition 2.8.2]{MR1291599}.
\par 

Let $(X,\rho_X)$ be a  fixed symmetric rack and $\SR|_{(X,\rho_X)}$ denote the slice category over $(X,\rho_X)$. More precisely, the objects in $\SR|_{(X,\rho_X)}$ are the symmetric rack homomorphisms $f:(Y,\rho_Y) \rightarrow (X,\rho_X)$, and a morphism from $f:(Y,\rho_Y) \rightarrow (X,\rho_X)$ to $g:(Z,\rho_Z) \rightarrow (X,\rho_X)$ is a symmetric rack homomorphism $h:(Y,\rho_Y) \rightarrow (Z,\rho_Z)$ such that $g \,h=f$. The slice category over a symmetric quandle is defined analogously.
\par 

Since $\SR$ and $\SQ$ have all finite limits, it follows that $\SR|_{(X,\rho_X)}$ and $\SQ|_{(X,\rho_X)}$ also have finite limits \cite[using Proposition 2.8.2]{MR1291599}.
In particular, the product of $f:(Y,\rho_Y) \rightarrow (X,\rho_X)$ and $g:(Z,\rho_Z) \rightarrow (X,\rho_X)$ in $\SR|_{(X,\rho_X)}$ is the pullback $$h:(Y \times_X Z, \,\rho_{Y \times_X Z}) \rightarrow (X,\rho_X)$$ of $f$ and $g$, where $Y \times_X Z=\{(y,z) \in Y \times Z \mid f(y)=g(z)\}$ has the usual product rack structure, $\rho_{Y \times_X Z}(y,z)=(\rho_Y(y),\rho_Z(z))$ and $h(y, z)= f(y)=g(z)$ for all $(y,z) \in Y \times_X Z$.  The same construction works in $\SQ$. Thus, we can consider group objects in $\SR|_{(X,\rho_X)}$ and $\SQ|_{(X,\rho_X)}$. 
\medskip

\subsection{Trunks}
Trunks were first introduced and studied by Fenn, Rourke, and Sanderson in \cite{MR1364012}. A \textit{trunk} $\T$ is an object analogous to a category which consists of a class of objects and a set $\Hom_{\T}(A,B)$ of morphisms along with some commutative squares
\begin{center}
	\begin{tikzcd}[column sep=3em]
		A \arrow[r, "f"] \arrow[d, "g"] & B \arrow[d, "h"] \\
		C \arrow[r, "i"] & D,
	\end{tikzcd}
\end{center}
called {\it preferred squares}. Given two trunks $\Sa$ and $\T$, a \textit{trunk map} $F:\Sa \rightarrow \T$ is a map which assigns to each object $A$ of $\Sa$, an object $F(A)$ of $\T$, and to every morphism $f:A \rightarrow B$ of $\Sa$, a morphism $F(f):F(A) \rightarrow F(B)$ of $\T$ such that a preferred square
\begin{center}
	\begin{tikzcd}[column sep=3em]
		A \arrow[r, "f"] \arrow[d, "g"] & B \arrow[d, "h"] \\
		C \arrow[r, "i"] & D,
	\end{tikzcd}
\end{center}
is mapped to a preferred square
\begin{center}
	\begin{tikzcd}[column sep=3em]
		F(A) \arrow[r, "F(f)"] \arrow[d, "F(g)"] & F(B) \arrow[d, "F(h)"] \\
		F(C) \arrow[r, "F(i)"] & F(D).
	\end{tikzcd}
\end{center}

For example, for any category $\C$, we have a well-defined trunk $T(\C)$, which has the same objects and morphisms as $\C$, and whose preferred squares are the commutative diagrams in $\C$. In particular, we denote the trunk associated to the category of abelian groups by $\Ab$.	
\medskip

%%%%%%%%%%%%%%%%%%%%%%%%%%%%%%%%%%%%%%%%%%%%%%%%
%End of Section
%%%%%%%%%%%%%%%%%%%%%%%%%%%%%%%%%%%%%%%%%%%%%%%%

\section{Modules over symmetric racks and symmetric quandles}\label{modules over symmetric racks}
Given a symmetric rack (respectively quandle) $(X,\rho_X)$, we define a trunk $\T(X, \rho_X)$ as follows: 

\begin{itemize}
	\item  for each $x \in X$, $\T(X, \rho_X)$ has precisely one object,
	\item for each $x \in X$, there are morphisms $\gamma_x:x \rightarrow \rho_X(x)$ and $\id_x: x \rightarrow x$,
	\item for each ordered pair $(x,y)$ of elements of $X$, there are morphisms $\alpha_{x,y}:x \rightarrow x \ast y$ and $\beta_{x,y}:y \rightarrow x *y$ such that the following squares are preferred squares for all $x,y,z \in X$:
	\begin{center}
		\begin{tikzcd}[row sep=3em, column sep=4em]
			x \arrow[r, "\alpha_{x,y}", shorten >=2pt, shorten <=2pt] \arrow[d, "\alpha_{x,z}", shorten >=0pt, shorten <=0pt] & x \ast y \arrow[d, "\alpha_{x \ast y,z}", shorten >=0pt, shorten <=0pt] \\
			x \ast z \arrow[r, "\alpha_{x\ast z, y \ast z}"'] & (x\ast y) \ast z
		\end{tikzcd}
		\hspace{2cm}
		\begin{tikzcd}[row sep=3em, column sep=4em]
			y \arrow[r, "\beta_{x,y}", shorten >=2pt, shorten <=2pt] \arrow[d, "\alpha_{y,z}"', shorten >=0pt, shorten <=0pt] & x \ast y \arrow[d, "\alpha_{x \ast y,z}", shorten >=0pt, shorten <=0pt] \\
			y \ast z \arrow[r,"\beta_{x\ast z, y \ast z}"',shorten >=-4pt, shorten <=-4pt] & 
			(x \ast y)\ast z
		\end{tikzcd}
	\end{center}
	
	\begin{center}
		\begin{tikzcd}[row sep=3em, column sep=5em]
			x \arrow[r, "\alpha_{x,y}", shorten >=2pt, shorten <=2pt] \arrow[d, "\gamma_x", shorten >=0pt, shorten <=0pt] & x \ast y \arrow[d, "\gamma_{x \ast y}", shorten >=0pt, shorten <=0pt] \\
			\rho_X(x) \arrow[r, "\alpha_{\rho_X(x), y}"'] & 
			\rho_X(x) \ast y
		\end{tikzcd}
		\hspace{2cm}
		\begin{tikzcd}[row sep=3em, column sep=5em]
			x \arrow[r, "\beta_{x,y}", shorten >=2pt, shorten <=2pt] \arrow[d, "\id_x"', shorten >=0pt, shorten <=0pt] & x \ast y \arrow[d, "\gamma_{x \ast y}", shorten >=0pt, shorten <=0pt] \\
			\rho_X(x) \arrow[r, "\beta_{\rho_X(x), y}"'] & 
			\rho_X(x) \ast y
		\end{tikzcd}
	\end{center}
	
	\begin{center}
		\hspace{0.8cm}
		\begin{tikzcd}[row sep=3em, column sep=6em]
			x \arrow[r, "\gamma_x", shorten >=2pt, shorten <=2pt] \arrow[d, "\id_x", shorten >=0pt, shorten <=0pt] & \rho_X(x) \arrow[d, "\gamma_{\rho_X(x)}", shorten >=0pt, shorten <=0pt] \\
			x \arrow[r, "\id_x"'] & 
			x
		\end{tikzcd}
		\hspace{2.2cm}
		\begin{tikzcd}[row sep=3em, column sep=5.5em]
			x \arrow[r, "\alpha_{x,\rho_X(y)}", shorten >=0pt, shorten <=0pt] \arrow[d, "\id_x"', shorten >=0pt, shorten <=0pt] & x \ast \rho_X(y) \arrow[d, "\alpha_{x \ast \rho_X(y),y}", shorten >=0pt, shorten <=0pt] \\
			x \arrow[r, "\id_x"'] & 
			x
		\end{tikzcd}
	\end{center}
\end{itemize}

Thus, a trunk map $A:\T(X, \rho_X) \rightarrow \Ab$ yields abelian groups $A_x$ and group homomorphisms $\phi_{x,y}:A_x \rightarrow A_{x \ast y}$, $\psi_{x,y}:A_y \rightarrow A_{x \ast y}$ and $\eta_x:A_x \rightarrow A_{\rho_X(x)}$ such that
\begin{enumerate}[(M1)]
	\item \label{M1} $\phi_{x \ast y,z}\phi_{x,y}=\phi_{x \ast z,y \ast z}\phi_{x,z}$,
	\item \label{M2} $\phi_{x \ast y,z}\psi_{x,y}=\psi_{x \ast z,y \ast z}\phi_{y,z}$,
	\item \label{M3} $\eta_{\rho_X(x)} \eta_x = \id$,
	\item \label{M4} $\eta_{x \ast y} \phi_{x,y}= \phi_{\rho_X(x),y}\eta_x$,
	\item \label{M5} $\psi_{\rho_X(x),y}=\eta_{x \ast y}\psi_{x,y}$,
	\item \label{M6}$\phi_{x \ast^{-1}y,y}\phi_{x, \rho_X(y)}=\id$,
\end{enumerate}
 for all $x,y,z \in X$. We denote such a trunk map by $\mathscr{F}=(A,\phi,\psi,\eta)$.

\begin{definition}
	Let $(X,\rho_X)$ be a symmetric rack. Then an {\it $(X,\rho_X)$-module} is a trunk map $\mathscr{F}=(A,\phi,\psi,\eta): T(X,\rho_X)\rightarrow \Ab$ such that $\phi_{x,y}: A_{x}\rightarrow A_{x\ast y}$ is an isomorphism and 
	\begin{enumerate}
		\item[(M7)] \label{M7} $\psi_{x\ast y,z}(a)= \phi_{x\ast z,y\ast z}\psi_{x,z}(a)+\psi_{x\ast z,y\ast z}\psi_{y,z}(a)$,
		\item[(M8)] \label{M8} $\phi_{x\ast^{-1} y,y}\psi_{x,\rho_X(y)}(\eta_{y}(b))=-\psi_{x\ast \rho_X(y),y}(b)$,
	\end{enumerate}
	hold for all $a \in A_z$, $b \in A_y$ and $x,y, z \in X$. 
	\par
	
	If $(X,\rho_X)$ is a symmetric quandle, then we desire that the trunk map $\mathscr{F}$ additionally satisfies the condition
	\begin{enumerate}\label{M9}
		\item [(M9)] $\phi_{x,x}(a)+\psi_{x,x}(a)=a$
	\end{enumerate} 
	for all $a\in A_x$ and $x \in X.$
\end{definition}

\begin{definition}
Let $(X,\rho_X)$ be a symmetric rack (respectively quandle). An $(X,\rho_X)$-module  $\mathscr{F}=(A,\phi,\psi,\eta)$ is called \textit{homogeneous} if the constituent groups are all isomorphic, that is, $A_x \cong A_y$ for all $x,y \in X$.
\end{definition}		

Before proceeding further, let us consider some examples of modules.

\begin{example}
Let $(X,\rho_X)$ be a symmetric rack or a symmetric quandle. 
\begin{enumerate}
\item Any abelian group $A$ can be considered as an $(X,\rho_X)$-module by taking $A_x=A$ $\phi_{x,y}=\id_A$, $\psi_{x,y}=0$ and $\eta_x=-\id_A$ for all $x,y \in X$, which we call a \textit{trivial homogeneous module}.
\item \label{example-module} Consider a family of abelian groups $\{ G_{i}\}_{i\in I}$, where $I$ is an indexing set. Let $\alpha_i, \beta_i \in \Aut(G_i)$ such that  $\alpha_{i}^2=\id_{G_i}=\beta_{i}^2$ and $\alpha_{i}\beta_{i}=\beta_{i}\alpha_{i}$ for all $i \in I$. Then, $\mathscr{F}=(A,\phi,\psi,\eta)$ is an $(X,\rho_X)$-module, where
$A_x = \prod_{i\in I} G_i $, $\phi_{x,y}= \prod_{i \in I} \alpha_i$,  $\psi_{x,y} = 0$ and $\eta_{x}=\prod_{i \in I} \beta_i$ for all $x \in X$.
\item  Let $A$ an abelian group whose each element has order 2 or 4. Then, $\mathscr{F}=(A,\phi,\psi,\eta)$ is an $(X,\rho_X)$-module, where $A_x = A$, $\phi_{x,y}(a)=-a$, $\psi_{x,y}(a) = 2a$ and $\eta_{x}(a)=-a$ for all $x \in X$ and $a \in A$.
\item By \cite[Section 4]{MR2657689}, we have a well-defined right action of $G_{(X,\rho_X)}$ on $(X,\rho_X)$, given by $$x \cdot e_y= x*y$$ for all $x,y \in G_{(X,\rho_X)}$.
	 Let $ [x]$ denote the orbit of $x$ under this action.  Then, we have an $(X,\rho_X)$-module $\mathscr{F}=(A,\phi,\psi,\eta)$ with $A_x=A_{\rho_X(x)}:=A_{[x]}$ and 
	\begin{equation*}
		\phi_{x,y}: A_{[x]} \rightarrow A_{[x\ast y]} \text{ defined by }
		a \mapsto f_{[x]}(a),
	\end{equation*}
	\begin{equation*}
		\psi_{x,y}: A_{[x]} \rightarrow A_{[x\ast y]} \text{ defined by }
		b \mapsto 0 \text{ and }
	\end{equation*}  
	\begin{equation*}
		\eta_x: A_{[x]} \rightarrow A_{[\rho_X(x)]} \text{ defined by }
		c \mapsto g_{[x]}(c),
	\end{equation*}
	where $f_{[x]}, g_{[x]}: A_{[x]} \to A_{[x]}$ are group homomorphisms such that  $f_{[x]}^2=\id=g_{[x]}^2$ and $f_{[x]}g_{[x]}=g_{[x]}f_{[x]}$ for all $x \in X.$

\end{enumerate}
\end{example}

\begin{definition}\label{map}
Let $(X,\rho_X)$ be a symmetric rack (respectively quandle), and $\mathscr{F}=(A,\phi,\psi,\eta)$ and $\mathscr{F'}=(A',\phi',\psi',\eta')$ be $(X,\rho_X)$-modules. An \textit{$(X,\rho_X)$-map} is a natural transformation $f: \mathscr{F}\rightarrow \mathscr{F}'$ of trunk maps, that is, it is a collection  $f=\{f_x: A_x\rightarrow A'_x \mid  x \in X\}$ of group homomorphisms such that
	\begin{eqnarray}\label{X-map}
		\phi'_{x,y}f_x &=& f_{x\ast y} \phi_{x,y},\\
		\psi'_{x,y}f_y &=& f_{x\ast y}\psi_{x,y},\\
		\eta'_{x}f_x &=& f_{\rho_X(x)} \eta_{x},
	\end{eqnarray}
	for all $x,y \in X$.
	\par
	In addition, if each $f_x$ is an isomorphism of groups, then we say that $f: \mathscr{F}\rightarrow \mathscr{F}'$  is an isomorphism of  $(X,\rho_X)$-modules. 
\end{definition}

\begin{remark}
	If $(X, \rho_X)$ is a symmetric rack, then we can form the category $\mathbf{SRMod}_{(X, \rho_X)}$, whose objects are $(X,\rho_X)$-modules and whose morphisms are $(X,\rho_X)$-maps. Similarly, if $(X, \rho_X)$ is a symmetric quandle, then we can form the category $\mathbf{SQMod}_{(X, \rho_X)}$ of modules over $(X,\rho_X)$. Moreover, both $\mathbf{SRMod}_{(X, \rho_X)}$ and $\mathbf{SQMod}_{(X, \rho_X)}$) are abelian categories.
\end{remark}
\medskip

%%%%%%%%%%%%%%%%%%%%%%%%%%%%%%%%%%%%%%%%%%%%%%%%
%End of Section
%%%%%%%%%%%%%%%%%%%%%%%%%%%%%%%%%%%%%%%%%%%%%%%%

\section{Category of modules over symmetric racks and symmetric quandles}\label{section category of modules}
In this section, we describe the objects in the category $\Ab(\SR|_{(X,\rho_X)})$ of abelian group objects in the slice category $\SR|_{(X,\rho_X)}$ over a fixed symmetric rack $(X,\rho_X)$. Let $(X,\rho_X)$ be a symmetric rack and $\mathscr{F}=(A,\phi,\psi,\eta)$ an $(X,\rho_X)$-module. We define the \textit{semi-direct product} $\mathscr{F} \rtimes X$ of $\mathscr{F}$ and $X$ to be the set
$$ \big\{(a,x) \mid x\in X , a \in A_x \big\}$$ 
equipped with the binary operation
\begin{equation}
	(a,x) \tilde{\ast} (b,y):=\big(\phi_{x,y}(a)+\psi_{x,y}(b), x \ast y \big)
\end{equation}
and the map $\rho_{\mathscr{F} \rtimes X}:\mathscr{F} \rtimes X \rightarrow \mathscr{F} \rtimes X$ defined by 
$$\rho_{\mathscr{F} \rtimes X}\big((a,x) \big)= \big(\eta_x(a), \rho_X(x)\big)$$
for all $a \in A_x$,  $b \in A_y$ and $x,y \in X$.

\begin{proposition}\label{semi-direct product}
	Let $(X,\rho_X)$ be a symmetric rack and $\mathscr{F}=(A,\phi,\psi,\eta)$ an $(X,\rho_X)$-module. Then the semi-direct product $({\mathscr{F} \rtimes X},\rho_{\mathscr{F} \rtimes X})$ is a symmetric rack. Furthermore, if  $(X,\rho_X)$ is a symmetric quandle, then so is $({\mathscr{F} \rtimes X},\rho_{\mathscr{F} \rtimes X})$.
\end{proposition}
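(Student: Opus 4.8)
The plan is to check directly that $\tilde\ast$ satisfies the rack axioms and that $\rho_{\mathscr{F}\rtimes X}$ is a good involution, reading off each required identity from the module axioms. A useful preliminary observation is that in every computation the second coordinate is dictated purely by the rack (respectively quandle) structure of $X$, so that half of each axiom is automatic; the genuine content always lies in the first coordinate, which is an equation in the abelian groups $A_x$.

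First I would establish that $(\mathscr{F}\rtimes X, \tilde\ast)$ is a rack. For right-invertibility, fix $(b,y)$ and use that each $\phi_{x,y}$ is an isomorphism (part of the definition of a module), together with invertibility of right multiplication by $y$ in $X$, to write down the explicit inverse $(c,w)\mapsto\big(\phi_{w\ast^{-1}y,\,y}^{-1}(c-\psi_{w\ast^{-1}y,\,y}(b)),\ w\ast^{-1}y\big)$ and to confirm that it is a two-sided inverse. For self-distributivity I would expand both $\big((a,x)\tilde\ast(b,y)\big)\tilde\ast(c,z)$ and $\big((a,x)\tilde\ast(c,z)\big)\tilde\ast\big((b,y)\tilde\ast(c,z)\big)$; the first coordinate separates into the contributions of $a$, $b$, and $c$, and these match term by term via (M1), (M2), and (M7) respectively.

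Next I would verify the good-involution axioms (S1)--(S3) for $\rho_{\mathscr{F}\rtimes X}$. Axiom (S1) reduces on first coordinates to $\eta_{\rho_X(x)}\eta_x=\id$, i.e.\ (M3), and (S2) splits into the $a$- and $b$-terms, settled by (M4) and (M5). I expect (S3) to be the main obstacle: here I would first invoke (S3) for $X$ to identify both second coordinates as $x\ast^{-1}y$, then use (M6), which says $\phi_{x\ast^{-1}y,\,y}\,\phi_{x,\rho_X(y)}=\id$, to recognise that $\phi_{x\ast^{-1}y,\,y}^{-1}=\phi_{x,\rho_X(y)}$ and thereby rewrite the first coordinate of $(a,x)\tilde\ast^{-1}(b,y)$. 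After cancelling the common term $\phi_{x,\rho_X(y)}(a)$, the remaining equality is exactly (M8) transported along the isomorphism $\phi_{x,\rho_X(y)}$; this is where the $\psi$--$\eta$ interaction is consumed and where the bookkeeping is most delicate.

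Finally, for the quandle case I would add only idempotency: $(a,x)\tilde\ast(a,x)=\big(\phi_{x,x}(a)+\psi_{x,x}(a),\,x\ast x\big)$, whose second coordinate is $x$ because $X$ is a quandle and whose first coordinate is $a$ by (M9). The sole conceptual choice in the whole argument is fixing the correct formula for $\tilde\ast^{-1}$ so that (S3) aligns with (M6) and (M8); once that is pinned down, everything else is a matter of careful indexing.
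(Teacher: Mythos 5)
Your proposal is correct and follows essentially the same route as the paper: the paper likewise verifies (S1) via (M3), (S2) via (M4)--(M5), (S3) via (M6) and (M8) after using the good involution on $X$ to match second coordinates, and idempotency via (M9) in the quandle case. The only difference is that the paper does not reprove the underlying rack structure of $\mathscr{F}\rtimes X$ but cites Jackson's result \cite[Proposition 2.1]{MR2155522}, whereas you verify right-invertibility and self-distributivity directly from (M1), (M2), (M7) --- a harmless, slightly more self-contained elaboration of the same argument.
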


\begin{proof}
	Note that ${\mathscr{F} \rtimes X}$ is a rack, as proved in \cite[Proposition 2.1]{MR2155522}. It remains to check that $\rho_{\mathscr{F} \rtimes X}$ is a good involution.
	\begin{itemize}
		\item For (S1), we see that $\rho_{\mathscr{F} \rtimes X}^2(a,x)= \rho_{\mathscr{F} \rtimes X}(\eta_x(a), \rho_X(x))=(\eta_{\rho_X(x)}(\eta_x(a)), \rho_X^2(x))=(a,x)$, where the last equality follows from the $(X,\rho_X)$-module axiom \hyperref[M3]{(M3)}.
		\item For (S2), we have
		\begin{eqnarray*}
			\rho_{\mathscr{F} \rtimes X}\big((a,x)\big) \tilde{\ast} (b,y)&=&(\eta_x(a), \rho_X(x)) \tilde{\ast}(b,y)\\
			&=&(\phi_{\rho_X(x),y}(\eta_x(a))+\psi_{\rho_X(x),y}(b), \rho_X(x) \ast y)\\
			&=& (\eta_{x \ast y}(\phi_{x,y}(a)+\psi_{x,y}(b)), \rho_X(x \ast y)),\\
			&& \textrm{by $(X, \rho_X)$-module axioms \hyperref[M4]{(M4)} and \hyperref[M5]{(M5)}, and $\rho_X$ being a good involution}\\
			&=& \rho_{\mathscr{F} \rtimes X}(\phi_{x,y}(a)+\psi_{x,y}(b), x \ast y)\\
			&=& \rho_{\mathscr{F} \rtimes X} \big((a,x) \tilde{\ast}(b,y) \big).
		\end{eqnarray*}
		\item For (S3), we have 
		\begin{eqnarray*}
			(a,x) \tilde{\ast} \rho_{\mathscr{F} \rtimes X} \big((b,y)\big) &=& (a,x) \tilde{\ast} (\eta_y(b), \rho_X(y))\\
			&=& (\phi_{x, \rho_X(y)}(a)+\psi_{x,\rho_X(y)}(\eta_y(b)), x \ast \rho_X(y)),\\
			&& \textrm{by $(X, \rho_X)$-module axioms \hyperref[M6]{(M6)} and \hyperref[M8]{(M8)}, and $\rho_X$ being a good involution}\\
			&=& (\phi_{x \ast^{-1}y,y}^{-1}(a-\psi_{x \ast^{-1}y,y}(b)), x \ast^{-1}y)\\
			&=& (a,x)\tilde{\ast}^{-1}(b,y).
		\end{eqnarray*}
	\end{itemize}
	Thus, $\rho_{\mathscr{F} \rtimes X}$ is a good involution, and hence $({\mathscr{F} \rtimes X},\rho_{\mathscr{F} \rtimes X})$ is a symmetric rack. Finally, suppose that $(X,\rho_X)$ is a symmetric quandle. Then, we have the additional condition $\phi_{x,x}(a)+\psi_{x,x}(a)=a$ for all $x\in X$ and $a\in A_x$. Thus, we obtain
$$(a,x) \tilde{\ast} (a,x)=\big(\phi_{x,x}(a)+\psi_{x,x}(a), x \big)= (a, x)$$
for all $x\in X$ and $a\in A_x$, and hence $({\mathscr{F} \rtimes X},\rho_{\mathscr{F} \rtimes X})$ is a symmetric quandle.
\end{proof}

\begin{proposition}\label{abelian-group-object}
Let $(X, \rho_X)$ be a symmetric rack and $\mathscr{F}=(A,\phi,\psi,\eta)$ an  $(X, \rho_X)$-module. Then there exists an abelian group object $p:({\mathscr{F} \rtimes X},\rho_{\mathscr{F} \rtimes X}) \rightarrow (X, \rho_X)$, which we denote by $\mathcal{T}(\mathscr{F})$, in the slice category over $(X, \rho_X)$. Further, let  $\mathscr{F'}=(A',\phi',\psi',\eta')$ be another $(X,\rho_X)$-module and $f:\mathscr{F}\rightarrow \mathscr{F'}$ an $(X, \rho_X)$-map. Then the map $\mathcal{T}(f): \mathscr{F}\rtimes X \rightarrow \mathscr{F'}\rtimes X$ defined by $\mathcal{T}(f)\big((a,x) \big)= (f_{x}(a),x)$ gives a slice morphism. Moreover, $\mathcal{T}: \mathbf{SRMod}_{(X, \rho_X)} \rightarrow \Ab(\SR|_{(X,\rho_X)})$ is a functor.
\end{proposition}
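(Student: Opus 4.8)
The plan is to exhibit explicitly the abelian group object structure on $p\colon({\mathscr{F}\rtimes X},\rho_{\mathscr{F}\rtimes X})\to(X,\rho_X)$ and to check that all structure maps are morphisms in $\SR|_{(X,\rho_X)}$, after which the group-object axioms will follow fiberwise. First I would take $p$ to be the projection $p(a,x)=x$, which is a symmetric rack homomorphism onto $(X,\rho_X)$ directly from the definitions of $\tilde{\ast}$ and $\rho_{\mathscr{F}\rtimes X}$, the domain being a symmetric rack by Proposition \ref{semi-direct product}. The product of this object with itself in the slice category is, as recalled in Section \ref{section prelim}, the pullback with underlying set $\{((a,x),(b,x)) : x\in X,\ a,b\in A_x\}$, carrying the operation $((a,x),(b,x))\,\tilde{\ast}\,((c,y),(d,y))=\big((\phi_{x,y}(a)+\psi_{x,y}(c),x\ast y),(\phi_{x,y}(b)+\psi_{x,y}(d),x\ast y)\big)$ and the componentwise involution. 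I would then define the structure maps fiberwise via the abelian group structure of each $A_x$: the multiplication $m\big(((a,x),(b,x))\big)=(a+b,x)$, the unit section $u(x)=(0_x,x)$ with $0_x\in A_x$ the identity, and the inverse $\iota(a,x)=(-a,x)$.

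The key step is to verify that $m$, $u$, and $\iota$ are symmetric rack homomorphisms commuting with $p$; this is where the additive structure of the module enters. For $m$, compatibility with $\tilde{\ast}$ reduces, after expanding both sides, to the identities $\phi_{x,y}(a+b)=\phi_{x,y}(a)+\phi_{x,y}(b)$ and $\psi_{x,y}(c+d)=\psi_{x,y}(c)+\psi_{x,y}(d)$, while compatibility with the involutions reduces to $\eta_x$ being additive, all of which hold because $\phi_{x,y},\psi_{x,y},\eta_x$ are group homomorphisms. The same additivity, applied to $0_x$ and to $-a$, shows that $u$ and $\iota$ are symmetric rack homomorphisms, and each map leaves the $X$-coordinate unchanged as required by $p$, so all three are slice morphisms. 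With the structure maps in hand, I would observe that the abelian group object axioms (associativity, commutativity, unit, inverse) are diagrammatic translations of the corresponding identities in the abelian groups $A_x$; since $m$, $u$, $\iota$ act within a single fiber in the group coordinate and preserve the $X$-coordinate, each axiom holds because it holds in every $A_x$. This yields the abelian group object $\mathcal{T}(\mathscr{F})$.

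For the functorial part, given an $(X,\rho_X)$-map $f\colon\mathscr{F}\to\mathscr{F'}$, I would check that $\mathcal{T}(f)(a,x)=(f_x(a),x)$ is a morphism of abelian group objects. That $\mathcal{T}(f)$ is a symmetric rack homomorphism compatible with the projections is exactly the three defining relations of an $(X,\rho_X)$-map in Definition \ref{map}: the relations $\phi'_{x,y}f_x=f_{x\ast y}\phi_{x,y}$ and $\psi'_{x,y}f_y=f_{x\ast y}\psi_{x,y}$ give compatibility with $\tilde{\ast}$, while $\eta'_x f_x=f_{\rho_X(x)}\eta_x$ gives compatibility with $\rho$. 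Compatibility with the group structure follows from each $f_x$ being a homomorphism, so $\mathcal{T}(f)$ commutes with $m$, $u$, and $\iota$. Finally $\mathcal{T}(\id_{\mathscr{F}})=\id$ and $\mathcal{T}(g\circ f)=\mathcal{T}(g)\circ\mathcal{T}(f)$ are immediate from $(g\circ f)_x=g_x\circ f_x$, establishing functoriality.

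I expect the main obstacle to be bookkeeping rather than conceptual: correctly transporting the pullback rack-and-involution structure of Section \ref{section prelim} to the product $\mathcal{T}(\mathscr{F})\times_{(X,\rho_X)}\mathcal{T}(\mathscr{F})$ and confirming that $m$ respects it. Once the additivity of $\phi,\psi,\eta$ is invoked there, every remaining verification is either a fiberwise abelian-group identity or a direct reading of the $(X,\rho_X)$-map axioms.
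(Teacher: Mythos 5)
Your proposal is correct and follows essentially the same route as the paper: both construct the projection $p$ on the semi-direct product of Proposition \ref{semi-direct product}, endow it with the fiberwise multiplication, zero-section, and inversion maps on the pullback, and then verify functoriality of $\mathcal{T}$ via the defining relations of an $(X,\rho_X)$-map. The only difference is one of exposition --- the paper defers the checks that the structure maps are symmetric rack homomorphisms, slice morphisms, and satisfy the group-object axioms to Jackson's Theorem 2.2 in \cite{MR2155522}, whereas you carry them out explicitly via the additivity of $\phi_{x,y}$, $\psi_{x,y}$, and $\eta_x$ --- which is a sound and, if anything, more self-contained rendering of the same argument.
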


\begin{proof}
Let $\mathscr{F}=(A,\phi,\psi,\eta)$ be an $(X,\rho_X)$-module. Then, we have the semi-direct product $({\mathscr{F} \rtimes X},\rho_{\mathscr{F} \rtimes X})$, which is a symmetric rack due to Proposition \ref{semi-direct product}. Let $p: \mathscr{F} \rtimes X \rightarrow X$ be the natural projection given as $p(a,x)= x$ for $a \in A_x$ and $x \in X$. Then $p$ is a homomorphism of symmetric racks. We claim that $p: \mathscr{F} \rtimes X \rightarrow X$ has the canonical structure of an abelian group object. Let $$\big((\mathscr{F} \rtimes X )\times_X (\mathscr{F} \rtimes X),\rho_{(\mathscr{F} \rtimes X )\times_X (\mathscr{F} \rtimes X)}\big) \rightarrow (X, \rho_X)$$ be the categorical product of $p: \mathscr{F} \rtimes X \rightarrow X$ with itself in the slice category over $(X,\rho_X)$. The symmetric rack structure is defined in the same way as in the cartesian product. We define the morphisms
 \begin{itemize}
\item $\mu:(\mathscr{F} \rtimes X )\times_X (\mathscr{F} \rtimes X) \rightarrow (\mathscr{F} \rtimes X) $ given by $((a_1,x),(a_2,x)) \mapsto (a_1+a_2,x)$,
\item $\sigma:X \rightarrow (\mathscr{F} \rtimes X)$ given by $x \mapsto (0,x)$,
\item $\nu:(\mathscr{F} \rtimes X) \rightarrow (\mathscr{F} \rtimes X)$ given by $(a,x) \mapsto (-a,x)$,
\end{itemize}
for all $a_1,a_2, a \in A_x$ and $x \in X$. It easy to check that $\mu$, $\sigma$ and $\nu$ are symmetric rack homomorphisms. Also, the fact that $\mu$, $\sigma$ and $\nu$ are slice morphisms and that they turn $p: \mathscr{F} \rtimes X \rightarrow X$ into an abelian group object follows directly (see also \cite[Theorem 2.2]{MR2155522}).
\par	

Let us define a functor $\mathcal{T}: \mathbf{SRMod}_{(X, \rho_X)} \rightarrow \Ab(\SR|_{(X,\rho_X)})$.  Given an $(X,\rho_X)$-module $\mathscr{F}=(A,\phi,\psi,\eta)$, let $\mathcal{T}(\mathscr{F})$ denote the object $p:\mathscr{F}\rtimes X\rightarrow X$. Let $\mathscr{F'}=(A',\phi',\psi',\eta')$ be another $(X,\rho_X)$-module and $f:\mathscr{F}\rightarrow \mathscr{F'}$ an $(X, \rho_X)$-map. Define $\mathcal{T}(f):\mathscr{F}\rtimes X \rightarrow \mathscr{F'}\rtimes X$ by $$T(f)(a,x)=(f_{x}(a),x).$$ Then, we have
			\begin{eqnarray*}
				\mathcal{T}(f) \big((a,x) \star (b,y) \big)&=&\mathcal{T}(f)\big(\phi_{x,y}(a)+\psi_{x,y}(b),x \ast y \big)\\
				&=& \big(f_{x \ast y}(\phi_{x,y}(a)+\psi_{x,y}(b)) ,x \ast y \big)\\
				&=&\big(\phi'_{x,y}f_x(a)+\psi'_{x,y}f_y(b),x \ast y \big), \quad \textrm{since $f$ is an $(X, \rho_X)$-map}\\
				&=& \mathcal{T}(f)\big((a,x)\big) \star' \mathcal{T}(f)\big((b,y)\big),
			\end{eqnarray*}
where $\star$ is the rack operation in $\mathscr{F}\rtimes X$ and $\star'$ is the rack operation in $\mathscr{F'}\rtimes X$. Further, since $\eta'_{x}f_x = f_{\rho_X(x)} \eta_{x}$, the  diagram
\begin{center}
\begin{tikzcd}[row sep=3em, column sep=4em]
\mathscr{F}\rtimes X \arrow[r, "\rho_{\mathscr{F}\rtimes X}"] \arrow[d, "\mathcal{T}(f)"] & \mathscr{F}\rtimes X \arrow[d, "\mathcal{T}(f)"] \\
\mathscr{F'}\rtimes X \arrow[r, "\rho_{\mathscr{F'}\rtimes X}"] & \mathscr{F'}\rtimes X
\end{tikzcd}
\end{center}
 commutes. Thus,  $T(f)$ is a homomorphism of symmetric racks. Finally, since the diagram
\begin{center}
\begin{tikzcd}
\mathscr{F}\rtimes X \arrow[r, "\mathcal{T}(f)"] \arrow[rd, "p"'] & \mathscr{F'}\rtimes X \arrow[d, "p'"] \\
& X
\end{tikzcd}
\end{center}
commutes, where $p'$ is the projection to the second coordinate, $\mathcal{T}(f)$ is a slice morphism. In fact, one can check that $\mathcal{T}$ is a functor (see \cite[Theorem 2.2]{MR2155522}).	
		\end{proof}
	
\begin{proposition}\label{module}
Let $(X, \rho_X)$ be a symmetric rack and $p:(Y,\rho_Y) \rightarrow (X, \rho_X)$ an abelian group object with the multiplication map $m$, the inverse map $i$, and the section $s$. Let $(X, *)$ and $(Y, \star)$ be the underlying racks. Then there exists an $(X,\rho_X)$-module $(R, \phi, \psi, \eta)$ given by an induced abelian group structure on the fibre $R_x:=p^{-1}(x)$ for each $x \in X$, where
		$$\phi_{x,y}:R_x \rightarrow R_{x \ast y} ~~ \text{is given by }~ u \mapsto u \star s(y),$$  
		$$\psi_{x,y}:R_y \rightarrow R_{x \ast y} ~~ \text{is given by }~ v \mapsto s(x) \star v$$ and 
		$$\eta_x: R_x \rightarrow R_{\rho_X(x)} ~~ \text{is given by }~ u \mapsto \rho_{Y}(u)$$
		for all $u\in R_x$, $v \in R_y$ and $x,y \in X$. Furthermore, the association gives a functor $\mathcal{S}: \Ab(\SR|_{(X,\rho_X)}) \to  \mathbf{SRMod}_{(X, \rho_X)}$. 
	\end{proposition}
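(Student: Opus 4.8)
The plan is to reverse the construction of Proposition \ref{abelian-group-object}, reading the module maps off the group-object structure and then verifying the axioms intrinsically. First I would record the structure carried by the fibres. Since $p\colon(Y,\rho_Y)\to(X,\rho_X)$ is an abelian group object, each fibre $R_x=p^{-1}(x)$ is an abelian group, with addition the restriction of $m$ to $R_x\times R_x$, zero element $s(x)$, and negation $i$; here I use that $m$, $s$, $i$ are morphisms in the slice category over $X$, so they respect fibres. The maps $m$, $s$, $i$ are moreover symmetric rack homomorphisms. The single most useful consequence is the \emph{interchange law}
\begin{equation}\label{interchange}
 (u_1\star v_1)+(u_2\star v_2)=(u_1+u_2)\star(v_1+v_2),
\end{equation}
valid whenever $u_1,u_2$ lie in a common fibre $R_{x'}$ and $v_1,v_2$ lie in a common fibre $R_{y'}$. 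This is precisely the statement that $m\colon Y\times_X Y\to Y$ is a rack homomorphism, applied to the componentwise rack structure on $Y\times_X Y$. I would also record that $s$ preserves $\star$ and $\rho$ (so $s(x*y)=s(x)\star s(y)$ and $s(\rho_X(x))=\rho_Y(s(x))$), and that $\rho_Y$ commutes with $m$ (so $\rho_Y(u_1+u_2)=\rho_Y(u_1)+\rho_Y(u_2)$), the latter because $m$ intertwines the good involutions.

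Next I would check that $\phi,\psi,\eta$ are well-defined group homomorphisms landing in the claimed fibres. Applying the symmetric rack homomorphism $p$ shows $u\star s(y)\in R_{x*y}$, $s(x)\star v\in R_{x*y}$ and $\rho_Y(u)\in R_{\rho_X(x)}$, so the targets are correct. That $\phi_{x,y}$ and $\psi_{x,y}$ are additive follows from \eqref{interchange} together with the fact that $s(y)$ (resp.\ $s(x)$) is the additive zero of its fibre: for instance $\phi_{x,y}(u_1+u_2)=(u_1+u_2)\star\bigl(s(y)+s(y)\bigr)=(u_1\star s(y))+(u_2\star s(y))$ by \eqref{interchange}. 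Additivity of $\eta_x$ is the identity $\rho_Y(u_1+u_2)=\rho_Y(u_1)+\rho_Y(u_2)$ noted above. Finally, $\phi_{x,y}$ is an isomorphism because right translation $w\mapsto w\star s(y)$ is a bijection of the rack $Y$ with inverse $w\mapsto w\star^{-1}s(y)$, and $p$ shows this pair restricts to mutually inverse bijections $R_x\rightleftarrows R_{x*y}$.

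I would then verify the module axioms, grouped by the tool they need. Axioms (M1), (M2) and (M6) are purely rack-theoretic: they follow from right self-distributivity of $\star$ and from $s$ preserving $\star$ and $\rho$ (e.g.\ (M6) uses $s(\rho_X(y))=\rho_Y(s(y))$ and $w\star\rho_Y(v)=w\star^{-1}v$). Axioms (M3), (M4), (M5) are involution-theoretic, following from (S1)--(S3) for $\rho_Y$ and from $s$ being symmetric. The essential content, and the step I expect to be the main obstacle, is (M7) and (M8), since these are the only axioms mixing the rack operation with the fibrewise addition and so genuinely require \eqref{interchange}. For (M7) I would expand both sides into expressions in $s(x),s(y),a$ using $s(x*y)=s(x)\star s(y)$, and apply \eqref{interchange} to the two summands of the right-hand side; in each summand exactly one factor is an $s$-value, hence an additive zero, so the sum collapses to $(s(x)\star a)\star(s(y)\star a)$, which equals $\psi_{x*y,z}(a)$ by self-distributivity. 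For (M8) I would compute the left-hand side as $(s(x)\star^{-1}b)\star s(y)$ and show that adding $(s(x)\star^{-1}s(y))\star b$ gives, via \eqref{interchange} together with $s(x)\star^{-1}s(y)=s(x*^{-1}y)$ being zero and the rack identity $(w\star^{-1}v)\star v=w$, the zero element $s(x)$ of $R_x$; this is exactly the assertion that the two terms are negatives of one another.

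Finally, for functoriality I would take a morphism $g$ of abelian group objects over $(X,\rho_X)$; as a slice morphism compatible with the group structures it restricts to group homomorphisms $g_x\colon R_x\to R'_x$, and because $g$ is a symmetric rack homomorphism commuting with the sections ($g\circ s=s'$) one checks that the collection $\mathcal{S}(g)=\{g_x\}$ intertwines $\phi,\psi,\eta$ with $\phi',\psi',\eta'$, so that it is an $(X,\rho_X)$-map. Preservation of identities and composition is immediate from $g\mapsto\{g_x\}$, which yields the functor $\mathcal{S}\colon\Ab(\SR|_{(X,\rho_X)})\to\mathbf{SRMod}_{(X,\rho_X)}$.
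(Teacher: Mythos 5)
Your proposal is correct, and at its core it is the same proof as the paper's: the same fibrewise group structure ($u+v=m(u,v)$, zero $s(x)$, negation $i$), the same formulas for $\phi,\psi,\eta$, and verification of the axioms from the fact that $m$, $s$, $i$ and $\rho_Y$ are morphisms of symmetric racks over $(X,\rho_X)$. The genuine difference is one of self-containedness. The paper delegates all the purely rack-theoretic content --- additivity of $\phi_{x,y}$ and $\psi_{x,y}$, axioms (M1), (M2), (M7), and the $\phi,\psi$ naturality identities in the functoriality step --- to \cite[Theorem 2.2]{MR2155522}, and only writes out the involution-related verifications (M3)--(M6), (M8) and the $\eta$-identities. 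You instead isolate the interchange law $(u_1\star v_1)+(u_2\star v_2)=(u_1+u_2)\star(v_1+v_2)$ as a stand-alone tool and derive everything from it; this is exactly the identity the paper itself deploys, written in $m$-notation on $Y\times_X Y$, in its computation of (M8), so your route buys a citation-free argument at the cost of redoing work available in the rack literature. Your treatments of the two ``mixed'' axioms are sound: for (M7) both summands collapse under the interchange law because $s(x)\star s(z)$ and $s(y)\star s(z)$ are the zeros of their fibres, leaving $(s(x)\star a)\star(s(y)\star a)=(s(x)\star s(y))\star a$ by self-distributivity, and for (M8) the sum of the two terms reduces via $s(x)\star^{-1}s(y)=s(x\ast^{-1}y)$ and $(w\star^{-1}v)\star v=w$ to the zero $s(x)$ of $R_x$, matching the paper's computation. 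Two further points in your favour: you explicitly check that $\phi_{x,y}$ is an isomorphism (inverse $w\mapsto w\star^{-1}s(y)$), a requirement of the definition of an $(X,\rho_X)$-module that the paper's proof leaves to the citation, and in the functoriality step you correctly identify that the needed identity $g\circ s=s'$ is the standard unit-preservation property of morphisms of group objects, which the paper also uses only implicitly.
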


\begin{proof}
Let $p:(Y,\rho_Y) \rightarrow (X, \rho_X)$ be an abelian group object in $\Ab(\SR|_{(X,\rho_X)})$. Let $R_x=p^{-1}(x)$ for each $x \in X$. Then an abelian group structure on $R_x$ is given by $$u+v:=m(u,v),$$ where $s(x)$ is the identity element in $R_x$ and $-u:=i(u)$ for all $u,v \in R_x$. To define an $(X, \rho_X)$-module structure, let us define
	$$\phi_{x,y}:R_x \rightarrow R_{x \ast y} ~~\text{given by } ~u \mapsto u \star s(y),$$  
	$$\psi_{x,y}:R_y \rightarrow R_{x \ast y} ~~\text{given by }~ v \mapsto s(x) \star v$$ and 
	$$\eta_x: R_x \rightarrow R_{\rho_X(x)} ~~\text{given by }~ u \mapsto \rho_{Y}(u)$$
	for all $u\in R_x$, $v \in R_y$ and $x,y \in X$.
It has been proved in  \cite[Theorem 2.2]{MR2155522} that $\phi_{x,y}$ and $\psi_{x,y}$ are group homomorphisms for all $x, y \in X$. For the case of $\eta_x$, if $u_1,u_2 \in R_x$, then we see that
	\begin{eqnarray*}
		\eta_x(u_1+u_2)&=& \rho_Y(m(u_1,u_2))\\
		&=& m(\rho_Y(u_1), \rho_Y(u_2)), \quad \textrm{since $m \,(\rho_Y \times \rho_Y)=\rho_Y \, m$}\\
		&=& \rho_Y(u_1)+\rho_Y(u_2)\\
		&=& \eta_x(u_1)+\eta_x(u_2).
	\end{eqnarray*}
Next, we need to check that $\phi_{x,y}$, $\psi_{x,y}$ and $\eta_x$ satisfy the $(X,\rho_X)$-module identities. Since \hyperref[M1]{(M1)}, \hyperref[M2]{(M2)} and \hyperref[M7]{(M7)} are proved in \cite[Theorem 2.2]{MR2155522}, we verify only the remaining identities.
\begin{itemize}
	\item Since $\rho_Y$ is a good involution, we have $$\eta_{\rho_X(x)} \eta_x(u)=\eta_{\rho_X(x)}(\rho_Y(u))=\rho_Y^2(u)=u,$$ and hence \hyperref[M3]{(M3)} holds.
	\item Again, since $\rho_Y$ is a good involution, we have $$\phi_{\rho_X(x),y}(\eta_x(u))=\phi_{\rho_X(x),y}(\rho_Y(u))=\rho_Y(u)\star s(y)=\rho_Y(u \star s(y))=\eta_{x \ast y}(u \star s(y))=\eta_{x \ast y} (\phi_{x,y}(u)),$$ and hence \hyperref[M4]{(M4)} holds.
	\item  Since $s$ is a homomorphism of symmetric racks, we see that $$\psi_{\rho_X(x),y}(v)= s(\rho_X(x))\star v=\rho_Y(s(x)) \star v=\rho_Y(s(x)\star v)=\eta_{x \ast y}(s(x) \star (v))=\eta_{x \ast y}(\psi_{x,y}(v)),$$ and hence \hyperref[M5]{(M5)} holds.
	\item Since $\rho_Y$ is a good involution and $s$ is a homomorphism of symmetric racks, we obtain
$$\phi_{x \ast^{-1}y,y}\phi_{x, \rho_X(y)}(u)=\phi_{x \ast^{-1},y}(u \star s(\rho_X(y)))=\phi_{x \ast^{-1},y}(u \star \rho_Y(s(y)))=(u \star \rho_Y(s(y)))\star s(y)=u,$$ and hence \hyperref[M6]{(M6)} holds.
	\item  Note that $\phi_{x\ast^{-1} y,y}\psi_{x,\rho_X(y)}(\eta_{y}(v))=\phi_{x\ast^{-1} y,y}\psi_{x,\rho_X(y)}(\rho_Y(v))=\phi_{x\ast^{-1} y,y}(s(x) \star \rho_Y(v))=(s(x) \star \rho_Y(v) )\star s(y)$ and
	$\psi_{x\ast \rho_X(y),y}(v)=s(x \ast \rho_X(y))\star v=(s(x) \star \rho_Y(s(y))) \star v$. Then, we have
	\begin{eqnarray*}
&& \phi_{x\ast^{-1} y,y}\psi_{x,\rho_X(y)}(\eta_{y}(v))+\psi_{x\ast \rho_X(y),y}(v)\\
&=& (s(x) \star \rho_Y(v)) \star s(y)+(s(x) \star \rho_Y(s(y))) \star v\\
&=&m \big((s(x) \star \rho_Y(v)) \star s(y), ~(s(x) \star \rho_Y(s(y))) \star v \big)\\
&=&m \big( \big((s(x), s(x)) \star (\rho_Y(v), \rho_Y(s(y))) \big) \star (s(y), v) \big), \quad \textrm{by definition of rack operation in $Y \times_X Y$}\\
&=& \big(m(s(x),s(x)) \star m \,(\rho_Y(v),\rho_Y(s(y))) \big)\star m(s(y),v), \quad \textrm{since $m$ is a homomorphism of racks}\\
&=& \big(s(x) \star \rho_Y \, m(v, s(y))) \star m(s(y),v), \quad \textrm{since $m$ is a homomorphism of symmetric racks}\\
&=& \big(s(x) \star \rho_Y \, m(s(y),v) )\star m(s(y),v), \quad \textrm{since $\rho_Y$ is a good involution}\\
&=& s(x),
	\end{eqnarray*}
\end{itemize}
and hence \hyperref[M8]{(M8)} holds. Hence, we have proved that $(R, \phi, \psi, \eta)$ is an $(X,\rho_X)$-module.
\par 

Next, we construct a functor $\mathcal{S}: \Ab(\SR|_{(X,\rho_X)}) \to  \mathbf{SRMod}_{(X, \rho_X)}$. Given an abelian group object $p:(Y,\rho_Y) \rightarrow (X, \rho_X)$, we set $\mathcal{S}(p) = (R,\phi,\psi,\eta)$, which is defined above. Let $f$ be a morphism from the abelian group object $p_1:(Y_1,\rho_{Y_1}) \rightarrow (X,\rho_X)$ to the abelian group object $p_2:(Y_2,\rho_{Y_2}) \rightarrow (X,\rho_X)$. Thus, if $m$ and $m'$ denote the corresponding multiplication maps, then $f \, m= m' \,(f, f)$. Further, $f:(Y_1,\rho_{Y_1}) \rightarrow (Y_2,\rho_{Y_2})$ is a homomorphism of symmetric racks with $p_2\,f=p_1$. Let $\mathscr{F}_1$ and $\mathscr{F}_2$ denote the $(X,\rho_X)$-modules as described above. Define $g:\mathscr{F}_1=(R_1,\phi,\psi,\eta) \rightarrow \mathscr{F}_2=(R_2,\phi',\psi',\eta')$ by setting
$$g_x:{(R_1)}_x \rightarrow {(R_2)}_x~~\text{as}~~g_x(u)= f(u)$$
for all $x\in X$ and $u \in {(R_1)}_x$. Since   $f \, m= m' \,(f, f)$, it follows that $g_x$ is an abelian group homomorphism for each $x \in X$. The identities  
$\phi'_{x,y}g_x = g_{x\ast y} \phi_{x,y}$ and  $\psi'_{x,y}g_y = g_{x\ast y}\psi_{x,y}$ are  proved in \cite[Theorem 2.2]{MR2155522}. Further, since
$$g_{\rho_X(x)} \eta_{x}(w)=g_{\rho_X(x)} (\rho_{Y_1}(w))=f(\rho_{Y_1}(w))=\rho_{Y_2}(f(w))=\rho_{Y_2}g_x(w)=\eta_x'(g_x(w))$$
for all $w\in (R_1)_x$, we have $\eta'_{x}g_x = g_{\rho_X(x)} \eta_{x}$. Hence, $g$ is an $(X,\rho_X)$-map. A routine check shows that $\mathcal{S}$ is a functor (see \cite[Thereom 2.2]{MR2155522}).
    \end{proof}

\begin{theorem}\label{equivalence of categories for symmetric rack modules}
Let $(X,\rho_X)$ be a symmetric rack. Then the category $\mathbf{SRMod}_{(X, \rho_X)}$ of $(X,\rho_X)$-modules is equivalent to the category $\Ab(\SR|_{(X,\rho_X)})$ of abelian group objects in the slice category over $(X,\rho_X)$.
\end{theorem}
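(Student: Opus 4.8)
The plan is to show that the two functors constructed above, namely $\mathcal{T}\colon \mathbf{SRMod}_{(X, \rho_X)} \to \Ab(\SR|_{(X,\rho_X)})$ from Proposition \ref{abelian-group-object} and $\mathcal{S}\colon \Ab(\SR|_{(X,\rho_X)}) \to \mathbf{SRMod}_{(X, \rho_X)}$ from Proposition \ref{module}, are mutually quasi-inverse. Thus it suffices to exhibit natural isomorphisms $\mathcal{S}\circ\mathcal{T}\cong \mathrm{Id}$ and $\mathcal{T}\circ\mathcal{S}\cong \mathrm{Id}$.

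For the first composite, I would start with a module $\mathscr{F}=(A,\phi,\psi,\eta)$ and unwind $\mathcal{S}(\mathcal{T}(\mathscr{F}))$. The underlying object of $\mathcal{T}(\mathscr{F})$ is the projection $p\colon \mathscr{F}\rtimes X\to X$ with section $s(x)=(0,x)$, so its fibre over $x$ is $p^{-1}(x)=\{(a,x)\mid a\in A_x\}$, which is carried isomorphically onto $A_x$ by $(a,x)\mapsto a$. Under this identification, the structure maps produced by $\mathcal{S}$ become $\phi'_{x,y}(a)=(a,x)\,\tilde{\ast}\,(0,y)=\phi_{x,y}(a)$, $\psi'_{x,y}(b)=(0,x)\,\tilde{\ast}\,(b,y)=\psi_{x,y}(b)$ and $\eta'_x(a)=\rho_{\mathscr{F}\rtimes X}(a,x)=\eta_x(a)$, so the module $\mathscr{F}$ is recovered exactly. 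Checking that the collection $\{(a,x)\mapsto a\}$ is natural in $\mathscr{F}$ then gives $\mathcal{S}\circ\mathcal{T}\cong\mathrm{Id}$.

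For the second composite, start with an abelian group object $p\colon (Y,\rho_Y)\to(X,\rho_X)$ having multiplication $m$ and section $s$, and let $(R,\phi,\psi,\eta)=\mathcal{S}(p)$ with $R_x=p^{-1}(x)$. The object $\mathcal{T}(\mathcal{S}(p))$ has underlying set $R\rtimes X=\{(u,x)\mid u\in p^{-1}(x)\}$, and there is a canonical bijection $\Theta\colon R\rtimes X\to Y$, $(u,x)\mapsto u$, since $x=p(u)$ is determined by $u$. The crucial point is that $\Theta$ is an isomorphism of symmetric racks over $X$ compatible with the group object structures; this reduces to verifying the identity $u\star v=m\big(u\star s(y),\,s(x)\star v\big)$ whenever $p(u)=x$ and $p(v)=y$.

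The hard part will be establishing this identity, and here I would use that $m\colon (Y\times_X Y,\rho_{Y\times_X Y})\to(Y,\rho_Y)$ is a rack homomorphism. Since $s(x)$ and $s(y)$ are the identity elements of the fibre groups $R_x$ and $R_y$, one has $u=m(u,s(x))$ and $v=m(s(y),v)$, so that
$$u\star v=m(u,s(x))\star m(s(y),v)=m\big((u,s(x))\star(s(y),v)\big)=m\big(u\star s(y),\,s(x)\star v\big),$$
the middle equality being exactly the statement that $m$ preserves the rack operation of $Y\times_X Y$. This shows $\Theta$ sends $(u,x)\,\tilde{\ast}\,(v,y)=(\phi_{x,y}(u)+\psi_{x,y}(v),x\ast y)$ to $u\star v$; compatibility with the involutions follows from $\eta_x=\rho_Y|_{R_x}$, and compatibility with the abelian group object structure is immediate because the multiplication on $R\rtimes X$ is $m$ by construction. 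Verifying that $\Theta$ is natural in $p$ completes the proof. The analogous statement for ordinary racks already appears in \cite[Theorem 2.2]{MR2155522}, so the only genuinely new bookkeeping concerns the good involution, which is routine given $\eta_x=\rho_Y|_{R_x}$.
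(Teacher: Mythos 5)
Your proposal is correct and follows essentially the same route as the paper: it exhibits $\mathcal{T}$ and $\mathcal{S}$ as mutually quasi-inverse, identifies the fibres of $\mathscr{F}\rtimes X\to X$ with the $A_x$ to recover $\mathscr{F}$, and proves the crucial identity $u\star v=m\big(u\star s(y),\,s(x)\star v\big)$ exactly as the paper does (via $u=m(u,s(x))$, $v=m(s(y),v)$, and $m$ being a rack homomorphism on $Y\times_X Y$). The only differences are cosmetic: your $\Theta$ is the inverse of the paper's comparison map $\zeta_{\mathcal{O}}$, and you treat the two composites in the opposite order.
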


\begin{proof} Let $(X,\rho_X)$ be a symmetric rack. By Proposition \ref{abelian-group-object} and Proposition \ref{module}, we have functors $\mathcal{T}:  \mathbf{SRMod}_{(X, \rho_X)} \to \Ab(\SR|_{(X,\rho_X)})$ and $\mathcal{S}: \Ab(\SR|_{(X,\rho_X)}) \to \mathbf{SRMod}_{(X, \rho_X)}$. To show that the categories $\mathbf{SRMod}_{(X, \rho_X)}$ and $ \Ab(\SR|_{(X,\rho_X)})$ are equivalent, we show that $\mathcal{T}\mathcal{S}$ is naturally isomorphic to $\id_{\Ab(\SR|_{(X,\rho_X)})}$ and $\mathcal{S}\mathcal{T}$ is naturally isomorphic to $\id_{ \mathbf{SRMod}_{(X, \rho_X)}}$.
\par

Let $\mathcal{O} \in  \Ab(\SR|_{(X,\rho_X)})$ denote the object $p:(Y,\rho_Y) \rightarrow (X, \rho_X)$ with the multiplication map $m$,  the inverse map $i$, and the section $s$. Then $\mathcal{T}\mathcal{S} (\mathcal{O})$ is the object $q:(\mathscr{R} \rtimes X, \rho_{\mathscr{R} \rtimes X}) \rightarrow (X, \rho_X)$ in the category $ \Ab(\SR|_{(X,\rho_X)})$, where $\mathscr{R}=(R, \phi,\psi,\eta)$ as constructed in Proposition \ref{module}. Let $*$ be the rack operation on $X$, and $\tilde{*}$ the rack operation on $\mathscr{R} \rtimes X$. Note that, the abelian group structure on $q:(\mathscr{R} \rtimes X, \rho_{\mathscr{R} \rtimes X}) \rightarrow (X, \rho_X)$ has the multiplication, the inverse and the section given by
\begin{itemize}
\item $\mu ((a_1,x),(a_2,x)) = (a_1+a_2,x)=(m(a_1,a_2),x)$,
\item $\sigma (x) = (0,x)=(s(x),x)$,
\item $\nu((a,x)) = (-a,x)=(i(a),x)$,
\end{itemize}
for all $a_1,a_2, a \in R_x$ and $x \in X$. Define $\zeta_{\mathcal{O}}: (Y, \rho_Y) \rightarrow (\mathscr{R} \rtimes X, \rho_{\mathscr{R} \rtimes X})$ by setting $\zeta_{\mathcal{O}}(a)=(a,x)$ for  $a \in R_x$. Clearly, $\zeta_{\mathcal{O}}$ is a bijection and the diagram
		\begin{center}
			\begin{tikzcd}
				(Y,\rho_Y) \arrow[r, "\zeta_{\mathcal{O}}"] \arrow[rd, "p"'] & (\mathscr{R}\rtimes X,\rho_{\mathscr{R}\rtimes X}) \arrow[d, "q"] \\
				& (X,\rho_X)
			\end{tikzcd}
		\end{center}
commutes. Note that $\zeta_{\mathcal{O}}(m(a,b))=\mu(\zeta_{\mathcal{O}}(a)$, $\zeta_{\mathcal{O}}(b)),~ \zeta_{\mathcal{O}}(s(a))=\sigma(\zeta_{\mathcal{O}}(a))$ and $\zeta_{\mathcal{O}}(i(a))=\nu(\zeta_{\mathcal{O}}(a))$ for all $a,b \in R_x$. Hence, $\zeta_{\mathcal{O}}$ respects the abelian group structure. Now, for $a \in R_x$	and $b \in R_y$, we have
	\begin{eqnarray*}
		\zeta_{\mathcal{O}}(a) \tilde{\ast} \zeta_{\mathcal{O}}(b) &=& (a,x) \tilde{\ast} (b,y) \\
		&=& \big(\phi_{x,y}(a)+\psi_{x,y}(b), x \ast y \big)\\
		&=& \big(a \star s(y)+ s(x) \star b, x \ast y \big)\\
		&=& \big(m \big(a \star s(y),s(x) \star b \big), x \ast y \big)\\
		&=& \big(m((a,s(x)) \tilde{\star} (s(y),b)),x \ast y\big)\\
		&=&\big(m(a,s(x)) \star m(s(y),b), x \ast y\big),\quad \textrm{since $m: Y \times Y \to Y$ is a rack homomorphism}\\
		&=& (a \star b,x \ast y)\\
		&=& \zeta_{\mathcal{O}}(a \star b),
	\end{eqnarray*}
where $\star$ is the rack operation on $Y$ and $\tilde{\star}$ is the rack operation on $Y \times Y$. Thus, $\zeta_{\mathcal{O}}$ is a rack homomorphism. Since $\rho_{\mathscr{R}\rtimes X} \, \zeta_{\mathcal{O}}=\zeta_{\mathcal{O}} \, \rho_Y$, it follows that $\zeta_{\mathcal{O}}$ is a symmetric rack isomorphism, and hence 
$\zeta_{\mathcal{O}}: \mathcal{O} \to \mathcal{T}\mathcal{S} (\mathcal{O})$ is an isomorphism.
\par 

Let $\mathcal{O}$ and $\mathcal{O'}$ be the objects $p:(Y,\rho_Y) \rightarrow (X, \rho_X)$ and $p':(Y',\rho_{Y'}) \rightarrow (X,\rho_X)$ in $ \Ab(\SR|_{(X,\rho_X)})$, respectively. Let $f:\mathcal{O} \rightarrow \mathcal{O'}$ be a morphism. Then, by definition, $f: (Y, \rho_Y) \to (Y', \rho_{Y'})$ is a symmetric rack homomorphism such that $p' \circ f=p$ and $f$ preserves the abelian group structure. Let $x \in X$ and $a \in R_x$. Then $\zeta_{\mathcal{O'}} \, f(a)=(f_x(a),x)$, where $f_x=f|_{R_x}$ and $$\mathcal{T} \mathcal{S}(f) \, \zeta_{\mathcal{O}} (a)=\mathcal{T} \mathcal{S}(f)(a,x)= \mathcal{T}(f|_{R_x}(a))=\mathcal{T}(f_x(a))=(f_x(a),x).$$ Thus, the diagram 
\begin{center}
	\begin{tikzcd}[row sep=3em, column sep=4em]
		\mathcal{O} \arrow[r, "\zeta_{\mathcal{O}}"] \arrow[d, "f"] & \mathcal{T}\mathcal{S}(\mathcal{O}) \arrow[d, "\mathcal{T}\mathcal{S}(f)"] \\
	\mathcal{O'} \arrow[r, "\zeta_{\mathcal{O'}}"] & \mathcal{T}\mathcal{S}(\mathcal{O'})
	\end{tikzcd}
\end{center}
commutes, and we have shown that $\mathcal{T}\mathcal{S}$ is naturally isomorphic to $\id_{ \Ab(\SR|_{(X,\rho_X)})}$.
\par 
Next, we prove that $\mathcal{S}\mathcal{T}$ is naturally isomorphic to $\id_{\mathbf{SRMod}_{(X, \rho_X)}}$. Let $\mathcal{O}$ be the object $\mathscr{F}=(A,\phi,\psi,\eta)$ in $\mathbf{SRMod}_{(X, \rho_X)}$. Then $\mathcal{T}(\mathcal{O})$ is the abelian group object $p:(\mathscr{F} \rtimes X,\rho_{\mathscr{F} \rtimes X}) \rightarrow (X, \rho_X)$ as in Proposition \ref{abelian-group-object}. For each $x \in X$, since $A'_{x}:=p^{-1}(x)=(A_x,x)$, it follows that $\mathcal{S}\mathcal{T}(\mathcal{O}) $ is the object $\mathscr{F'}:=(A',\phi',\psi',\eta')$, where
\begin{align}
	\phi_{x,y}' (a,x) &= (a,x) \tilde{\ast} \sigma(y)=\ (a,x) \tilde{\ast} (0,y)=(\phi_{x,y}(a),x \ast y),  \label{I}\\
	\psi_{x,y}' (b,y) &= \sigma(x) \tilde{\ast} (b,y)=(0,x) \tilde{\ast} (b,y) =(\psi_{x,y}(b),x \ast y), \label{II}\\ 
	\eta_x' (a,x)& = \rho_{\mathscr{F} \rtimes X}(a,x)=(\eta_x(a), \rho_X(x)), \label{III}
\end{align}	
for all $a\in A_x$, $b \in A_y$ and $x,y \in X$. 
\par 
We now construct an $(X,\rho_X)$-module isomorphism $\lambda_{\mathcal{O}}: \mathcal{O} \to \mathcal{S}\mathcal{T}(\mathcal{O})$. For each $ x \in X$, define $(\lambda_{\mathcal{O}})_x:A_x \to A'_x$ as $a \mapsto (a,x)$ for all $a \in A_x$. Clearly, $(\lambda_O)_{x}$ is an abelian group isomorphism for each $x \in X$. Also, using \eqref{I}, \eqref{II} and \eqref{III}, we have
\begin{eqnarray*}
	\phi'_{x,y}(\lambda_O)_{x}(a) &=& \phi'_{x,y}(a,x)= (\phi_{x,y}(a),x \ast y) =(\lambda_O)_{x\ast y} \phi_{x,y}(a),\\
	\psi'_{x,y}(\lambda_O)_{y}(b)&=&\psi'_{x,y}(b,y) = (\psi_{x,y}(b),x \ast y)= (\lambda_O)_{x\ast y}\psi_{x,y}(b),\\
	\eta'_{x}(\lambda_O)_{x}(a) &=& \eta'_x(a,x)=(\eta_x(a), \rho_X(x))=(\lambda_O)_{\rho_X(x)} \eta_{x}(a),
\end{eqnarray*}
for all $a\in A_x$, $b \in A_y$ and $x,y \in X$.  Thus, \hyperref[X-map]{(3.0.1-3.0.3)} hold, and $\lambda_O$ is an $(X, \rho_X)$-module isomorphism. Finally, let $\mathcal{O}_1$ and $\mathcal{O}_2$ be the objects $\mathscr{F}_1=(A,\phi,\psi,\eta)$ and  $\mathscr{F}_2=(\tilde{A},\tilde{\phi},\tilde{\psi},\tilde{\eta})$ in $\mathbf{SRMod}_{(X, \rho_X)}$, respectively.  Let $g:\mathcal{O}_1 \rightarrow \mathcal{O}_2$ be a $(X, \rho_X)$-map. It is easy to see that the diagram
\begin{center}
	\begin{tikzcd}[row sep=3em, column sep=4em]
\mathcal{O}_1 \arrow[r, "\lambda_{\mathcal{O}_1}"] \arrow[d, "g"] & \mathcal{S}\mathcal{T}(\mathcal{O}_1)  \arrow[d, "\mathcal{T}\mathcal{S}(g)"] \\
\mathcal{O}_2\arrow[r, "\lambda_{\mathcal{O}_2}"] & \mathcal{S}\mathcal{T}(\mathcal{O}_2)	\end{tikzcd}
\end{center}
 commutes.  Thus, $\mathcal{S}\mathcal{T}$ is naturally isomorphic to $\id_{\mathbf{SRMod}_{(X, \rho_X)}}$, and the categories $ \Ab(\SR|_{(X,\rho_X)})$ and $\mathbf{SRMod}_{(X, \rho_X)}$ are equivalent. 
	\end{proof}

We now turn our attention to symmetric quandles. We observe that, if $\mathscr{F}=(A,\phi,\psi,\eta)$ is a module over a symmetric quandle $(X,\rho_X)$, then by Proposition \ref{semi-direct product}, the semi-direct product $(\mathscr{F} \rtimes X,\rho_{\mathscr{F} \rtimes X})$ is a symmetric quandle. Let $p:(Y,\rho_Y) \rightarrow (X, \rho_X)$ be an abelian group object with the multiplication map $m$ and the section $s$,  as in Proposition \ref{module}. We see that
\begin{eqnarray*}
\phi_{x,x}(u)+\psi_{x,x}(u) &=& u \star s(x) + s(x)\star u\\
&=& m \big( u \star s(x), s(x)\star u\big)\\
&=& m \big( (u, s(x)) \star (s(x),  u) \big)\\
&=& m \big( (u, s(x) \big) \star m \big( (s(x),  u) \big)\\
&=& u \star u\\
&=& u
\end{eqnarray*}
for all $x \in X$ and $u \in R_x$, which is  condition \hyperref[M9]{(M9)}. Thus, we immediately obtain the following result.

\begin{theorem}\label{equivalence of categories for symmetric quandle modules}
Let $(X,\rho_X)$ be a symmetric quandle. Then the categories  $\Ab(\SQ|_{(X,\rho_X)})$ and $\mathbf{SQMod}_{(X, \rho_X)}$ are equivalent.
\end{theorem}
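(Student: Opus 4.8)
The plan is to obtain the quandle equivalence by restricting the rack-level equivalence of Theorem \ref{equivalence of categories for symmetric rack modules} to the relevant full subcategories, so that almost no new computation is required. Concretely, I would first observe that $\mathbf{SQMod}_{(X, \rho_X)}$ is a full subcategory of $\mathbf{SRMod}_{(X, \rho_X)}$, consisting precisely of those modules satisfying the extra axiom \hyperref[M9]{(M9)}, and that $\Ab(\SQ|_{(X,\rho_X)})$ is the full subcategory of $\Ab(\SR|_{(X,\rho_X)})$ whose objects have total space a symmetric quandle rather than merely a symmetric rack. Since limits in $\SQ$ are computed exactly as in $\SR$, the finite products used to define abelian group objects agree, so an abelian group object in $\SQ|_{(X,\rho_X)}$ is the same data as an abelian group object in $\SR|_{(X,\rho_X)}$ whose underlying object happens to be a quandle. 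The strategy is then to check that the functors $\mathcal{T}$ and $\mathcal{S}$ of Propositions \ref{abelian-group-object} and \ref{module} carry these subcategories into one another, and to note that the natural isomorphisms built in the proof of Theorem \ref{equivalence of categories for symmetric rack modules} already restrict.

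First I would verify that $\mathcal{T}$ restricts. Given a symmetric quandle module $\mathscr{F}=(A,\phi,\psi,\eta)$, axiom \hyperref[M9]{(M9)} holds by definition, so Proposition \ref{semi-direct product} guarantees that $(\mathscr{F} \rtimes X,\rho_{\mathscr{F} \rtimes X})$ is a symmetric quandle; hence $\mathcal{T}(\mathscr{F})$ lies in $\Ab(\SQ|_{(X,\rho_X)})$. Next I would verify that $\mathcal{S}$ restricts. Given an abelian group object $p:(Y,\rho_Y)\rightarrow(X,\rho_X)$ whose total space is a symmetric quandle, the computation displayed immediately before the statement shows that the induced module satisfies $\phi_{x,x}(u)+\psi_{x,x}(u)=u$ for all $x\in X$ and $u\in R_x$, that is, axiom \hyperref[M9]{(M9)}; hence $\mathcal{S}(p)$ lies in $\mathbf{SQMod}_{(X, \rho_X)}$. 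On morphisms nothing changes, since both subcategories are full.

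Finally, since $\mathcal{T}$ and $\mathcal{S}$ restrict to functors between $\mathbf{SQMod}_{(X, \rho_X)}$ and $\Ab(\SQ|_{(X,\rho_X)})$, and since the components $\zeta_{\mathcal{O}}$ and $\lambda_{\mathcal{O}}$ of the natural isomorphisms $\mathcal{T}\mathcal{S}\cong\id$ and $\mathcal{S}\mathcal{T}\cong\id$ constructed in Theorem \ref{equivalence of categories for symmetric rack modules} are symmetric rack isomorphisms (respectively $(X,\rho_X)$-module isomorphisms) whose defining formulas never invoke the rack/quandle distinction, they remain isomorphisms in the quandle subcategories and assemble into the required natural isomorphisms. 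I do not expect a genuine obstacle here: the entire content is the closure of the two subcategories under $\mathcal{T}$ and $\mathcal{S}$, and both closure checks are exactly the two computations already supplied—Proposition \ref{semi-direct product} in one direction and the display preceding the theorem in the other. The only point demanding minor care is confirming that a symmetric quandle abelian group object is correctly characterised as a symmetric rack abelian group object whose total space is a quandle; this follows because the fibrewise abelian group structure, the good involution, and the projection are all defined purely at the level of symmetric racks, so that the restriction of $\mathcal{S}$ is meaningful and compatible with the inclusion of subcategories.
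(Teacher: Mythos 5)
Your proposal is correct and takes essentially the same approach as the paper: the paper also deduces the quandle case by observing that $\mathcal{T}$ restricts (via Proposition \ref{semi-direct product}, since axiom (M9) makes $\mathscr{F}\rtimes X$ a symmetric quandle) and that $\mathcal{S}$ restricts (via the computation $\phi_{x,x}(u)+\psi_{x,x}(u)=u\star u=u$ for an abelian group object whose total space is a quandle), after which the natural isomorphisms of Theorem \ref{equivalence of categories for symmetric rack modules} carry over unchanged.
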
 
\medskip

%%%%%%%%%%%%%%%%%%%%%%%%%%%%%
%End of Section
%%%%%%%%%%%%%%%%%%%%%%%%%%%%%

\section{Abelian extensions of symmetric racks and symmetric quandles}\label{section abelian extensions of symmetric racks}

Throughout this section, for a symmetric rack (respectively quandle) $(X,\rho_X)$, we denote its underlying rack (respectively quandle) by $(X, \ast)$. 

\begin{definition} Let $(X,\rho_X)$ be a symmetric rack (respectively quandle) and $\mathscr{F}=(A,\phi,\psi,\eta)$ be an $(X,\rho_X)$-module.
		An \textit{extension} of $(X,\rho_X)$ by $\mathscr{F}$ comprises of the following data:
		\begin{enumerate}
			\item[(E1)] A symmetric rack (respectively quandle) $(E,\rho_E)$, where $\star$ is the rack (respectively quandle) operation in $E$.
			\item[(E2)] An epimorphism $f:(E,\rho_E) \rightarrow (X,\rho_X)$. This epimorphism further induces a partition $E=\cup_{x\in X}E_x$, where $E_x:=f^{-1}(x)$ for $x \in X$.
			\item[(E3)] For each $x \in X$, a left $A_x$-action on $E_x$, written $(a, s) \mapsto a \cdot s$ for $a\in A_x$ and $s \in E_x$. Further, the action satisfies the following conditions for all $a\in A_x$, $b \in A_y$, $s \in E_x$ and $t \in E_y$:
			\begin{enumerate}
				\item The $A_x$-action on $E_x$ is free and transitive.
				\item $(a\cdot s)\star t=\phi_{x,y}(a)\cdot (s \star t)$.
				\item $s \star (b\cdot t)=\psi_{x,y}(b)\cdot (s \star t).$
				\item $\rho_E(a\cdot s)=\eta_x(a)\cdot \rho_E(s).$
			\end{enumerate}

		\end{enumerate}
	\end{definition}

For brevity, we denote an extension by $f:(E,\rho_E) \rightarrow (X, \rho_X)$. Observe that $\rho_E(E_x) \subset E_{\rho_X(x)}$ for each $x \in X$. Next, we introduce equivalence of such extensions.

\begin{definition}
	Let $(X,\rho_X)$ be a symmetric rack (respectively quandle) and $\mathscr{F}=(A,\phi,\psi,\eta)$ be an $(X,\rho_X)$-module.
Two extensions $f_1:(E_1,\rho_{E_1}) \rightarrow (X,\rho_X)$ and $f_2:(E_2,\rho_{E_2}) \rightarrow (X,\rho_X)$ of $(X,\rho_X)$ by $\mathscr{F}$ are {\it equivalent} if there exists a symmetric rack (respectively quandle) isomorphism, $\Phi:(E_1,\rho_{E_1}) \rightarrow (E_2,\rho_{E_2})$ which respects projection maps and group actions, that is,
	 \begin{enumerate}\label{equivalent extension}
	 	\item[(EE1)] $f_2 \Phi(s)=f_1(s) $ for all $s \in E_1$.
	 	\item[(EE2)] $\Phi(a\cdot s)=a \cdot \Phi(s)$ for all $s \in E_x$, $a \in A_x$ and $x \in X$.
	 \end{enumerate}
\end{definition}
	
Let $f:(E,\rho_E) \rightarrow (X, \rho_X)$ be an extension of $(X,\rho_X)$ by $\mathscr{F}$. Let $(E, \star)$ and $(X, *)$ be the underlying racks. Then, a {\it section} $s:(X,\rho_X) \rightarrow (E,\rho_E)$ is a map such that $fs=\id_X$ and $s \rho_X=\rho_E s$. 	Since, $f$ is a symmetric rack homomorphism, it follows that $s(x) \star s(y)\in E_{x \ast y}$. Further, since $A_{x\ast y}$ acts freely and transitively on $E_{x \ast y}$, there exists a unique $\sigma_{x,y} \in A_{x \ast y}$ such that
$$s(x) \star s(y)= \sigma_{x,y}\cdot s(x \ast y).$$

This leads to the following definition.
	
	\begin{definition}
		Let $(X,\rho_X)$ be a symmetric rack (respectively quandle) and $\mathscr{F}=(A,\phi,\psi,\eta)$ be an $(X,\rho_X)$-module.
		Given an extension $f:(E,\rho_E) \rightarrow (X, \rho_X)$ of $(X, \rho)$ by $\mathscr{F}$ and a section $s:(X,\rho_X) \rightarrow (E,\rho_E)$, a \textit{factor set} is a map $\sigma:X \times X \rightarrow \bigsqcup_{x \in X} A_x$ such that
\begin{equation}\label{factor set}
s(x) \star s(y)= \sigma_{x,y} \cdot s(x \ast y)
\end{equation}
 for all $x,y \in X$, where we write $\sigma(x,y)=\sigma_{x,y}$.
	\end{definition}
	
Note that when $\sigma_{x,y}=0 \in A_{x*y}$, then $s:(X,\rho_X) \rightarrow (E,\rho_E)$ is a symmetric rack homomorphism. Thus, $\sigma$ can be regarded as an obstruction to $s$ being a symmetric rack homomorphism. For each $x \in X$, since the $A_x$-action on $E_x$ is free and transitive, every element of $E_x$ can be written uniquely in the form 
$a\cdot s(x)$ for some unique $a \in A_x$. Thus, for $a \in A_x$ and $b\in A_y$, we have
\begin{eqnarray*}
\big(a\cdot s(x)\big) \star \big(b\cdot s(y)\big)&=& \phi_{x,y}(a)\cdot \big(s(x) \star (b\cdot s(y))\big)\\
&=&\phi_{x,y}(a)\cdot (\psi_{x,y}(b)\cdot \big(s(x)\star s(y))\big)\\
&=&\phi_{x,y}(a)\cdot (\psi_{x,y}(b)\cdot \big(\sigma_{x,y} \cdot s(x \ast y))\big)\\
&=& \big(\phi_{x,y}(a)+\psi_{x,y}(b)+\sigma_{x,y})\cdot (s(x\ast y)\big).
\end{eqnarray*}
	Thus, the rack structure on $E$ is completely determined by the factor set $\sigma$.
	
	\begin{theorem}\label{extension}
		Let $(X,\rho_X)$ be a symmetric rack and $\mathscr{F}=(A,\phi,\psi,\eta)$ be an $(X,\rho_X)$-module. Let $\sigma:X \times X \rightarrow \bigsqcup_{x \in X} A_x$ be a map and ${E}(\mathscr{F},\sigma) := \big\{(a,x) \mid x\in X, ~a \in A_x \big\}$. Define the binary operation
		$$(a,x) \tilde{\star} (b,y) := \big(\phi_{x,y}(a)+\psi_{x,y}(b)+\sigma_{x,y},x\ast y \big)$$
on ${E}(\mathscr{F},\sigma)$ and the map  $\rho_{E(\mathscr{F},\sigma)}:E(\mathscr{F},\sigma) \to E(\mathscr{F},\sigma)$ given by
$$ (a,x) \mapsto \big(\eta_x(a),\rho_X(x)\big)$$
for all $a \in A_x, b \in A_y$ and $x,y \in X$. Then, $(E(\mathscr{F},\sigma),\rho_{E(\mathscr{F},\sigma)})$ is an extension of $(X,\rho_X)$ by $\mathscr{F}$ with factor set $\sigma$ if and only if 
		\begin{enumerate}
		\item [$\mathrm{(F1)}$]$\sigma_{x \ast y,z}+\phi_{x \ast y,z}(\sigma_{x,y})=\phi_{x \ast z, y \ast z}(\sigma_{x,z})+\sigma_{x \ast z, y \ast z}+\psi_{x \ast z, y \ast z}(\sigma_{y,z})$,
		\item[$\mathrm{(F2)}$] $\sigma_{\rho_X(x),y}=\eta_{x\ast y}(\sigma_{x,y})$,
		\item[$\mathrm{(F3)}$] $\phi_{x\ast \rho_X(y),y}(\sigma_{x,\rho_X(y)})=-\sigma_{x \ast \rho_X(y),y}$,	
	\end{enumerate}	
for all $x,y,z \in X$. Moreover, if $(E, \rho_E)$ is an extension of $(X, \rho_X)$ by $\mathscr{F}$ with factor set $\sigma$, then $\mathrm{(F1)-(F3)}$ hold and $(E,\rho_E)$ is equivalent to $E(\mathscr{F},\sigma).$
\par

Further, if $(X, \rho_X)$ is a symmetric quandle, then we have the additional condition
\begin{enumerate}
\item[$\mathrm{(F4)}$] $\sigma_{x, x}=0$ for all $x \in X$.
\end{enumerate}	
\end{theorem}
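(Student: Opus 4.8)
The plan is to treat the biconditional as a structural statement: $E(\mathscr{F},\sigma)$ equipped with $\tilde{\star}$ and $\rho_{E(\mathscr{F},\sigma)}$ is an extension exactly when it is a symmetric rack whose projection realizes $\sigma$ as a factor set, so the entire content reduces to matching the three symmetric-rack axioms against (F1)--(F3). First I would record that right translation by $(b,y)$ is a bijection: its second component $x\mapsto x\ast y$ is a bijection of $X$, and on each fibre it is the affine map $a\mapsto \phi_{x,y}(a)+\psi_{x,y}(b)+\sigma_{x,y}$, which is invertible because $\phi_{x,y}$ is an isomorphism by the definition of an $(X,\rho_X)$-module. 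Next I would expand both sides of the self-distributivity law for $\tilde{\star}$, group the terms into their $a$-, $b$-, $c$- and $\sigma$-parts, and observe that the first three cancel by (M1), (M2) and (M7), while the surviving $\sigma$-part is precisely (F1). In the same spirit, comparing $\rho_{E(\mathscr{F},\sigma)}\big((a,x)\big)\tilde{\star}(b,y)$ with $\rho_{E(\mathscr{F},\sigma)}\big((a,x)\tilde{\star}(b,y)\big)$ collapses, via (M4) and (M5) and the good involution of $X$, to the single requirement (F2), which is axiom (S2); and (S1) is automatic from (M3).

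The condition (S3), $(a,x)\tilde{\star}\,\rho_{E(\mathscr{F},\sigma)}(b,y)=(a,x)\,\tilde{\star}^{-1}(b,y)$, is the delicate step: I would first write the inverse operation explicitly using that $\phi$ is invertible, then apply $\phi_{x\ast^{-1}y,\,y}$ to both sides. After this the $a$-term collapses by (M6), the $b$-term by (M8), and the remaining $\sigma$-term is exactly (F3). Once (F1)--(F3) are in hand, $E(\mathscr{F},\sigma)$ is a symmetric rack; I then equip the projection $f(a,x)=x$ with the free transitive fibre action $c\cdot(a,x)=(c+a,x)$, verify the compatibility conditions (a)--(d) of (E3) directly, and check with the section $s(x)=(0,x)$ that $s(x)\tilde{\star}s(y)=(\sigma_{x,y},x\ast y)=\sigma_{x,y}\cdot s(x\ast y)$, so that $\sigma$ is genuinely the factor set. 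Both directions of the ``if and only if'' then follow, since being a symmetric rack is necessary for being an extension and the three axioms are equivalent to (F1)--(F3).

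For the ``moreover'' part I would run these computations in reverse inside an abstract extension $(E,\rho_E)$ with a chosen section $s$ and factor set $\sigma$: self-distributivity of $\star$ applied to $s(x),s(y),s(z)$ together with the action rules (E3)(b)--(c) yields (F1); (S2) for $\rho_E$ applied to $s(x)\star s(y)$ together with (E3)(d) and $\rho_E s=s\rho_X$ yields (F2); and (S3) applied to $s(x)\star\rho_E(s(y))$ yields (F3). The equivalence is the map $\Phi:E(\mathscr{F},\sigma)\to E$, $\Phi(a,x)=a\cdot s(x)$; it is a bijection because each fibre action is free and transitive, it intertwines projections and actions by construction, and the identity $\big(a\cdot s(x)\big)\star\big(b\cdot s(y)\big)=\big(\phi_{x,y}(a)+\psi_{x,y}(b)+\sigma_{x,y}\big)\cdot s(x\ast y)$ established before the theorem, together with $\rho_E(a\cdot s(x))=\eta_x(a)\cdot s(\rho_X(x))$, shows it is a symmetric rack isomorphism. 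Finally, in the quandle case idempotency of $s(x)\star s(x)$ forces $\sigma_{x,x}\cdot s(x)=s(x)$, whence (F4), while conversely (F4) together with (M9) gives $(a,x)\tilde{\star}(a,x)=(a,x)$. I expect the main obstacle to be the (S3)$\Leftrightarrow$(F3) step, since it is the only place where the inverse rack operation intervenes and it requires the two subtler module axioms (M6) and (M8) applied in the right order; the remaining verifications are term-by-term bookkeeping driven by the linearity of $\phi$, $\psi$ and $\eta$.
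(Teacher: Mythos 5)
Your proposal is correct and follows essentially the same route as the paper: reduce the symmetric-rack axioms (S1)--(S3) for $\tilde{\star}$ and $\rho_{E(\mathscr{F},\sigma)}$ to (F1)--(F3) via the module axioms (with (S3) handled exactly as you describe, through (M6) and (M8) after applying $\phi_{x\ast^{-1}y,y}$), equip the projection with the fibrewise action and the section $s(x)=(0,x)$, and realize the equivalence by $\Phi(a,x)=a\cdot s(x)$. The only differences are expository: the paper outsources the rack-level verifications (self-distributivity, bijectivity of right translations, and the action compatibilities (E3)(b)--(c)) to a citation of Jackson's Proposition 3.1 on rack extensions, and it handles the derivation of (F1)--(F3) from an abstract extension implicitly (``the converse follows by reversing the steps''), whereas you carry out both computations by hand --- including the direct derivation of (F1)--(F3) by applying the rack axioms to section elements and invoking freeness of the fibre action --- which is, if anything, more self-contained.
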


\begin{proof}
Suppose that $\mathrm{(F1)-(F3)}$ hold. That $(E(\mathscr{F},\sigma),\rho_{E(\mathscr{F},\sigma)})$ is a rack is proved in \cite[Proposition 3.1]{MR2155522}. We check that $\rho_{E(\mathscr{F},\sigma)}$ is a good involution.
\begin{itemize}
\item For (S1), we have $$\rho_{E(\mathscr{F},\sigma)}^2((a,x))=\rho_{E(\mathscr{F},\sigma)}(\eta_x(a), \rho_X(x))=(\eta_{\rho_X(x)}(\eta_x(a)),\rho_X^2(x))=(a,x).$$
\item For (S2), we see that 
\begin{eqnarray*}
\big(\rho_{E(\mathscr{F},\sigma)}(a,x) \big)\tilde{\star} (b,y) &=& \big(\eta_x(a), \, \rho_X(x)\big) \tilde{\star} (b,y)\\
&=& \big(\phi_{\rho_X(x),y}(\eta_x(a))+\psi_{\rho_X(x),y}(b)+\sigma_{\rho_X(x),y}, \,\rho_X(x) \ast y \big)\\
&=&\big( \eta_{x \ast y}(\phi_{x,y}(a)+\psi_{x,y}(b)+\sigma_{x,y}), \, \rho_X(x \ast y) \big), \quad \textrm{by (M4), (M5) and (F2)}\\
&=& \rho_{E(\mathscr{F},\sigma)} \big(\phi_{x,y}(a)+\psi_{x,y}(b)+\sigma_{x,y}, \, x \ast y \big)\\
&=& \rho_{E(\mathscr{F},\sigma)} \big((a,x)\tilde{\star} (b,y) \big).
\end{eqnarray*}
\item For (S3), we have 
\begin{eqnarray*}
(a,x) \tilde{\star} \rho_{E(\mathscr{F},\sigma)}(b,y) &=& (a,x)\tilde{\star} \big(\eta_y(b), \,\rho_X(y) \big)\\
&=& \big(\phi_{x,\rho_X(y)}(a)+\psi_{x, \rho_X(y)}(\eta_y(b))+\sigma_{x, \rho_{X}(y)}, \, x \ast \rho_X(y) \big)\\
&=& \big(\phi_{x \ast^{-1}y,y}^{-1}(a-\sigma_{x\ast^{-1}y,y}-\psi_{x \ast^{-1}y,y}(b)), \, x \ast^{-1}y \big), \quad \textrm{by (M6), (M8) and (F3)}\\
 &=& (a,x)\tilde{\star}^{-1}(b,y).
 \end{eqnarray*}
\end{itemize}
Hence,  $(E(\mathscr{F},\sigma),\rho_{E(\mathscr{F},\sigma)})$ is a symmetric rack.	Define $f_1:(E(\mathscr{F},\sigma),\rho_{E(\mathscr{F},\sigma)}) \rightarrow (X,\rho_X)$ to be the projection onto the second coordinate. Let $A_x$ act on $E(\mathscr{F},\sigma)_x =f_1^{-1}(x)$ by $a\cdot (b,x)=(a+b,x)$ for each $x \in X$ and $a, b \in A_x$. Note that these actions are free and transitive. Also, as proved in \cite[Proposition 3.1]{MR2155522}, the actions satisfy
$$\big(\alpha\cdot (a,x) \big)\tilde{\star} (b,y)=\phi_{x,y}(\alpha)\cdot \big((a,x)\tilde{\star}(b,y) \big) \quad \textrm{and} \quad (a,x) \tilde{\star} \, \big(\beta \cdot (b,y) \big)= \psi_{x,y}(\beta)\cdot \big((a,x)\tilde{\star}(b,y)\big)$$
for all $\alpha \in A_x$, $\beta \in A_y$, $(a,x) \in E(\mathscr{F},\sigma)_x$ and $(b, y) \in E(\mathscr{F},\sigma)_x$. Further,  we have $$\rho_{E(\mathscr{F},\sigma)} \big(\alpha\cdot (a,x) \big)=\rho_{E(\mathscr{F},\sigma)} \big((\alpha+a,x)\big)=\big(\eta_x(\alpha+a),\rho_X(x)\big)=\eta_x(\alpha)\cdot (\rho_{E(\mathscr{F},\sigma)}(a,x)).$$ Thus, $(E(\mathscr{F},\sigma),\rho_{E(\mathscr{F},\sigma)})$ is an extension of $(X,\rho_X)$ by $\mathscr{F}$.	Define $s:X \rightarrow E(\mathscr{F},\sigma)$ as $s(x )= (0,x)$. This is clearly a section satisfying
	$$s(x) \tilde{\star} s(y)=(0,x) \ast (0,y)= (\sigma_{x,y},x*y)= \sigma_{x,y} \cdot s(x \ast y)$$ and
	$$\rho_{E(\mathscr{F},\sigma)}(s(x))=\rho_{E(\mathscr{F},\sigma)}(0,x)=(0,\rho_X(x))=s(\rho_X(x)).$$
	Thus, $\sigma$ is the factor set corresponding to the section s. The converse follows by reversing the steps. 
\par

For the second assertion, let $f_2:(E,\rho_E) \rightarrow (X, \rho_X)$ be an extension of $(X,\rho_X)$ by $\mathscr{F}$ with factor set $\sigma$ corresponding to a section $s:(X,\rho) \rightarrow (E,\rho_E)$. Define $$\Phi:(E(\mathscr{F},\sigma),\rho_{E(\mathscr{F},\sigma)})\rightarrow (E,\rho_E)$$ as $\Phi \big((a,x)\big)= a\cdot s(x)$.
	Since, $A_x$ acts freely and transitively, it follows that $\Phi$ is a bijection. Also, using (E3), we have
\begin{eqnarray*}
		\Phi \big((a,x)\tilde{\star}(b,y) \big)&=&\Phi \big(\phi_{x,y}(a)+\psi_{x,y}(b)+\sigma_{x,y}, \, x\ast y\big)\\
		&=&(\phi_{x,y}(a)+\psi_{x,y}(b)+\sigma_{x,y})\cdot s(x \ast y)\\
		&=& (\phi_{x,y}(a)+\psi_{x,y}(b)) \cdot (s(x)\tilde{\star} s(y))\\
		&=& \phi_{x,y}(a) \cdot \big(s(x)\tilde{\star} (b\cdot s(y)) \big)\\
		&=&(a\cdot s(x))\tilde{\star} (b \cdot s(y))\\
		&=& \Phi(a,x) \tilde{\star} \Phi(b,y).
	\end{eqnarray*}
Hence, $\Phi$ is a rack isomorphism. Further, we see that 
\begin{eqnarray*}
\Phi \rho_{E(\mathscr{F},\sigma)}(a,x) &=& \Phi \big(\eta_x(a),\rho_X(x) \big)\\
&=& \eta_x(a)\cdot s(\rho_X(x))\\
&=& \eta_x(a)\cdot \rho_E(s(x)), \quad \textrm{since $\rho_E \, s=s \, \rho_X$}\\
&=& \rho_E(a\cdot s(x)), \quad \textrm{by (E3)}\\
&=& \rho_E \Phi(a,x)
\end{eqnarray*}
for all $x \in X$ and $a \in A_x$. Hence, $\Phi$ is an isomorphism of symmetric racks. Clearly, $f_2 \Phi(a, x)=f_2(a \cdot s(x))=x=f_1(a, x)$ for all $(a, x) \in E(\mathscr{F},\sigma)_x$ and $\Phi(\alpha \cdot (a,x))=\Phi((\alpha+a,x))=(\alpha+a)\cdot s(x)=\alpha\cdot \Phi(a,x)$ for all $\alpha, a \in A_x$ and $x \in X$. Hence, $\Phi$ is an equivalence of extensions.
\par
Finally, if $(X, \rho_X)$ is a symmetric quandle, then the condition (F4) follows.
\end{proof}

\begin{proposition}\label{equivalence}
	Let $(X,\rho_X)$ be a symmetric rack (respectively quandle) and $\mathscr{F}$ an $(X,\rho_X)$-module. Let $\sigma$ and $\tau$ be factor sets corresponding to sections $s$ and $t$, respectively, for extensions of $(X,\rho_X)$ by $\mathscr{F}$. Then the following assertions are equivalent:
	\begin{enumerate}
		\item The extensions $(E(\mathscr{F},\sigma),\rho_{E(\mathscr{F},\sigma)})$ and $(E(\mathscr{F},\tau),\rho_{E(\mathscr{F},\tau)})$ are equivalent.
		\item There exists a set of elements $\{v_x\in A_x \mid x \in X\}$ such that $\tau_{x,y}=\sigma_{x,y}+\phi_{x,y}(v_x)+\psi_{x,y}(v_y)-v_{x\ast y}$ and $\eta_x(v_x)=v_{\rho_X(x)}$ for $x,y \in X$.
		\item $\sigma$ and $\tau$ are factor sets of the same extension of $(X, \rho_X)$ by $\mathscr{F}$ corresponding to different sections $s$ and $t$, respectively.
	\end{enumerate} 
\end{proposition}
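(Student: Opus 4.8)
The plan is to establish the cycle of implications $(1) \Rightarrow (2) \Rightarrow (3) \Rightarrow (1)$, using Theorem \ref{extension} as the main external input, since it already identifies every extension with one in the normal form $E(\mathscr{F},\sigma)$ and supplies the explicit equivalence. Throughout, I write $s(x)=(0,x)$ for the canonical section of $E(\mathscr{F},\sigma)$ (resp.\ $E(\mathscr{F},\tau)$), which by construction has factor set $\sigma$ (resp.\ $\tau$), and I recall that each fibre $E(\mathscr{F},\sigma)_x=\{(a,x)\mid a\in A_x\}$ carries the free transitive $A_x$-action $\alpha\cdot(a,x)=(\alpha+a,x)$.

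For $(1)\Rightarrow(2)$, suppose $\Phi:(E(\mathscr{F},\sigma),\rho_{E(\mathscr{F},\sigma)})\rightarrow(E(\mathscr{F},\tau),\rho_{E(\mathscr{F},\tau)})$ is an equivalence of extensions. Condition (EE1) forces $\Phi$ to preserve the second coordinate, and (EE2) together with freeness of the action shows that $\Phi$ is completely determined by its values on the canonical section: writing $\Phi(0,x)=(-v_x,x)$ for a unique $v_x\in A_x$, equivariance gives $\Phi(a,x)=(a-v_x,x)$ for all $a\in A_x$. Imposing that $\Phi$ is a rack homomorphism, that is, comparing $\Phi\big((a,x)\,\tilde{\star}\,(b,y)\big)$ with $\Phi(a,x)\,\tilde{\star}\,\Phi(b,y)$ and cancelling the $\phi_{x,y}(a)$ and $\psi_{x,y}(b)$ terms, yields precisely $\tau_{x,y}=\sigma_{x,y}+\phi_{x,y}(v_x)+\psi_{x,y}(v_y)-v_{x\ast y}$. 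Imposing that $\Phi$ commute with the good involutions, evaluated on the elements $(0,x)$, yields $\eta_x(v_x)=v_{\rho_X(x)}$. This produces the family $\{v_x\}$ required by (2).

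For $(2)\Rightarrow(3)$, given such a family $\{v_x\}$ I retain the single extension $E(\mathscr{F},\sigma)$ and merely change the section, defining $t(x)=(v_x,x)$. The relation $\eta_x(v_x)=v_{\rho_X(x)}$ is exactly what is needed to verify $t\,\rho_X=\rho_{E(\mathscr{F},\sigma)}\,t$, so $t$ is a genuine section, while $f_1 t=\id_X$ is clear. Computing $t(x)\,\tilde{\star}\,t(y)$ and reading off the factor set relative to $t$ gives $\phi_{x,y}(v_x)+\psi_{x,y}(v_y)+\sigma_{x,y}-v_{x\ast y}$, which equals $\tau_{x,y}$ by hypothesis. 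Hence $\sigma$ (from the section $s$) and $\tau$ (from the section $t$) are factor sets of one and the same extension, which is (3). Finally $(3)\Rightarrow(1)$ is immediate: if $\sigma$ and $\tau$ both arise from a single extension $(E,\rho_E)$, then by the second assertion of Theorem \ref{extension} there are equivalences $(E,\rho_E)\simeq E(\mathscr{F},\sigma)$ and $(E,\rho_E)\simeq E(\mathscr{F},\tau)$, and since equivalence of extensions is manifestly an equivalence relation, composition gives $E(\mathscr{F},\sigma)\simeq E(\mathscr{F},\tau)$.

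I expect the only real subtlety to be bookkeeping rather than a genuine obstacle: the sign convention in defining $v_x$ from $\Phi(0,x)$ must be fixed so that the cocycle relation and the $\eta$-relation emerge in the stated form, and the new map $t$ must be checked to respect the good involution. This last point is the one place where the symmetric (as opposed to merely rack-theoretic) structure intervenes, and it is governed entirely by the condition $\eta_x(v_x)=v_{\rho_X(x)}$. In the symmetric quandle case one additionally verifies that the constructed $\tau$ retains (F4): since $x\ast x=x$, the module axiom \hyperref[M9]{(M9)}, namely $\phi_{x,x}(v_x)+\psi_{x,x}(v_x)=v_x$, forces $\tau_{x,x}=\sigma_{x,x}=0$, so no separate argument is required.
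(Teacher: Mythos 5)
Your proof is correct and follows essentially the same route as the paper's: your $(1)\Rightarrow(2)$, via $\Phi(a,x)=(a-v_x,x)$ and extracting the cocycle relation from the homomorphism condition and the $\eta$-relation from compatibility with the involutions, and your $(2)\Rightarrow(3)$, via re-sectioning $E(\mathscr{F},\sigma)$ by $t(x)=(v_x,x)$, are precisely the paper's computations (your sign choice $\Phi(0,x)=(-v_x,x)$ even makes the relation come out exactly as stated, whereas the paper's convention $\Phi(a,x)=(a+v_x,x)$ produces it only up to replacing $v_x$ by $-v_x$). The one minor deviation is $(3)\Rightarrow(1)$: you compose the two equivalences $E\simeq E(\mathscr{F},\sigma)$ and $E\simeq E(\mathscr{F},\tau)$ supplied by the final assertion of Theorem~\ref{extension} and use that equivalence of extensions is symmetric and transitive, while the paper instead writes down the explicit equivalence $\Phi(a,x)=(a+v_x,x)$ obtained from $s(x)=v_x\cdot t(x)$; both arguments are sound.
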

	
\begin{proof}
First, we prove $(1) \Leftrightarrow (2)$. Let $\Phi:(E(\mathscr{F},\sigma),\rho_{E(\mathscr{F},\sigma)}) \rightarrow (E(\mathscr{F},\tau),\rho_{E(\mathscr{F},\tau)})$ be an equivalence of extensions. Since $\Phi$ respects the projections, we obtain $\Phi(0,x) = (v_x,x)$ for some $v_x \in A_x$.	Further, since $\Phi$ respects the group actions, this gives $\Phi(a,x)=\Phi(a \cdot (0,x))=a \cdot \Phi(0,x)=a \cdot (v_x,x)=(a+v_x,x)$ for all $a \in A_x$. Then,
	$$\Phi \big((a,x)\tilde{\star}(b,y) \big)=\big(\phi_{x,y}(a)+\psi_{x,y}(b)+\tau_{x,y}+v_{x \ast y}, \, x \ast y \big)$$ and
    $$\Phi(a,x)\bar{\star} \Phi(b,y)=(a+v_x,x)\bar{\star} (b+v_y,y)=\big(\phi_{x,y}(a+v_x)+\psi_{x,y}(b+v_y)+\sigma_{x,y}, \, x \ast y \big),$$
    where $\tilde{\star}$ and $\bar{\star}$ are the rack operations in $E(\mathscr{F},\sigma)$  and $E(\mathscr{F},\tau)$, respectively.     Thus, $\Phi \big((a,x)\bar{\star}(b,y) \big)=\Phi(a,x)\bar{\star} \Phi(b,y)$ if and only if $\tau_{x,y}=\sigma_{x,y}+\phi_{x,y}(v_x)+\psi_{x,y}(v_y)-v_{x\ast y}$ for $x,y \in X$. We have
    $$\Phi \rho_{E(\mathscr{F},\sigma)}(a,x)=\Phi(\eta_x(a),\rho_X(x))=(\eta_x(a)+v_{\rho_X(x)},\rho_X(x))$$
    and
    $$\rho_{E(\mathscr{F},\tau)}\Phi(a,x)=\rho_{E(\mathscr{F},\tau)}(a+v_x,x)=(\eta_x(a+v_x),\rho_X(x)).$$
    Thus, $\Phi \rho_{E(\mathscr{F},\sigma)}=\rho_{E(\mathscr{F},\tau)}\Phi$ if and only if $\eta_x(v_x)=v_{\rho_X(x)}$.
\par    
    Next, we prove $(1) \Leftrightarrow (3)$. We shall use $(1) \Leftrightarrow (2)$ proved above. Obviously, $\sigma$ is the factor set corresponding to the section $s:(X,\rho_X) \rightarrow (E(\mathscr{F},\sigma),\rho_{E(\mathscr{F},\sigma)})$ given by $s(x) = (0,x)$. Define  $t:(X,\rho_X) \rightarrow (E(\mathscr{F},\sigma),\rho_{E(\mathscr{F},\sigma)})$ by $t(x) = (v_x,x)$. Since $$t(\rho_X(x))=(v_{\rho_X(x)},\rho_X(x))=(\eta_x(v_x),\rho_X(x))=\rho_{E(\mathscr{F},\sigma)}(t(x)),$$ it follows that $t$ is a section. Note that, we have $\tau_{x,y}=\sigma_{x,y}+\phi_{x,y}(v_x)+\psi_{x,y}(v_y)-v_{x\ast y}$. Then, we see that
\begin{eqnarray*}
t(x) \tilde{\star} t(y) &=& (v_x,x) \tilde{\star} (v_y,y)\\
&=& (\phi_{x, y}(v_x) + \psi_{x, y} (v_y) + \sigma_{x, y}, x*y)\\
&=&  (\tau_{x, y} + v_{x*y}, x*y)\\
&=&\tau_{x, y} \cdot (v_{x*y}, x*y)\\
&=& \tau_{x, y}  \cdot t(x*y).
\end{eqnarray*}    
Thus, $\tau$ is the factor set corresponding to $t$.
    \par
    Conversely, suppose that $\sigma$ and $\tau$ are factor sets of some extension $(E,\rho_E)$ of $(X,\rho_X)$ by $\mathscr{F}$ corresponding to sections $s: (X,\rho_X) \rightarrow (E,\rho_E)$ and $t:(X,\rho_X) \rightarrow (E,\rho_E)$, respectively. Then, for each $x \in X$, we have $s(x)=v_x \cdot t(x)$ for a unique $v_x \in A_x$.   Define $\Phi:E(\mathscr{F},\sigma) \rightarrow E(\mathscr{F},\tau)$ as $\Phi \big((a,x) \big)= (a+v_x,x)$. Then, a direct check implies that $\Phi$ is an equivalence of extensions.
\end{proof}	
	
\begin{corollary}\label{split}
	Let $(X,\rho_X)$ be a symmetric rack (respectively quandle) and $\mathscr{F}=(A,\phi,\psi,\eta)$ be an $(X,\rho_X)$-module.
	Given an extension $f:(E,\rho_E) \rightarrow (X, \rho_X)$ of $(X,\rho_X)$ by $\mathscr{F}$, the following statements are equivalent:
	\begin{enumerate}
		\item There exists a symmetric-rack homomorphism $s:(X, \rho_X) \rightarrow (E,\rho_E)$ such that $f \, s=\id_X$ and $s \,\rho_X=\rho_E \,s.$
		\item There exists a section such that its corresponding factor set is trivial.
		\item For any section and the corresponding factor set $\sigma$, there exists a set of elements $\{v_x\in A_x \mid x\in X\}$ such that
		$$\sigma_{x,y}=\phi_{x,y}(v_x)+\psi_{x,y}(v_y)-v_{x\ast y}\quad  \textrm{and} \quad \eta_x(v_x)=v_{\rho_X(x)}$$
	\end{enumerate}
 for all $x,y \in X$.
\end{corollary}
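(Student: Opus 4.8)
The plan is to prove the two equivalences $(1)\Leftrightarrow(2)$ and $(2)\Leftrightarrow(3)$, obtaining the second one directly from Proposition \ref{equivalence}. I would first dispose of $(1)\Leftrightarrow(2)$ from the definition of a factor set. A section $s$ already satisfies $f\,s=\id_X$ and $s\,\rho_X=\rho_E\,s$ by definition, so it is a symmetric rack homomorphism precisely when it also respects the rack operation, that is, when $s(x)\star s(y)=s(x\ast y)$ for all $x,y\in X$. Since $s(x)\star s(y)=\sigma_{x,y}\cdot s(x\ast y)$ and $A_{x\ast y}$ acts freely on $E_{x\ast y}$, this holds if and only if $\sigma_{x,y}=0$ for all $x,y$. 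Thus the existence of a splitting homomorphism is equivalent to the existence of a section whose factor set is trivial, which is $(1)\Leftrightarrow(2)$.

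For $(2)\Leftrightarrow(3)$ I would recognise condition $(3)$ as the assertion that $\sigma$ is cohomologous to the zero factor set and then invoke Proposition \ref{equivalence}. To prove $(2)\Rightarrow(3)$, let $t$ be a section with trivial factor set and let $s$ be an arbitrary section with factor set $\sigma$; since both are sections of the same extension, Proposition \ref{equivalence} produces elements $\{v_x\in A_x\}$ with $0=\sigma_{x,y}+\phi_{x,y}(v_x)+\psi_{x,y}(v_y)-v_{x\ast y}$ and $\eta_x(v_x)=v_{\rho_X(x)}$. Replacing $v_x$ by $-v_x$ and using that $\phi,\psi,\eta$ are homomorphisms yields exactly the two identities of $(3)$ for the given $s$. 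For the converse $(3)\Rightarrow(2)$, I would start from a section $s$ with factor set $\sigma$ together with the family $\{v_x\}$ furnished by $(3)$, and define $t(x):=(-v_x)\cdot s(x)$. Using axiom (E3)(d) together with $\eta_x(v_x)=v_{\rho_X(x)}$ one checks that $t$ is again a section, and a short computation of $t(x)\star t(y)$ via (E3)(b)--(c) shows its factor set equals $\sigma_{x,y}-\phi_{x,y}(v_x)-\psi_{x,y}(v_y)+v_{x\ast y}$, which vanishes by the hypothesis of $(3)$. Hence $t$ is a section with trivial factor set, establishing $(2)$.

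The argument is essentially a bookkeeping of signs and coboundaries, so I do not anticipate a genuine obstacle; the only point requiring care is that $(3)$ is a universal statement over all sections while $(2)$ is existential, so I would stress that Proposition \ref{equivalence} guarantees any two factor sets of a fixed extension are mutually cohomologous, and this is what permits transferring between an arbitrary section and the distinguished splitting. In the symmetric quandle case the extra condition $\mathrm{(F4)}$, namely $\sigma_{x,x}=0$, is preserved under all of the above modifications, because the coboundary term $\phi_{x,x}(v_x)+\psi_{x,x}(v_x)-v_{x\ast x}$ equals $v_x-v_x=0$ by module axiom \hyperref[M9]{(M9)} and $x\ast x=x$; thus the same proof applies verbatim.
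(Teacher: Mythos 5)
Your proof is correct and takes essentially the approach the paper intends: the corollary is stated there without proof as an immediate consequence of Proposition \ref{equivalence} and the definition of factor sets, which is exactly how you argue — $(1)\Leftrightarrow(2)$ by freeness of the fibre actions, and $(2)\Leftrightarrow(3)$ by applying Proposition \ref{equivalence} to the pair consisting of $\sigma$ and the trivial factor set. Your sign bookkeeping ($v_x\mapsto -v_x$) and the check via (E3) that $t(x)=(-v_x)\cdot s(x)$ is again a section intertwining $\rho_X$ and $\rho_E$ with trivial factor set are precisely the details the paper leaves implicit.
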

Extensions satisfying any of (1), (2) or (3) in Corollary \ref{split} are called  \textit{split extensions}.

\begin{definition}
	Given a symmetric rack (respectively quandle) $(X,\rho_X)$ and an $(X,\rho_X)$-module $\mathscr{F}$, a map $\sigma:X \times X \rightarrow \bigsqcup_{x \in X} A_x$ is called a \textit{coboundary} if there exists a map $v:X \rightarrow \bigsqcup_{x \in X} A_x$ such that
$$ \sigma_{x,y}=\phi_{x,y}(v_x)+\psi_{x,y}(v_y)-v_{x\ast y} \quad \textrm{and} \quad	\eta_x(v_x)=v_{\rho_X(x)}$$
for all $x,y \in X$.
\end{definition}

\begin{theorem}\label{theorem ext cohom}
	Let $(X, \rho_X)$ be a symmetric rack and $\mathscr{F}=(A,\phi,\psi,\eta)$ be an $(X,\rho_X)$-module. Then there is an abelian group $\mathcal{H}^2_{\mathrm{SR}}(X,\mathscr{F})$ whose elements are in bijective correspondence with the set of equivalence classes of extensions of $(X, \rho_X)$ by $\mathscr{F}$. 
\end{theorem}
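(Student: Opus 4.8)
The plan is to realize $\mathcal{H}^2_{\mathrm{SR}}(X,\mathscr{F})$ as a quotient of an abelian group of cocycles by a subgroup of coboundaries, and then to translate the classification already obtained in Theorem \ref{extension} and Proposition \ref{equivalence} into this algebraic language. First I would let $Z^2_{\mathrm{SR}}(X,\mathscr{F})$ denote the set of all maps $\sigma:X\times X\to\bigsqcup_{x\in X}A_x$ with $\sigma_{x,y}\in A_{x\ast y}$ that satisfy the three identities (F1), (F2) and (F3) of Theorem \ref{extension}. Since each of these identities is \emph{linear} in $\sigma$---every term is the image of some value of $\sigma$ under one of the homomorphisms $\phi$, $\psi$, $\eta$---the set $Z^2_{\mathrm{SR}}(X,\mathscr{F})$ is closed under pointwise addition and negation, and hence is a subgroup of the abelian group $\prod_{x,y\in X}A_{x\ast y}$. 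In particular it is itself an abelian group, and the zero map $\sigma\equiv 0$ lies in it.

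Next I would introduce the coboundaries. Let $C^1_{\mathrm{SR}}(X,\mathscr{F})$ be the set of maps $v:X\to\bigsqcup_{x\in X}A_x$ with $v_x\in A_x$ satisfying $\eta_x(v_x)=v_{\rho_X(x)}$; as this condition is again linear, $C^1_{\mathrm{SR}}(X,\mathscr{F})$ is a subgroup of $\prod_{x\in X}A_x$. I define $\delta:C^1_{\mathrm{SR}}(X,\mathscr{F})\to\prod_{x,y\in X}A_{x\ast y}$ by $(\delta v)_{x,y}=\phi_{x,y}(v_x)+\psi_{x,y}(v_y)-v_{x\ast y}$; this is a group homomorphism, and its image $B^2_{\mathrm{SR}}(X,\mathscr{F})$ is exactly the set of coboundaries. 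The one genuine verification here is the containment $B^2_{\mathrm{SR}}(X,\mathscr{F})\subseteq Z^2_{\mathrm{SR}}(X,\mathscr{F})$, i.e. that each $\delta v$ satisfies (F1)--(F3). The cleanest way to see this is to recognise $\delta v$ as a factor set of a genuine extension: the split extension $E(\mathscr{F},0)$ admits the section $t(x)=(v_x,x)$, which is a valid section precisely because $\eta_x(v_x)=v_{\rho_X(x)}$ gives $\rho_{E(\mathscr{F},0)}(t(x))=t(\rho_X(x))$, and a direct computation of $t(x)\,\tilde{\star}\,t(y)$ shows its associated factor set equals $\delta v$. By Theorem \ref{extension} this factor set obeys (F1)--(F3), so $\delta v\in Z^2_{\mathrm{SR}}(X,\mathscr{F})$. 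Consequently $B^2_{\mathrm{SR}}(X,\mathscr{F})$ is a subgroup of $Z^2_{\mathrm{SR}}(X,\mathscr{F})$, and I set $\mathcal{H}^2_{\mathrm{SR}}(X,\mathscr{F}):=Z^2_{\mathrm{SR}}(X,\mathscr{F})/B^2_{\mathrm{SR}}(X,\mathscr{F})$, an abelian group.

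It then remains to produce the bijection. I would define $\Theta$ sending a class $[\sigma]\in\mathcal{H}^2_{\mathrm{SR}}(X,\mathscr{F})$ to the equivalence class of the extension $E(\mathscr{F},\sigma)$ built in Theorem \ref{extension}. This is well defined and injective simultaneously via Proposition \ref{equivalence}: two factor sets $\sigma,\tau\in Z^2_{\mathrm{SR}}(X,\mathscr{F})$ satisfy $[\sigma]=[\tau]$ in $\mathcal{H}^2_{\mathrm{SR}}(X,\mathscr{F})$ exactly when $\tau-\sigma=\delta v$ for some $v\in C^1_{\mathrm{SR}}(X,\mathscr{F})$, which is precisely condition $(2)$ of Proposition \ref{equivalence}, and that condition is equivalent to the equivalence of $E(\mathscr{F},\sigma)$ and $E(\mathscr{F},\tau)$ by condition $(1)$ of the same proposition. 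Surjectivity of $\Theta$ is exactly the last assertion of Theorem \ref{extension}: every extension of $(X,\rho_X)$ by $\mathscr{F}$ is equivalent to $E(\mathscr{F},\sigma)$ for a factor set $\sigma$, which satisfies (F1)--(F3) and hence lies in $Z^2_{\mathrm{SR}}(X,\mathscr{F})$. Thus $\Theta$ is a bijection from $\mathcal{H}^2_{\mathrm{SR}}(X,\mathscr{F})$ onto the set of equivalence classes of extensions.

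The main point to stress is that the hard classification content is already packaged in Theorem \ref{extension} and Proposition \ref{equivalence}; the remaining work is organisational. The only step requiring care is the closure/linearity bookkeeping making $Z^2_{\mathrm{SR}}(X,\mathscr{F})$ and $B^2_{\mathrm{SR}}(X,\mathscr{F})$ into groups, and in particular the containment $B^2_{\mathrm{SR}}(X,\mathscr{F})\subseteq Z^2_{\mathrm{SR}}(X,\mathscr{F})$, which I handle through the split-extension argument above rather than by grinding the module axioms through (F1)--(F3) by hand. The symmetric quandle analogue (Theorem \ref{theorem ext cohom quandle}) is obtained verbatim by additionally imposing the linear condition (F4), namely $\sigma_{x,x}=0$, which cuts out a subgroup and yields the corresponding cohomology group.
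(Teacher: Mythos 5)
Your proposal is correct and takes essentially the same approach as the paper: the group $\mathcal{H}^2_{\mathrm{SR}}(X,\mathscr{F})$ is defined as the quotient of the abelian group of maps satisfying (F1)--(F3) under pointwise addition by the subgroup of coboundaries, and the bijection with equivalence classes of extensions is obtained by combining Theorem \ref{extension} with Proposition \ref{equivalence}. The only difference is one of detail: the paper asserts without further comment that the coboundaries form a subgroup of the cocycles, whereas your split-extension argument (realizing $\delta v$ as the factor set of the section $t(x)=(v_x,x)$ of $E(\mathscr{F},0)$) supplies that verification explicitly.
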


\begin{proof}
Let $Z^2(X,\mathscr{F})$ consists of all factor sets $\sigma:X \times X \rightarrow \bigsqcup_{x \in X} A_x$. Define the binary operation $$(\sigma+\tau)_{x,y}:=\sigma_{x,y}+\tau_{x,y}$$
for all $x, y \in X$. In view of Theorem \ref{extension}, it follows that the conditions (F1)-(F3) are satisfied by $\sigma+\tau$, and hence this binary operation turns
$Z^2(X,\mathscr{F})$  into an abelian group with the trivial factor set as the identity element.  Further, the set $B^2(X,\mathscr{F})$ of all coboundaries forms a subgroup of $Z^2(X,\mathscr{F})$. Define $\mathcal{H}^2_{\mathrm{SR}}(X,\mathscr{F}):=Z^2(X,\mathscr{F})/B^2(X,\mathscr{F})$. Then, by Theorem \ref{extension} and Proposition \ref{equivalence}, $\mathcal{H}^2_{\mathrm{SR}}(X,\mathscr{F})$ is in bijective correspondence with the set of equivalence classes of extensions of $(X,\rho_X)$ by $\mathscr{F}$.  
\end{proof}

Using Theorem \ref{extension} and \cite[Proposition 2.5]{MR2155522}, we obtain an analogous result for symmetric quandles.

\begin{theorem}\label{theorem ext cohom quandle}
	Let $(X, \rho_X)$ be a symmetric quandle and $\mathscr{F}=(A,\phi,\psi,\eta)$ be an $(X,\rho_X)$-module. Then there is an abelian group $\mathcal{H}^2_{\mathrm{SQ}}(X,\mathscr{F})$ whose elements are in bijective correspondence with the set of equivalence classes of extensions of $(X, \rho_X)$ by $\mathscr{F}$. 
	\end{theorem}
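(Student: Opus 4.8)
The plan is to mirror the proof of Theorem \ref{theorem ext cohom} almost verbatim, incorporating the additional quandle condition $\mathrm{(F4)}$ supplied by Theorem \ref{extension}. First I would set $Z^2(X,\mathscr{F})$ to be the set of all factor sets $\sigma:X\times X\rightarrow \bigsqcup_{x\in X}A_x$, namely those maps satisfying $\mathrm{(F1)}$--$\mathrm{(F4)}$, and equip it with the pointwise operation $(\sigma+\tau)_{x,y}:=\sigma_{x,y}+\tau_{x,y}$. Since $\mathrm{(F1)}$--$\mathrm{(F3)}$ are closed under this operation by the argument already given for Theorem \ref{theorem ext cohom}, it only remains to observe that $\mathrm{(F4)}$ is preserved: if $\sigma_{x,x}=0$ and $\tau_{x,x}=0$, then $(\sigma+\tau)_{x,x}=0$. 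Hence $Z^2(X,\mathscr{F})$ is an abelian group with the trivial factor set as identity.

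Next I would take $B^2(X,\mathscr{F})$ to be the set of coboundaries. The single point that must be checked beyond the rack case is that every coboundary automatically satisfies $\mathrm{(F4)}$, so that $B^2(X,\mathscr{F})\subseteq Z^2(X,\mathscr{F})$. This is where the quandle hypothesis enters: for a coboundary $\sigma_{x,y}=\phi_{x,y}(v_x)+\psi_{x,y}(v_y)-v_{x\ast y}$ one computes $\sigma_{x,x}=\phi_{x,x}(v_x)+\psi_{x,x}(v_x)-v_{x\ast x}$, and since $x\ast x=x$ in a quandle while $\phi_{x,x}(v_x)+\psi_{x,x}(v_x)=v_x$ by the module axiom $\mathrm{(M9)}$, this reduces to $\sigma_{x,x}=v_x-v_x=0$. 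Thus $B^2(X,\mathscr{F})$ is a subgroup of $Z^2(X,\mathscr{F})$, and I would define $\mathcal{H}^2_{\mathrm{SQ}}(X,\mathscr{F}):=Z^2(X,\mathscr{F})/B^2(X,\mathscr{F})$.

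The bijection with equivalence classes of extensions then follows formally. By the quandle part of Theorem \ref{extension}, every extension of $(X,\rho_X)$ by $\mathscr{F}$ determines, via a choice of section, a factor set satisfying $\mathrm{(F1)}$--$\mathrm{(F4)}$, and conversely every such factor set realises an extension; by Proposition \ref{equivalence} two factor sets yield equivalent extensions precisely when they differ by a coboundary. Composing these two correspondences produces a bijection between the cosets comprising $\mathcal{H}^2_{\mathrm{SQ}}(X,\mathscr{F})$ and the equivalence classes of extensions. I do not anticipate any genuine obstacle: the only substantive new verification is the closure of $\mathrm{(F4)}$ under the group operation and its automatic validity for coboundaries, both of which hinge on the quandle identity $x\ast x=x$ together with $\mathrm{(M9)}$, exactly as signalled by the reference \cite[Proposition 2.5]{MR2155522}.
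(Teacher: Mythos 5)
Your proposal is correct and follows exactly the route the paper takes: the paper's own proof of Theorem \ref{theorem ext cohom quandle} is a one-line reduction to Theorem \ref{extension} and the argument of Theorem \ref{theorem ext cohom}, which is precisely what you carry out, with the only genuinely new verifications being closure of $\mathrm{(F4)}$ under addition and the fact that coboundaries satisfy $\mathrm{(F4)}$ via $x\ast x=x$ and axiom $\mathrm{(M9)}$. Your writeup is in fact more detailed than the paper's, but the approach is the same.
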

\medskip

%%%%%%%%%%%%%%%%%%%%%%%%%%%%%%%%%
%End of Section
%%%%%%%%%%%%%%%%%%%%%%%%%%%%%%%%%

\section{Generalized (co)homology for symmetric racks and symmetric quandles}\label{section generalized cohomology}

In this section, we formulate a (co)homology theory for symmetric racks (respectively quandles) $(X,\rho_X)$ with coefficients derived from homogeneous $(X,\rho_X)$-modules. To initiate this formulation, we first extract the fundamental axioms that define $(X,\rho_X)$-modules. 

\begin{definition}
		The \textit{symmetric rack algebra} of a symmetric rack $(X, \rho_X)$ is the associative $\mathbb{Z}$-algebra, denoted by $\mathbb{Z}(X,\rho_X)$, which is generated by the set $\{\phi_{x,y}^{\pm 1}, \, \psi_{x,y}, \,\eta_x \mid x,y \in X \}$ and admit the following defining relations: 
		\begin{enumerate}\label{axiom}
			\item[(A1)] $\phi_{x,y} \phi_{x,y}^{-1}=\phi_{x,y}^{-1}\phi_{x,y}=1$,
			\item[(A2)] $\phi_{x \ast y,z} \phi_{x,y}=\phi_{x \ast z, y \ast z}\phi_{x,z}$,
			\item[(A3)] $\phi_{x \ast y,z}\psi_{x,y}=\psi_{x \ast z, y \ast z}\phi_{y,z}$,
			\item[(A4)] $\eta_{\rho_X(x)} \eta_x=1$,
			\item[(A5)] $\phi_{\rho_X(x),y} \eta_x= \eta_{x\ast y} \phi_{x,y}$,
			\item[(A6)] $\psi_{\rho_X(x),y}= \eta_{x \ast y} \psi_{x,y}$,
			\item[(A7)] $\phi_{x \ast \rho_X(y),y} \phi_{x, \rho_X(y)}=1$,
			\item[(A8)] $\psi_{x \ast y,z}=\phi_{x \ast z, y \ast z} \psi_{x,z}+\psi_{x \ast z, y \ast z} \psi_{y,z}$,
			\item[(A9)] $\phi_{x \ast \rho_X(y),y} \psi_{x, \rho_X(y)} \eta_y =- \psi_{x \ast \rho_X(y), y}$,
		\end{enumerate}
	for all $x,y,z \in X.$
	\end{definition}

\begin{definition}
The \textit{symmetric quandle algebra} of a symmetric quandle $(X, \rho_X)$ is the symmetric rack algebra of the underlying symmetric rack which satisfies the additional relation
		\begin{enumerate}\label{Qaxiom}
			\item [(A10)] $\phi_{x,x}+\psi_{x,x}=1$
		\end{enumerate}
	for all $x \in X.$
\end{definition}

\begin{remark}\label{Module-algebra}
	Let $(X,\rho_X)$ be a symmetric rack (respectively quandle). Then the category of homogeneous $(X,\rho_X)$-modules is equivalent to the category of modules over $\mathbb{Z}(X,\rho_X)$. Thus,  these are abelian with enough injective and projective objects.
\end{remark}

Let $(X,\rho_X)$ be a symmetric rack. For each $n\geq0$, define $C_n(X,\rho_X):=\mathbb{Z}(X,\rho_X)X^n,~\ie$, the free left $\mathbb{Z}(X,\rho_X)$-module with basis $X^n$, where $X^0=\{p\}$ for some fixed $p \in X$. For the sake of convenience, we fix the notation 
$$[x_1 \cdots x_n]:=(((x_1 \ast x_2) \ast x_3) \ast \cdots \ast x_n)$$ for any $(x_1, x_2, \ldots, x_n) \in X^n$. For each $n \geq 2$, let
$\partial_n:C_{n}(X,\rho_X) \rightarrow C_{n-1}(X, \rho_X)$ be the $\mathbb{Z}(X,\rho_X)$-linear map defined on the basis as
 \begin{eqnarray*}
\partial_n \big((x_1, \ldots, x_n) \big) &= &\sum_{j=2}^{n}(-1)^j (-1)^n \phi_{[x_1 \cdots \widehat{x_j} \cdots x_n],[x_j \cdots  x_n]}
  	(x_1,\ldots, \widehat{x_j}, \ldots , x_n)\\
  	& - &\sum_{j=2}^{n}(-1)^j (-1)^n (x_1 \ast x_j,\ldots,x_{j-1} \ast x_j, x_{j+1}, \ldots , x_n)\\
  	& + &(-1)^n \psi_{[x_1 \widehat{x_2}x_3 \cdots x_n],[x_2 x_3 \cdots x_n]}(x_2,\ldots , x_n),
\end{eqnarray*}
and 
$$\partial_1(x)=-\psi_{x \ast^{-1} p,p}(p).$$

Using conditions (A2), (A3), and (A8), it has been proved in \cite[Lemma 4.2]{MR1994219} that $\partial_{n-1} \, \partial_n=1$ for all $n \ge 1$.  Thus, $(C_n(X,\rho_X),\partial)$ is a chain complex. We next define a subcomplex of this chain complex.
\par   

For each $n \ge 1$, consider the left $\mathbb{Z}(X,\rho_X)$-submodule $D_n^{SR}(X,\rho_X)$ of $C_n(X,\rho_X)$ generated by $U_n \cup V_n$, where
$$U_n= \big\{ \eta_{[x_1 \cdots x_n]}(x_1, \ldots, x_n)- (\rho_X(x_1),x_2, \ldots ,x_n ) \mid (x_1, x_2, \ldots, x_n) \in X^n \big\},$$
\begin{eqnarray*}
V_n &=& \bigcup_{i=2}^n \big\{\big(\phi_{[x_1 \cdots \widehat{x_i} \cdots x_n],[x_i \cdots x_n]} (x_1 \ast x_i, \ldots , x_{i-1} \ast x_i, \rho_X(x_i),x_{i+1}, \ldots, x_n)+(x_1, \ldots, x_n) \big) \mid\\
&&  (x_1, x_2, \ldots, x_n) \in X^n \big\}.
\end{eqnarray*}
for $n \ge 2$ and $V_1=\emptyset$.

\begin{lemma}\label{partI}
Let $(X,\rho_X)$ be a symmetric rack. Then $\partial_n \big(U_n \big) \subset D_{n-1}^{SR}(X,\rho_X)$ for each $n \ge 2$.
  	\end{lemma}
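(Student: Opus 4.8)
The plan is to exploit the $\mathbb{Z}(X,\rho_X)$-linearity of $\partial_n$ and reduce the claim to a termwise comparison. Fix a tuple $(x_1,\dots,x_n)\in X^n$, write $w=[x_1\cdots x_n]$, and let $g=\eta_w(x_1,\dots,x_n)-(\rho_X(x_1),x_2,\dots,x_n)$ be the corresponding generator of $U_n$. Since $\eta_w\in\mathbb{Z}(X,\rho_X)$ and $\partial_n$ is linear over this algebra, I would first write
$$\partial_n(g)=\eta_w\,\partial_n\big((x_1,\dots,x_n)\big)-\partial_n\big((\rho_X(x_1),x_2,\dots,x_n)\big),$$
and then compare the two boundaries term by term across the three pieces (the $\phi$-sum, the $\ast$-sum, and the $\psi$-term) appearing in the definition of $\partial_n$.

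Before comparing, I would record two elementary identities. First, from (S2) one has $\rho_X(x)\ast y=\rho_X(x\ast y)$, whence by induction $[\rho_X(x_1)\,x_2\cdots x_k]=\rho_X([x_1\cdots x_k])$ for every $k$; this lets me rewrite all the bracket subscripts and all the first-coordinate entries produced by $\partial_n\big((\rho_X(x_1),x_2,\dots,x_n)\big)$ as $\rho_X$ applied to the corresponding data for $(x_1,\dots,x_n)$. Second, repeated use of the self-distributive rack axiom gives the ``confluence'' identities $[x_1\cdots\widehat{x_j}\cdots x_n]\ast[x_j\cdots x_n]=w$ and $[x_1\ast x_j,\dots,x_{j-1}\ast x_j,x_{j+1},\dots,x_n]=w$; these are what match the outer $\eta_w$ with the intended $U_{n-1}$ generators.

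With these in hand the three pieces resolve as follows. For the $\phi$-sum, writing $u=[x_1\cdots\widehat{x_j}\cdots x_n]$ and $v=[x_j\cdots x_n]$, the first confluence identity gives $u\ast v=w$, so (A5) yields $\eta_w\phi_{u,v}=\eta_{u\ast v}\phi_{u,v}=\phi_{\rho_X(u),v}\eta_u$; after using the good-involution identity to recognize the matching $\mathbf{y}$-term as $\phi_{\rho_X(u),v}$ applied to $(\rho_X(x_1),x_2,\dots,\widehat{x_j},\dots,x_n)$, the $j$-th term collapses to $\phi_{\rho_X(u),v}$ applied to $\eta_u(x_1,\dots,\widehat{x_j},\dots,x_n)-(\rho_X(x_1),x_2,\dots,\widehat{x_j},\dots,x_n)$, which is exactly $\phi_{\rho_X(u),v}$ times a generator of $U_{n-1}$, hence in $D_{n-1}^{SR}(X,\rho_X)$. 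For the $\ast$-sum, the second confluence identity shows the bracket of the truncated tuple equals $w$, so each term becomes $\eta_w$ applied to that tuple minus the same tuple with $\rho_X$ on its first coordinate --- again a $U_{n-1}$ generator. For the $\psi$-term, with $u'=[x_1\widehat{x_2}x_3\cdots x_n]$ and $v'=[x_2\cdots x_n]$ one has $u'\ast v'=w$, and (A6) gives $\eta_w\psi_{u',v'}=\psi_{\rho_X(u'),v'}$; hence the two $\psi$-contributions cancel and this term vanishes identically. Summing, $\partial_n(g)$ is a $\mathbb{Z}(X,\rho_X)$-combination of $U_{n-1}$ generators, so it lies in $D_{n-1}^{SR}(X,\rho_X)$.

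I expect the main obstacle to be the bookkeeping in matching subscripts rather than any conceptual difficulty: one must check that every bracket index forced by the outer $\eta_w$ coincides with the bracket attached to the relevant $U_{n-1}$ generator, and that applying (S2) turns the first entry $\rho_X(x_1)$ into precisely $\rho_X(z_1)$ for the leading letter $z_1$ of each truncated tuple. The two preliminary identities are exactly what make these matchings automatic, so once they are established the remainder is a routine, if lengthy, termwise verification; the only genuinely load-bearing algebra relations are (A5) for the $\phi$-sum and (A6) for the cancellation of the $\psi$-term.
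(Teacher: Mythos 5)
Your proposal is correct and follows essentially the same route as the paper's own proof: expand $\partial_n$ on a generator of $U_n$ by $\mathbb{Z}(X,\rho_X)$-linearity, use the identities $[\rho_X(x_1)x_2\cdots x_k]=\rho_X([x_1\cdots x_k])$ and $[x_1\cdots \widehat{x_j}\cdots x_n]\ast[x_j\cdots x_n]=[x_1\cdots x_n]$, then handle the $\phi$-sum via (A5), the $\ast$-sum via the second confluence identity, and cancel the $\psi$-terms via (A6). In fact your treatment of the $\ast$-sum is slightly cleaner than the paper's, which groups that term with a ``$+$'' where a ``$-$'' is needed for it to be a $U_{n-1}$-generator (a sign typo that your version avoids).
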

\begin{proof}
Using the fact that $\rho_X$ is a good involution and the rack identity 
\begin{equation}\label{consequence rack identity}
[x_1 \cdots \widehat{x_j} \cdots x_n] \ast [x_j \cdots  x_n]= [x_1 \cdots x_n]
\end{equation}
for each $(x_1, x_2, \ldots, x_n) \in X^n$, it follows from the condition \hyperref[axiom]{(A5)} that
\begin{equation}\label{lemma eq 1}
\eta_{\underbrace{[x_1 \cdots x_n]}_{x \ast y}} \phi_{\underbrace{[x_1 \cdots \widehat{x_j} \cdots x_n]}_{x},\underbrace{[x_j \cdots  x_n]}_{y}} = \phi_{\underbrace{[\rho_X(x_1) \cdots \widehat{x_j} \cdots x_n]}_{\rho_X(x)},\underbrace{[x_j \cdots  x_n]}_{y}} \eta_{\underbrace{[x_1 \cdots \widehat{x_j} \cdots x_n]}_{x}}.
\end{equation}
Similarly, it follows from \hyperref[axiom]{(A6)} that
\begin{equation}\label{lemma eq 2}
\eta_{\underbrace{[x_1 \cdots x_n]}_{x \ast y}} \psi_{\underbrace{[x_1 \widehat{x_2}x_3 \cdots x_n]}_{x},\underbrace{[x_2 x_3 \cdots x_n]}_{y}}=\psi_{\underbrace{[\rho_X(x_1) \widehat{x_2}x_3 \cdots x_n]}_{\rho_X(x)},\underbrace{[x_2 x_3 \cdots x_n]}_{y}}.
\end{equation}
Then, we have
\begin{small}
\begin{eqnarray*}
&& \partial_n \big(\eta_{[x_1 \cdots x_n]}(x_1, \ldots, x_n)-(\rho_X(x_1),x_2, \ldots ,x_n) \big)\\
  		&=&\eta_{[x_1 \cdots x_n]}\partial_n(x_1, \ldots, x_n)-\partial_n (\rho_X\big(x_1),x_2, \ldots ,x_n), \quad \textrm{since $\partial_n$ is $\mathbb{Z}(X,\rho_X)$-linear}\\
  		& = &\sum_{j=2}^{n}(-1)^j (-1)^n \eta_{[x_1 \cdots x_n]} \phi_{[x_1 \cdots \widehat{x_j} \cdots x_n],[x_j \cdots  x_n]}
  		(x_1,\ldots, \widehat{x_j}, \ldots , x_n)\\
  		& - &\sum_{j=2}^{n}(-1)^j (-1)^n \eta_{[x_1 \cdots x_n]}
  		(x_1 \ast x_j,\ldots,x_{j-1} \ast x_j, x_{j+1}, \ldots , x_n)\\
  		& + &(-1)^n \eta_{[x_1 \cdots x_n]} \psi_{[x_1 \widehat{x_2}x_3 \cdots x_n],[x_2 x_3 \cdots x_n]}(x_2,\ldots , x_n)\\
  		& - &\sum_{j=2}^{n}(-1)^j (-1)^n \phi_{[\rho_X(x_1) \cdots \widehat{x_j} \cdots x_n],[x_j \cdots  x_n]}
  		(\rho_X(x_1),\ldots, \widehat{x_j}, \ldots , x_n)\\
  		& + &\sum_{j=2}^{n}(-1)^j (-1)^n
  		(\rho_X(x_1) \ast x_j,\ldots,x_{j-1} \ast x_j, x_{j+1}, \ldots , x_n)\\
  		& - &(-1)^n \psi_{[\rho_X(x_1) \widehat{x_2}x_3 \cdots x_n],[x_2 x_3 \cdots x_n]}(x_2,\ldots , x_n)\\
  		& = &\sum_{j=2}^{n}(-1)^j (-1)^n \Big(\eta_{[x_1 \cdots x_n]} \phi_{[x_1 \cdots \widehat{x_j} \cdots x_n],[x_j \cdots  x_n]}
  		(x_1,\ldots, \widehat{x_j}, \ldots , x_n)\\
  		& - & \phi_{[\rho_X(x_1) \cdots \widehat{x_j} \cdots x_n],[x_j \cdots  x_n]}
  		(\rho_X(x_1),\ldots, \widehat{x_j}, \ldots , x_n) \Big)\\
  		& - &\sum_{j=2}^{n}(-1)^j (-1)^n \Big( \eta_{[x_1 \cdots x_n]}
  		(x_1 \ast x_j,\ldots,x_{j-1} \ast x_j, x_{j+1}, \ldots , x_n)\\
  		& + &(\rho_X(x_1) \ast x_j,\ldots,x_{j-1} \ast x_j, x_{j+1}, \ldots , x_n) \Big)\\
  		& + &(-1)^n \Big(\eta_{[x_1 \cdots x_n]} \psi_{[x_1 \widehat{x_2}x_3 \cdots x_n],[x_2 x_3 \cdots x_n]}(x_2,\ldots , x_n)
  		- \psi_{[\rho_X(x_1) \widehat{x_2}x_3 \cdots x_n],[x_2 x_3 \cdots x_n]}(x_2,\ldots , x_n) \Big)\\
  		&=&\sum_{j=2}^{n}(-1)^j (-1)^n \phi_{[\rho_X(x_1) \cdots \widehat{x_j} \cdots x_n],[x_j \cdots  x_n]} \Big(\eta_{[x_1 \cdots \widehat{x_j} \cdots x_n]}
  		(x_1,\ldots, \widehat{x_j}, \ldots , x_n) -  (\rho_X(x_1),\ldots, \widehat{x_j}, \ldots , x_n)\Big)\\
  		& - &\sum_{j=2}^{n}(-1)^j (-1)^n \Big( \eta_{[x_1 \cdots x_n]} (x_1 \ast x_j,\ldots,x_{j-1} \ast x_j, x_{j+1}, \ldots , x_n) +(\rho_X(x_1) \ast x_j,\ldots,x_{j-1} \ast x_j, x_{j+1}, \ldots , x_n)\Big),\\
		&& \quad \textrm{using \eqref{lemma eq 1} and \eqref{lemma eq 2}}. 
\end{eqnarray*}
\end{small}
Hence, $\partial_n \big(U_n \big)$ lies in $D_{n-1}^{SR}(X,\rho_X)$. 
 \end{proof}	

  \begin{lemma}\label{partII}
  	Let $(X,\rho_X)$ be a symmetric rack. Then $\partial_n \big(V_n \big) \subset D_{n-1}^{SR}(X,\rho_X)$ for each $n \ge 2$.
  \end{lemma}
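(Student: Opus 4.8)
The plan is to verify the inclusion by a direct computation on a generating element of $V_n$, parallel in spirit to the proof of Lemma \ref{partI} but drawing on the ``inverse action'' axioms \hyperref[axiom]{(A7)} and \hyperref[axiom]{(A9)} in place of \hyperref[axiom]{(A5)} and \hyperref[axiom]{(A6)}. Fix $(x_1,\dots,x_n)\in X^n$ and an index $i$ with $2\le i\le n$, and abbreviate $\Phi_i:=\phi_{[x_1 \cdots \widehat{x_i} \cdots x_n],[x_i \cdots  x_n]}$ and $w_i:=(x_1\ast x_i,\dots,x_{i-1}\ast x_i,\rho_X(x_i),x_{i+1},\dots,x_n)$, so that the corresponding generator of $V_n$ is $\Phi_i\,w_i+(x_1,\dots,x_n)$. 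Since $\partial_n$ is $\mathbb{Z}(X,\rho_X)$-linear, we have $\partial_n\big(\Phi_i\,w_i+(x_1,\dots,x_n)\big)=\Phi_i\,\partial_n(w_i)+\partial_n(x_1,\dots,x_n)$, and I would expand both summands using the defining formula for $\partial_n$, organising the resulting terms according to the deletion index $j$ and its position relative to the distinguished index $i$.

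First I would dispose of the terms with $j=i$. Deleting the $i$-th slot of $w_i$ removes the entry $\rho_X(x_i)$; in the shifted face the preceding entries become $(x_k\ast x_i)\ast\rho_X(x_i)=x_k$ by the good involution axiom (S3), while in the $\phi$-face the relevant composite coefficient collapses to the identity by \hyperref[axiom]{(A7)}, after using self-distributivity to rewrite $[(x_1\ast x_i)\cdots(x_{i-1}\ast x_i)]=[x_1\cdots x_{i-1}]\ast x_i$ and then reconciling the iterated products via \eqref{consequence rack identity}. A short sign check then shows that these two $j=i$ contributions cancel exactly against the $j=i$ terms of $\partial_n(x_1,\dots,x_n)$, so the $j=i$ part contributes nothing.

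For $j\ne i$ I would pair, for each such $j$, the $\phi$-face (respectively shifted-face) term coming from $\Phi_i\,\partial_n(w_i)$, which retains the entry $\rho_X(x_i)$, with the corresponding term coming from $\partial_n(x_1,\dots,x_n)$, which is plain. Using the rack identity \eqref{consequence rack identity} to identify the relevant iterated products, together with \hyperref[axiom]{(A2)} to propagate the outer coefficient $\Phi_i$ through the inner $\phi$'s and \hyperref[axiom]{(A3)} to move $\phi$ past the $\psi$ arising from the shifted faces, each such pair becomes precisely a $\mathbb{Z}(X,\rho_X)$-multiple of a generator of $V_{n-1}$, whose distinguished $\rho_X$-slot sits in position $i$ or $i-1$ according as $j>i$ or $j<i$ (in either case a legitimate position $\ge 2$, since the face sums run over $j\ge 2$). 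Thus all $j\ne i$ terms already lie in $D_{n-1}^{SR}$.

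The remaining obstacle, and the genuinely new ingredient compared with Lemma \ref{partI}, is the single $\psi$-term, which deletes the first slot. For $i\ge 3$ the surviving entry $\rho_X(x_i)$ lands in position $i-1\ge 2$ after this deletion, and the two $\psi$-terms again combine, via \hyperref[axiom]{(A3)}, into a $\psi$-multiple of a generator of $V_{n-1}$. When $i=2$, however, $\rho_X(x_2)$ is pushed to the first position, where the $V_{n-1}$ pattern no longer applies; here I would invoke \hyperref[axiom]{(A9)} to rewrite $\Phi_2\,\psi_{x_1\ast x_2,\,\rho_X(x_2)}=-\,\psi_{x_1,x_2}\,\eta_{\rho_X(x_2)}$, using \hyperref[axiom]{(A4)} to invert $\eta$, so that the two $\psi$-terms collapse to $-\psi_{x_1,x_2}\big(\eta_{[\rho_X(x_2)x_3\cdots x_n]}(\rho_X(x_2),x_3,\dots,x_n)-(x_2,x_3,\dots,x_n)\big)$, which is a $\psi$-multiple of a generator of $U_{n-1}$. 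This is exactly the step that forces the image to meet the $U$-part of $D_{n-1}^{SR}$, and matching all signs and subscripts precisely here is the delicate point; collecting the three cases then yields $\partial_n\big(V_n\big)\subset D_{n-1}^{SR}$.
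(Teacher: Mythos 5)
Your proposal is correct and follows essentially the same route as the paper's proof: expand $\partial_n$ on both summands of the generator $\Phi_i\,w_i+(x_1,\dots,x_n)$, cancel the $j=i$ faces exactly via \hyperref[axiom]{(A7)} and the good-involution identity $(x_k\ast x_i)\ast\rho_X(x_i)=x_k$, combine the $j\neq i$ faces into $\phi$-multiples of $V_{n-1}$-generators via \hyperref[axiom]{(A2)} and coefficient identification, and handle the lone $\psi$-face by \hyperref[axiom]{(A3)} (giving a $\psi$-multiple of a $V_{n-1}$-generator when $i\ge 3$) and by \hyperref[axiom]{(A9)} (giving a multiple of a $U_{n-1}$-generator when $i=2$), exactly as in the paper's pairings 1A--1G with 2A--2G. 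The only blemish is the stray remark that \hyperref[axiom]{(A3)} is needed ``to move $\phi$ past the $\psi$ arising from the shifted faces''---the shifted faces carry no $\psi$---but since you treat the unique $\psi$-term correctly and separately afterwards, this does not affect the argument.
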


 \begin{proof}
  	For a fixed $2 \le i \le n$, we have
\begin{small}
\begin{eqnarray*}
&&   	\partial_n \big(\phi_{[x_1 \cdots \widehat{x_i} \cdots x_n],[x_i \cdots x_n]}(x_1 \ast x_i, \ldots , x_{i-1} \ast x_i, \rho_X(x_i),x_{i+1}, \ldots, x_n) \big)\\
  		& = & \underbrace{ \sum_{j=2}^{i-1}(-1)^j (-1)^n\phi_{[x_1 \cdots \widehat{x_i} \cdots x_n],[x_i \cdots x_n]} \phi_{[(x_1 \ast x_i)\cdots \widehat{(x_j \ast x_i)} \cdots  (x_{i-1} \ast x_i) \rho_X(x_i)x_{i+1} \cdots x_n],[(x_j \ast x_i) \cdots (x_{i-1} \ast x_i) \rho_X{(x_i)}x_{i+1}\cdots  x_n]}}_{1A}\\
  		& &\underbrace{\Big(x_1 \ast x_i,\ldots, \widehat{x_j \ast x_i}, \ldots , x_{i-1} \ast x_i, \rho_X(x_i),x_{i+1}, \ldots, x_n \Big)}_{1A}\\
  		& + & \underbrace{(-1)^i (-1)^n\phi_{[x_1 \cdots \widehat{x_i} \cdots x_n],[x_i \cdots  x_n]} \phi_{[(x_1 \ast x_i)\cdots \cdots  (x_{i-1} \ast x_i) \widehat{\rho_X(x_i)}x_{i+1} \cdots x_n],[\rho_X{(x_i)}x_{i+1}\cdots  x_n]}}_{1B}\\
  		& &\underbrace{\Big(x_1 \ast x_i,\ldots , x_{i-1} \ast x_i, \widehat{\rho_X(x_i)},x_{i+1}, \ldots, x_n \Big)}_{1B}\\
  		&+ &\underbrace{\sum_{j=i+1}^{n}(-1)^j (-1)^n \phi_{[x_1 \cdots \widehat{x_i} \cdots x_n],[x_i \cdots  x_n]} \phi_{[(x_1 \ast x_i)\cdots  (x_{i-1} \ast x_i) \rho_X(x_i)   x_{i+1} \cdots \widehat{x_j} \cdots x_n],[(x_j \cdots  x_n]}}_{1C}\\
  		& &\underbrace{ \Big(x_1 \ast x_i,\ldots , x_{i-1} \ast x_i, \rho_X(x_i), x_{i+1}, \ldots, \widehat{x_j}, \ldots, x_n \Big)}_{1C}\\
  		& - & \underbrace{ \sum_{j=2}^{i-1}(-1)^j (-1)^n\phi_{[x_1 \cdots \widehat{x_i} \cdots x_n],[x_i \cdots  x_n]}}_{1D} \\
  		& &\underbrace{\Big((x_1 \ast x_i)\ast(x_j \ast x_i),\ldots, (x_{j-1} \ast x_i) \ast (x_j \ast x_i), \widehat{x_j \ast x_i}, x_{j+1} \ast x_i, \ldots , x_{i-1} \ast x_i, \rho_X(x_i),x_{i+1}, \ldots, x_n \Big)}_{1D}\\
  		& - &\underbrace{ (-1)^i (-1)^n \phi_{[x_1 \cdots \widehat{x_i} \cdots x_n],[x_i \cdots  x_n]}}_{1E}\\
  		& &\underbrace{\Big((x_1*x_i)*\rho_X(x_i) ,\ldots , (x_{i-1} *x_i)*\rho_X(x_i), \widehat{\rho_X(x_i)},x_{i+1}, \ldots, x_n \Big)}_{1E}\\
  		& - &\underbrace{ \sum_{j=i+1}^{n}(-1)^j (-1)^n  \phi_{[x_1 \cdots \widehat{x_i} \cdots x_n],[x_i \cdots  x_n]}}_{1F}\\
  		& &\underbrace{\Big( (x_1 \ast x_i )\ast x_j,\ldots , (x_{i-1} \ast x_i )\ast x_j, \rho_X(x_i \ast x_j), x_{i+1}\ast x_j, \ldots,{x_{j-1} \ast x_j}, \widehat{x_j}, x_{j+1}, \ldots, x_n \Big)}_{1F}\\	 	
  		& + &\underbrace{(-1)^n \phi_{[x_1 \cdots \widehat{x_i} \cdots x_n],[x_i \cdots  x_n]} \psi_{[(x_1 \ast x_i)\widehat{(x_2 \ast x_i)}\cdots  (x_{i-1} \ast x_i) \rho_X(x_i)x_{i+1} \cdots x_n],[(x_2 \ast x_i)\cdots \cdots  (x_{i-1} \ast x_i) \rho_X(x_i)x_{i+1} \cdots x_n]}}_{1G}\\
  		& &\underbrace{\Big(x_2 \ast x_i ,\ldots , x_{i-1} \ast x_i , \rho_X(x_i), x_{i+1}, \ldots, x_n \Big)}_{1G}
  	\end{eqnarray*} 
\end{small}
and 
\begin{eqnarray*}
&&   	\partial_n \big((x_1, \ldots , x_n) \big)\\
 		& = &\underbrace{ \sum_{j=2}^{i-1}(-1)^j (-1)^n \phi_{[x_1 \cdots \widehat{x_j} \cdots x_n],[x_j \cdots  x_n]}
  			(x_1,\ldots, \widehat{x_j}, \ldots , x_n)}_{2A}\\
  		& + &\underbrace{(-1)^i (-1)^n \phi_{[x_1 \cdots \widehat{x_i} \cdots x_n],[x_i \cdots  x_n]}
  			(x_1,\ldots, x_{i-1}, \widehat{x_i},x_{i+1}, \ldots, x_n)}_{2B}\\
  		&+ &\underbrace{\sum_{j=i+1}^{n}(-1)^j (-1)^n \phi_{[x_1 \cdots \widehat{x_j} \cdots x_n],[x_j \cdots  x_n]}
  			(x_1 , \ldots, \widehat{x_j}, \ldots, x_n)}_{2C}\\
  		& - &\underbrace{\sum_{j=2}^{i-1}(-1)^j (-1)^n
  		(x_1 \ast x_j,\ldots,x_{j-1} \ast x_j, x_{j+1}, \ldots , x_n)}_{2D}\\
  		& - &\underbrace{(-1)^i (-1)^n (x_1 \ast x_i,\ldots , x_{i-1} \ast x_i,x_{i+1}, \ldots, x_n)}_{2E}\\
  		& - &	\underbrace{\sum_{j=i+1}^{n}(-1)^j (-1)^n
  	(x_1 \ast x_j,\ldots , x_{j-1} \ast x_j, x_{j+1}, \ldots, x_n)}_{2F}\\
  		& + &\underbrace{(-1)^n \psi_{[x_1 \widehat{x_2}x_3 \cdots x_n],[x_2 x_3 \cdots x_n]}(x_2,\ldots , x_n)}_{2G}.
  	\end{eqnarray*} 
We see that
  	\begin{eqnarray*}
&& 1A + 2A \\
&= &\sum_{j=2}^{i-1}(-1)^j (-1)^n \Big(\phi_{[x_1 \cdots \widehat{x_i} \cdots x_n],[x_i \cdots  x_n]} \phi_{[(x_1 \ast x_i)\cdots \widehat{(x_j \ast x_i)} \cdots  (x_{i-1} \ast x_i) \rho_X(x_i)x_{i+1} \cdots x_n],[(x_j \ast x_i) \cdots (x_{i-1} \ast x_i) \rho_X{(x_i)}x_{i+1}\cdots  x_n]}\\
  		& &(x_1 \ast x_i,\ldots, \widehat{x_j \ast x_i}, \ldots , x_{i-1} \ast x_i, \rho_X(x_i),x_{i+1}, \ldots, x_n) +
  		\phi_{[x_1 \cdots \widehat{x_j} \cdots x_n],[x_j \cdots  x_n]}
  		(x_1,\ldots, \widehat{x_j}, \ldots , x_n) \Big).
  	\end{eqnarray*}
  	Then, using \eqref{consequence rack identity}, the fact that $\rho_X$ is a good involution and the condition \hyperref[axiom]{(A2)}, we see that
\begin{eqnarray*}
 && \phi_{\underbrace{[x_1 \cdots \widehat{x_i} \cdots x_n]}_{x \ast y},\underbrace{[x_i \cdots  x_n]}_{z}}\\
&&  \phi_{\underbrace{[(x_1 \ast x_i) \cdots \widehat{(x_j \ast x_i)} \cdots  (x_{i-1} \ast x_i) \rho_X(x_i) x_{i+1} \cdots x_n]}_{x},\underbrace{[(x_j \ast x_i) \cdots (x_{i-1} \ast x_i) \rho_X{(x_i)}x_{i+1}\cdots  x_n]}_{y}}\\
&=& \phi_{\underbrace{[x_1 \cdots \widehat{x_j} \cdots x_n]}_{x \ast z},\underbrace{[x_j \cdots x_n]}_{y \ast z}} \phi_{\underbrace{[x_1 \cdots \widehat{x_j} \cdots \widehat{x_i} \cdots x_n]}_{x},\underbrace{[x_i \cdots  x_n]}_{z}}.
\end{eqnarray*}
Thus, the term 1A + 2A lies in $D_{n-1}^{SR}(X,\rho_X)$.  Next, we consider
\begin{eqnarray*}
&&  1B + 2E\\
&=& (-1)^i (-1)^n \phi_{[x_1 \cdots \widehat{x_i} \cdots x_n],[x_i \cdots  x_n]} \phi_{[(x_1 \ast x_i)\cdots  (x_{i-1} \ast x_i) \widehat{\rho_X(x_i)}x_{i+1} \cdots x_n],[\rho_X{(x_i)}x_{i+1}\cdots  x_n]}\\
&& (x_1 \ast x_i,\ldots , x_{i-1} \ast x_i, \widehat{\rho_X(x_i)},x_{i+1}, \ldots, x_n)- (-1)^i (-1)^n (x_1 \ast x_i,\ldots , x_{i-1} \ast x_i,x_{i+1}, \ldots, x_n).
\end{eqnarray*}
Due to condition \hyperref[axiom]{(A7)}, we have
  	$$\phi_{\underbrace{[x_1 \cdots \widehat{x_i} \cdots x_n]}_{x \ast \rho_X(y)},\underbrace{[x_i \cdots  x_n]}_{y}} \phi_{\underbrace{[(x_1 \ast x_i)\cdots \cdots  (x_{i-1} \ast x_i) \widehat{\rho_X(x_i)}x_{i+1} \cdots x_n]}_{x},\underbrace{[\rho_X{(x_i)}x_{i+1}\cdots  x_n]}_{\rho_X(y)}}=\id,$$
and hence $1B + 2E =0$. Consider the term
\begin{eqnarray*}
&& 1C + 2C\\
& =&\sum_{j=i+1}^{n}(-1)^j (-1)^n \Big(\phi_{[x_1 \cdots \widehat{x_i} \cdots x_n],[x_i \cdots  x_n]} \phi_{[(x_1 \ast x_i)\cdots  (x_{i-1} \ast x_i) \rho_X(x_i) x_{i+1} \cdots  \widehat{x_j} \cdots x_n],[x_j \cdots  x_n]}\\
  		& &(x_1 \ast x_i,\ldots , x_{i-1} \ast x_i, \rho_X(x_i), x_{i+1}, \ldots, \widehat{x_j}, \ldots, x_n)
  		+\phi_{[x_1 \cdots \widehat{x_j} \cdots x_n],[x_j \cdots x_n]}
  		(x_1 , \ldots, \widehat{x_j}, \ldots, x_n) \Big).
  	\end{eqnarray*}
Using the condition \hyperref[axiom]{(A2)}, we see that
\begin{eqnarray*}
&& \phi_{\underbrace{[x_1 \cdots \widehat{x_i} \cdots x_j \cdots x_n]}_{x \ast z},\underbrace{[x_i \cdots x_j \cdots  x_n]}_{y \ast z}}
\phi_{\underbrace{[(x_1 \ast x_i) \cdots  (x_{i-1} \ast x_i) \rho_X(x_i) x_{i+1} \cdots \widehat{x_j} \cdots x_n]}_{x},\underbrace{[x_j \cdots  x_n]}_{z}}\\
&=& \phi_{\underbrace{[x_1 \cdots \widehat{x_j} \cdots x_n]}_{x \ast y},\underbrace{[x_j \cdots x_n]}_{z}} \phi_{\underbrace{[x_1 \cdots \widehat{x_i} \cdots \widehat{x_j} \cdots x_n]}_{x},\underbrace{[x_i \cdots \widehat{x_j} \cdots x_n]}_{y}}.
\end{eqnarray*}
Thus, 1C + 2C lies in $D_{n-1}^{SR}(X,\rho_X)$. Next, we consider
\begin{eqnarray*}
&&  1D + 2D\\
  		&=&-\sum_{j=2}^{i-1}(-1)^j (-1)^n \Big(\phi_{[x_1 \cdots \widehat{x_i} \cdots x_n],[x_i \cdots  x_n]} \\
  		& &((x_1 \ast x_i)\ast(x_j \ast x_i),\ldots, (x_{j-1} \ast x_i)*(x_j * x_i), \widehat{x_j \ast x_i}, x_{j+1} \ast x_i, \ldots , x_{i-1} \ast x_i, \rho_X(x_i),x_{i+1}, \ldots, x_n)\\
  		&+&(x_1 \ast x_j,\ldots,x_{j-1} \ast x_j, x_{j+1}, \ldots , x_n) \Big). 
  	\end{eqnarray*}
  	Since $\phi_{[x_1 \cdots \widehat{x_i} \cdots x_n],[x_i \cdots  x_n]}=\phi_{[(x_1 \ast x_j) \cdots (x_{j-1} \ast x_j) x_{j+1} \cdots \widehat{x_i} \cdots x_n],[x_i \cdots x_n]}$, it follows that 1D + 2D lies in $D_{n-1}^{SR}(X,\rho_X)$. Next, we have
  	\begin{eqnarray*}
1E + 2B &=& (-1)^i (-1)^n \Big(- \phi_{[x_1 \cdots \widehat{x_i} \cdots x_n],[x_i \cdots  x_n]}(x_1,\ldots, x_{i-1}, \widehat{\rho_X(x_i)},x_{i+1}, \ldots, x_n)\\
  		&+&\phi_{[x_1 \cdots \widehat{x_i} \cdots x_n],[x_i \cdots  x_n]}
  		(x_1,\ldots, x_{i-1}, \widehat{x_i},x_{i+1}, \ldots, x_n) \Big)\\
		&=& 0.
  	\end{eqnarray*}
 Consider the term
  	\begin{eqnarray*}
 &&1F + 2F\\
	&= &-\sum_{j=i+1}^{n}(-1)^j (-1)^n \Big(\phi_{[x_1 \cdots \widehat{x_i} \cdots x_n],[x_i \cdots x_n]}\\
  		& &(x_1 \ast x_i \ast x_j,\ldots , x_{i-1} \ast x_i \ast x_j, \rho_X(x_i \ast x_j), x_{i+1}\ast x_j, \ldots,{x_{j-1} \ast x_j}, \widehat{x_j}, x_{j+1}, \ldots, x_n)\\
  		& + &(x_1 \ast x_j,\ldots , x_{j-1} \ast x_j, x_{j+1}, \ldots, x_n) \Big).
  	\end{eqnarray*}
  	Since $\phi_{[(x_1\ast x_j) \cdots \widehat{(x_i \ast x_j)} \cdots (x_{j-1}\ast x_j) x_{j+1} \cdots x_n)],[(x_i \ast x_j) \cdots (x_{j-1}\ast x_j) x_{j+1} \cdots x_n]}=\phi_{[x_1 \cdots \widehat{x_i} \cdots x_n],[x_i \cdots x_n]}$, it follows that 1F + 2F lies in $D_{n-1}^{SR}(X, \rho_X)$. Next, consider the term
  	\begin{eqnarray*}
&& 1G + 2G\\
  		&= & (-1)^n \Big(\phi_{[x_1 \cdots \widehat{x_i} \cdots x_n],[x_i \cdots  x_n]} \psi_{[(x_1 \ast x_i)\widehat{(x_2 \ast x_i)}\cdots (x_{i-1} \ast x_i) \rho_X(x_i)x_{i+1} \cdots x_n],[(x_2 \ast x_i)\cdots \cdots  (x_{i-1} \ast x_i) \rho_X(x_i)x_{i+1} \cdots x_n]}\\
  		& &(x_2 \ast x_i ,\ldots , x_{i-1} \ast x_i , \rho_X(x_i), x_{i+1}, \ldots, x_n)
  		+\psi_{[x_1 \widehat{x_2}x_3 \cdots x_n],[x_2 x_3 \cdots x_n]}(x_2,\ldots , x_n) \Big)
  	\end{eqnarray*}
If $i>2$, then using the condition \hyperref[axiom]{(A3)}, we get 
  	\begin{eqnarray*}
  		& &\phi_{\underbrace{[x_1 \cdots \widehat{x_i} \cdots x_n]}_{x \ast y},\underbrace{[x_i \cdots  x_n]}_{z}}\\
  		& &\psi_{\underbrace{[(x_1 \ast x_i)\widehat{(x_2 \ast x_i)}\cdots (x_{i-1} \ast x_i) \rho_X(x_i)x_{i+1} \cdots x_n]}_{x},\underbrace{[(x_2 \ast x_i)\cdots \cdots  (x_{i-1} \ast x_i) \rho_X(x_i)x_{i+1} \cdots x_n]}_{y}}\\
  		&=&
  		\psi_{\underbrace{[x_1 \widehat{x_2}x_3 \cdots x_n]}_{x \ast z},\underbrace{[x_2 x_3 \cdots x_n]}_{y \ast z}}\phi_{\underbrace{[x_2 \cdots \widehat{x_i} \cdots x_n]}_{y},\underbrace{[x_i \cdots x_n]}_{z}}.
  	\end{eqnarray*}
Thus, in this case, 1G + 2G lies in $D_{n-1}^{SR}(X, \rho_X)$. If $i=2$, then the condition \hyperref[axiom]{(A9)} gives
  	\begin{eqnarray*}
  		& &-\psi_{\underbrace{[x_1 \widehat{x_2}x_3 \cdots x_n]}_{x \ast \rho_X(y)},\underbrace{[x_2 x_3 \cdots x_n]}_{y}}\\
  		&=& \phi_{\underbrace{[x_1, \widehat{x_2},x_3, \cdots x_n]}_{x \ast \rho_X(y)},\underbrace{[x_2,x_3, \cdots x_n]}_{y}}
  		\psi_{\underbrace{[x_1 \cdots x_n]}_{x},\underbrace{[\rho_X(x_2)x_3 \cdots x_n]}_{\rho_X(y)}} \eta_{\underbrace{[x_1 \widehat{x_2} x_3 \cdots x_n]}_{y}}.
  	\end{eqnarray*}
Hence, 1G + 2G lies in $D_{n-1}^{SR}(X, \rho_X)$, and we have proved that $\partial_n(V_n) \subset D_{n-1}^{SR}(X,\rho_X)$.
  \end{proof}

  \begin{proposition}\label{submoduleI}
Let $(X,\rho_X)$ be a symmetric rack. Then $\partial_n(D_{n}^{SR}(X,\rho_X)) \subset D_{n-1}^{SR}(X,\rho_X)$ for each $n \ge 1$.
  \end{proposition}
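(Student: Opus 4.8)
The plan is to obtain the proposition as an essentially formal consequence of Lemmas \ref{partI} and \ref{partII}, together with the module-linearity of the boundary maps, once the base case $n = 1$ is handled by hand. First I would recall that $D_n^{SR}(X,\rho_X)$ is, by definition, the left $\mathbb{Z}(X,\rho_X)$-submodule of $C_n(X,\rho_X)$ generated by $U_n \cup V_n$, so that a typical element has the form $\sum_k r_k w_k$ with $r_k \in \mathbb{Z}(X,\rho_X)$ and $w_k \in U_n \cup V_n$. Because $\partial_n$ is $\mathbb{Z}(X,\rho_X)$-linear, applying it commutes with this expansion: $\partial_n\big(\sum_k r_k w_k\big) = \sum_k r_k \partial_n(w_k)$.

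For $n \ge 2$, Lemma \ref{partI} places $\partial_n(w_k)$ in $D_{n-1}^{SR}(X,\rho_X)$ when $w_k \in U_n$, and Lemma \ref{partII} does the same when $w_k \in V_n$. Since $D_{n-1}^{SR}(X,\rho_X)$ is itself a left $\mathbb{Z}(X,\rho_X)$-submodule, it is closed under the $\mathbb{Z}(X,\rho_X)$-action and under finite sums, so the full combination $\sum_k r_k \partial_n(w_k)$ lies in $D_{n-1}^{SR}(X,\rho_X)$. This settles all $n \ge 2$.

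For the remaining case $n = 1$, note that $V_1 = \emptyset$, so it suffices to evaluate $\partial_1$ on a generator $\eta_x(x) - (\rho_X(x))$ of $D_1^{SR}(X,\rho_X)$. Using $\partial_1(x) = -\psi_{x\ast^{-1}p,p}(p)$ and linearity, I would compute
\begin{equation*}
\partial_1\big(\eta_x(x) - (\rho_X(x))\big) = -\eta_x\,\psi_{x\ast^{-1}p,\,p}(p) + \psi_{\rho_X(x)\ast^{-1}p,\,p}(p),
\end{equation*}
and then exploit that $\rho_X$ is a good involution to rewrite $\rho_X(x\ast^{-1}p) = \rho_X(x)\ast^{-1}p$ (combining (S2) and (S3)). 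Applying relation \hyperref[axiom]{(A6)} to the pair $(x\ast^{-1}p,\,p)$, and using $(x\ast^{-1}p)\ast p = x$, gives $\psi_{\rho_X(x)\ast^{-1}p,\,p} = \eta_x\,\psi_{x\ast^{-1}p,\,p}$, so the two terms cancel and the expression equals $0 \in D_0^{SR}(X,\rho_X)$.

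I expect no serious obstacle here: for $n \ge 2$ the statement is purely formal, depending only on $\partial_n$ being a module map and $D_{n-1}^{SR}(X,\rho_X)$ being a submodule, so that the two preceding lemmas combine immediately. The one place requiring a genuine (though brief) verification is the base case $n = 1$, where the cancellation relies specifically on the compatibility of $\rho_X$ with $\ast^{-1}$ and on relation \hyperref[axiom]{(A6)}.
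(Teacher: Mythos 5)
Your proposal is correct and follows essentially the same route as the paper: the cases $n \ge 2$ are dispatched formally from Lemmas \ref{partI} and \ref{partII} using the $\mathbb{Z}(X,\rho_X)$-linearity of $\partial_n$ and the fact that $D_{n-1}^{SR}(X,\rho_X)$ is a submodule, and the case $n=1$ is verified directly on the generators $\eta_x(x)-\rho_X(x)$ via relation \hyperref[axiom]{(A6)} applied to the pair $(x\ast^{-1}p,\,p)$, exactly as in the paper. Your write-up even makes explicit the identity $\rho_X(x\ast^{-1}p)=\rho_X(x)\ast^{-1}p$, which the paper's one-line computation uses silently.
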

\begin{proof}
In view of Lemma \ref{partI} and Lemma \ref{partII}, it remains to verify that $\partial_1(D_{1}^{SR}(X, \rho_X))={0}$.
Note that, $D_{1}^{SR}(X, \rho_X)$ is generated by $\{\eta_{x}(x)-\rho_X(x) \mid x \in X \}$. It follows from the condition \hyperref[axiom]{(A6)} that  $$\partial_1(\eta_{x}(x)-\rho_X(x))= \eta_{x}\partial_{1}(x)-\partial_1\rho_X(x)=-\eta_x \psi_{x \ast^{-1} p,p}(p)+\psi_{\rho_X(x) \ast^{-1} p,p}(p)= 0$$
for all $x \in X$. Hence, $\partial_n(D_{n}^{SR}(X,\rho)) \subset D_{n-1}^{SR}(X,\rho)$ for all $n \ge 0$.
\end{proof}

Lemma \ref{partI}, Lemma \ref{partII} and Proposition \ref{submoduleI} lead to a (co)homology theory for symmetric racks. Let $(X,\rho_X)$ be a symmetric rack and $A$ be a right $\mathbb{Z}(X,\rho_X)$-module. Define
$$C_n \big((X,\rho_X),A\big):=A \otimes_{\mathbb{Z}(X,\rho_X)}C_n(X,\rho_X)$$	
and 
$$C_n^{SR} \big((X,\rho_X),A \big):= \big(A \otimes_{\mathbb{Z}(X,\rho_X)}C_n(X,\rho_X) \big)/ \big(A \otimes_{\mathbb{Z}(X,\rho_X)}D_n^{SR}(X,\rho_X)\big).$$
Let $\partial_n:C_n^{SR}((X,\rho_X),A) \rightarrow C_{n-1}^{SR}((X, \rho_X),A)$ be the induced $\mathbb{Z}(X,\rho_X)$-linear map. Then $\big(C_*^{SR}((X,\rho_X),A),\partial_*\big)$ is a chain complex, yielding  homology groups  $H_n^{SR}((X,\rho_X),A)$. Let us set $C_n^{SR}(X,\rho_X)= C_n(X,\rho_X)/ D_n^{SR}(X,\rho_X)$. If $A$ is a left $\mathbb{Z}(X,\rho)$-module, then we define $$C^n_{SR} \big((X,\rho_X),A \big):=\Hom_{\mathbb{Z}(X,\rho_X)} \big(C_n^{SR}(X,\rho_X),A\big)$$ and define $\delta^n: C^n_{SR}((X,\rho_X),A) \to C^{n+1}_{SR}((X,\rho_X),A)$ to be the induced coboundary map. Then, $\big(C^*_{SR}((X,\rho_X),A),\delta^*\big)$ is a cochain complex, leading to cohomology groups $H^n_{SR}((X,\rho_X),A)$.

\begin{remark}
In view of Remark \ref{Module-algebra}, for a homogeneous $(X,\rho_X)$-module $\mathscr{F}=(A,\phi,\psi,\eta)$, the cohomology group $\mathcal{H}^2_{\mathrm{SR}}(X,\mathscr{F})$  defined in Theorem \ref{theorem ext cohom} coincides with the cohomology group $H^2_{SR}((X,\rho_X),A)$ defined above.
\end{remark}

We can modify the preceding construction for quandles as well. Let $(X,\rho_X)$ be a symmetric quandle.  Define $D_n^{SQ}(X, \rho_X)$ to be the submodule of $C_n(X,\rho_X)$ generated by the $U_n \cup V_n \cup W_n$, where 
$$W_n:=\bigcup_{i=1}^{n-1} \big\{(x_1, \ldots, x_n)\in X^n \mid x_i=x_{i+1} \big\}.$$

\begin{proposition}\label{submoduleII}
	Let $(X,\rho_X)$ be a symmetric quandle. Then $\partial_n(D^{SQ}_{n}(X,\rho_X)) \subset D^{SQ}_{n-1}(X,\rho_X)$ for each $n \ge 2$.
\end{proposition}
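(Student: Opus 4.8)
The plan is to reduce the whole statement to the single new family of generators $W_n$. Since $\partial_n$ is $\mathbb{Z}(X,\rho_X)$-linear and $D_{n-1}^{SR}(X,\rho_X)\subset D_{n-1}^{SQ}(X,\rho_X)$ (the latter being generated by $U_{n-1}\cup V_{n-1}\cup W_{n-1}\supseteq U_{n-1}\cup V_{n-1}$), Lemma \ref{partI} and Lemma \ref{partII} immediately give $\partial_n(U_n),\,\partial_n(V_n)\subset D_{n-1}^{SQ}(X,\rho_X)$. Thus it suffices to prove $\partial_n(W_n)\subset D_{n-1}^{SQ}(X,\rho_X)$, and by linearity I may fix one generator $(x_1,\ldots,x_n)$ with $x_i=x_{i+1}$ for some $1\le i\le n-1$ and show that $\partial_n(x_1,\ldots,x_n)$ lands in $D_{n-1}^{SQ}(X,\rho_X)$.

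First I would dispose of every term of $\partial_n(x_1,\ldots,x_n)$ whose summation index $j\notin\{i,i+1\}$. In the first sum such a summand is $\phi_{\bullet,\bullet}(x_1,\ldots,\widehat{x_j},\ldots,x_n)$, whose tuple still carries the adjacent equal pair $x_i,x_{i+1}$; in the second sum the summand $(x_1\ast x_j,\ldots,x_{j-1}\ast x_j,x_{j+1},\ldots,x_n)$ either leaves $x_i,x_{i+1}$ untouched (if $j<i$) or replaces them by the still-equal adjacent pair $x_i\ast x_j=x_{i+1}\ast x_j$ (if $j>i+1$). In either case the tuple lies in $W_{n-1}$, and as $D_{n-1}^{SQ}(X,\rho_X)$ is a $\mathbb{Z}(X,\rho_X)$-submodule the whole summand lies in it. Hence only the contributions from $j=i$, $j=i+1$ and the terminal $\psi$-term remain to be analysed, and here the cases $i\ge 2$ and $i=1$ diverge.

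For $i\ge 2$ I claim the $j=i$ and $j=i+1$ summands cancel within each of the two sums. In the first sum the deleted tuples agree, since deleting $x_i$ equals deleting $x_{i+1}$; the coefficients agree because $[x_1\cdots\widehat{x_i}\cdots x_n]=[x_1\cdots\widehat{x_{i+1}}\cdots x_n]$ and, using the idempotency $x_i\ast x_{i+1}=x_i\ast x_i=x_i$, also $[x_i\cdots x_n]=[x_{i+1}\cdots x_n]$; the opposite signs $(-1)^i$ and $(-1)^{i+1}$ then force cancellation. In the second sum the $j=i$ tuple has $i$-th entry $x_{i+1}=x_i$ while the $j=i+1$ tuple has $i$-th entry $x_i\ast x_{i+1}=x_i$, so the two tuples coincide and again cancel by sign. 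Finally the terminal summand $(-1)^n\psi_{\bullet,\bullet}(x_2,\ldots,x_n)$ still carries the adjacent equal pair $x_i,x_{i+1}$ (as $i\ge 2$), hence lies in $W_{n-1}$. This settles $i\ge 2$ using only idempotency of the underlying quandle, not the module relation \hyperref[Qaxiom]{(A10)}.

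The genuinely quandle-specific case, and the main obstacle, is $i=1$, that is $x_1=x_2$: now the index $j=1$ is absent from the sums, no pairwise cancellation is available, and the relation \hyperref[Qaxiom]{(A10)} must intervene. Here I would collect the three surviving terms indexed by $j=2$ (one from each sum) together with the terminal $\psi$-term. Writing $a:=[x_1 x_3\cdots x_n]$ and noting that $x_1=x_2$ yields $a=[x_2 x_3\cdots x_n]$ and $x_1\ast x_2=x_1$, all three share the common $(n-1)$-tuple $T:=(x_1,x_3,\ldots,x_n)$ and equal subscripts, so their sum equals $(-1)^n\big(\phi_{a,a}+\psi_{a,a}-1\big)(T)$. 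By the defining relation \hyperref[Qaxiom]{(A10)} of the symmetric quandle algebra, $\phi_{a,a}+\psi_{a,a}=1$, whence this combination vanishes. The remaining $j\ge 3$ terms fall under the $W_{n-1}$ analysis of the previous paragraph, completing the argument.
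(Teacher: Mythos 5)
Your proposal is correct and follows essentially the same route as the paper: reduce to the generators $W_n$ via Lemmas \ref{partI} and \ref{partII}, dispose of the terms whose index avoids the repeated pair by observing their tuples still lie in $W_{n-1}$, cancel the $j=i$ and $j=i+1$ terms by quandle idempotency when $i\ge 2$, and invoke the relation (A10) to kill the surviving $\phi_{a,a}$, $\psi_{a,a}$ and rack-action terms when $i=1$. The only differences are organizational (you treat the generic indices uniformly before splitting cases, and you cancel within each sum rather than grouping by index), which do not change the argument.
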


\begin{proof}
It suffices to prove that $\partial_n(x_1, \ldots, x_{i-1},x_i,x_i,x_{i+2},\ldots, x_n) \in D^{Q}_{n-1}(X,\rho_X)$ for each $1 \leq i \leq n-1$. For $i=1$, we have
	\begin{eqnarray*}
	&& \partial_n \big((x_1,x_1,x_3, \ldots ,x_n)\big)\\
		&=&(-1)^n \phi_{[x_1x_3 \cdots x_n],[x_1 x_3 \cdots  x_n]} (x_1,x_3, \ldots , x_n)\\
		& + &\sum_{j=3}^{n}(-1)^j (-1)^n \phi_{[x_1 x_1 x_3 \cdots \widehat{x_j} \cdots x_n],[x_j \cdots  x_n]}
		(x_1,x_1,x_3, \ldots, \widehat{x_j}, \ldots , x_n)\\
		&-& (-1)^n (x_1 \ast x_1,x_3, \ldots , x_n)\\
		& - &\sum_{j=3}^{n}(-1)^j (-1)^n (x_1 \ast x_j, x_1 \ast x_j, x_3 \ast x_j, \ldots,x_{j-1} \ast x_j, x_{j+1}, \ldots , x_n)\\
		& + &(-1)^n \psi_{[x_1 \widehat{x_1}x_3 \cdots x_n],[x_1 x_3 \cdots x_n]}(x_1, x_3, \ldots , x_n)\\
		&=&\sum_{j=3}^{n}(-1)^j (-1)^n \Big(\phi_{[x_1 x_1 x_3 \cdots \widehat{x_j} \cdots x_n],[x_j \cdots  x_n]}
		(x_1,x_1,x_3, \ldots, \widehat{x_j}, \ldots , x_n)\\
		&-&(x_1 \ast x_j,x_1 \ast x_j,x_3 \ast x_j, \ldots,x_{j-1} \ast x_j, x_{j+1}, \ldots , x_n) \Big)  \in D^{Q}_{n-1}(X,\rho_X),
	\end{eqnarray*}
where the last equality follows due to
\begin{eqnarray*}
& & \phi_{[x_1x_3 \cdots x_n],[x_1 x_3 \cdots  x_n]} (x_1,x_3, \ldots , x_n)+\psi_{[x_1 \widehat{x_1}x_3 \cdots x_n],[x_1 x_3 \cdots x_n]}(x_1, x_3\ldots , x_n)\\
&=& (\phi_{[x_1x_3 \cdots x_n],[x_1x_3 \cdots  x_n]}+\psi_{[x_1 \widehat{x_1}x_3 \cdots x_n],[x_1 x_3 \cdots x_n]})(x_1, x_3, \ldots , x_n)\\
&=&(x_1, x_3, \ldots , x_n),
\end{eqnarray*}
which itself is a consequence of the condition \hyperref[Qaxiom]{(A10)}.
\par 
For $i \geq 2$, we have
\begin{eqnarray*}
&& \partial_n \big((x_1, \ldots,x_{i-1},x_i,x_i,x_{i+2},\ldots , x_n) \big)\\
	& = &\sum_{j=2}^{i-1}(-1)^j (-1)^n \Big(\phi_{[x_1 \cdots \widehat{x_j} \cdots x_n],[x_j \cdots x_n]}(x_1,\ldots, \widehat{x_j}, \ldots ,x_i,x_i,\ldots x_n)\\
	&-& (x_1 \ast x_j,\ldots,x_{j-1} \ast x_j, x_{j+1}, \ldots ,x_i,x_i,\ldots x_n) \Big)\\
	&+& (-1)^i (-1)^n \Big(\phi_{[x_1 \cdots x_{i-1}\widehat{x_i}x_ix_{i+2} \cdots x_n],[x_ix_ix_{i+2} \cdots  x_n]}(x_1,\ldots, x_{i-1}, \widehat{x_i},x_i,x_{i+2}, \ldots, x_n)\\
	&-& (x_1 \ast x_i,\ldots , x_{i-1} \ast x_i,x_i,x_{i+2}, \ldots, x_n) \Big)\\
	&+& (-1)^{i+1} (-1)^n \Big(\phi_{[x_1 \cdots x_{i-1}x_i\widehat{x_{i}}x_{i+2} \cdots x_n],[x_ix_{i+2} \cdots  x_n]}(x_1,\ldots, x_{i-1},x_i, \widehat{x_{i}},x_{i+2}, \ldots, x_n)\\
    &-& (x_1 \ast x_{i},\ldots ,x_{i-1}\ast x_i, x_{i} \ast x_{i},x_{i+2}, \ldots, x_n) \Big)\\
	&+ &\sum_{j=i+2}^{n}(-1)^j (-1)^n \Big(\phi_{[x_1 \cdots \widehat{x_j} \cdots x_n],[x_j \cdots  x_n]}(x_1 , \ldots, x_i,x_i,\ldots \widehat{x_j}, \ldots, x_n)\\
	& - &(x_1 \ast x_j,\ldots x_i\ast x_j,x_i \ast x_j,\ldots , x_{j-1} \ast x_j, x_{j+1}, \ldots, x_n) \Big)\\
	& + &(-1)^n \psi_{[x_1 \widehat{x_2}x_3 \cdots x_n],[x_2 x_3 \cdots x_n]}(x_2,\ldots, x_i,x_i,\ldots , x_n)\\
	& = &\sum_{j=2}^{i-1}(-1)^j (-1)^n \Big(\phi_{[x_1 \cdots \widehat{x_j} \cdots x_n],[x_j \cdots x_n]}(x_1,\ldots, \widehat{x_j}, \ldots ,x_i,x_i,\ldots x_n)\\
	&-& (x_1 \ast x_j,\ldots,x_{j-1} \ast x_j, x_{j+1}, \ldots ,x_i,x_i,\ldots x_n) \Big)\\
	&+ &\sum_{j=i+2}^{n}(-1)^j (-1)^n \Big(\phi_{[x_1 \cdots \widehat{x_j} \cdots x_n],[x_j \cdots  x_n]}(x_1 , \ldots,x_i,x_i,\ldots \widehat{x_j}, \ldots, x_n)\\
	& - &(x_1 \ast x_j,\ldots x_i\ast x_j,x_i \ast x_j,\ldots , x_{j-1} \ast x_j, x_{j+1}, \ldots, x_n) \Big)\\
	& + &(-1)^n \psi_{[x_1 \widehat{x_2}x_3 \cdots x_n],[x_2 x_3 \cdots x_n]}(x_2,\ldots, x_i,x_i,\ldots , x_n) \in D^Q_{n-1}(X,\rho_X),
\end{eqnarray*} 
where the $i$-th and the $(i+1)$-th terms cancel out due to the equality $$\phi_{[x_1 \cdots x_{i-1}\widehat{x_i}x_ix_{i+2} \cdots x_n],[x_ix_ix_{i+2} \cdots  x_n]}=\phi_{[x_1 \cdots x_{i-1}x_i\widehat{x_{i}}x_{i+2} \cdots x_n],[x_ix_{i+2} \cdots  x_n]}.$$
Hence, $\partial_n(D^{SQ}_{n}(X,\rho_X)) \subset D^{SQ}_{n-1}(X,\rho_X)$, and the proof is complete.
\end{proof}

We can now define the homology and cohomology of symmetric quandles. Let $(X,\rho_X)$ be a symmetric quandle and $M$ be a right $\mathbb{Z}(X,\rho_X)$-module. We define 
	$$C_n^{SQ} \big((X,\rho_X),M \big):= \big(M \otimes_{\mathbb{Z}(X,\rho_X)}C_n(X,\rho_X) \big)/ \big(M \otimes_{\mathbb{Z}(X,\rho_X)}D_n^{SQ}(X,\rho_X)\big)$$
and the boundary map to be the same as in the case of symmetric racks. Then $\big(C_*^{SQ}((X,\rho_X),M),\partial_*\big)$ is a chain complex leading to homology groups
$H_n^{SQ}((X,\rho_X),M)$.  Similarly, if $M$ is a left $\mathbb{Z}(X,\rho)$-module, then define $$C^n_{SQ} \big((X,\rho_X),M \big):=\Hom_{\mathbb{Z}(X,\rho_X)}\big(C_n^{SQ}(X,\rho_X),M \big)$$ and take $\delta^n$ to be the same as in the case of symmetric racks. Then $\big(C^*_{SQ}((X,\rho_X),M),\delta^*\big)$ is a cochain complex yielding cohomology group $H^n_{SQ}((X,\rho_X),M)$.

\begin{remark}
We can equip the algebra $\mathbb{Z}(X,\rho_X)$ with an augmentation $\e:\mathbb{Z}(X,\rho_X) \rightarrow \mathbb{Z}$, defined as $\e(\phi_{x,y})=1,~ \e(\psi_{x,y})=0$ and $\e(\eta_x)=-1$. Also, any abelian group $A$ can be considered as trivial homogeneous $(X,\rho_X)$-module by setting 
$$A_x=A, \quad  \phi_{x,y}=\id_A,  \quad \psi_{x,y}= 0 \quad \textrm{and} \quad \eta_x= -\id_A$$ for all $x, y\in X$.  In view of Remark \ref{Module-algebra}, our (co)homology is a module-theoretic generalisation of the (co)homology defined in \cite{MR2657689}, and also recovers it as a special case. Applications of this module-theoretic cohomology of symmetric racks (respectively quandles) to knot theory will be explored in a forthcoming work.
\end{remark}

\begin{remark}
By definition, a map $\kappa: C_2(X,\rho_X) \rightarrow A$, where $A$ is a $\mathbb{Z}(X,\rho_X)$-module, is a symmetric rack 2-cocycle if and only if $\kappa$ satisfies the following conditions:
		\begin{enumerate}
			\item for any $(x_1,x_2,x_3) \in X^3,$ we have $$-\phi_{[x_1x_3],[x_2x_3]}\kappa(x_1,x_3)+\phi_{[x_1x_2],x_3}\kappa(x_1,x_2)+\kappa(x_1*x_2,x_3)-\kappa(x_1*x_3,x_2*x_3)-\psi_{[x_1x_3],[x_2x_3]}\kappa(x_2,x_3)=0.$$
			\item for any $(x_1,x_2) \in X^2,$ we have
			$$\eta_{[x_1x_2]}\kappa(x_1,x_2)-\kappa(\rho_X(x_1),x_2)=0 \text{ and }
			\phi_{x_1,x_2}\kappa(x_1*x_2,\rho_X(x_2))+\kappa(x_1,x_2)=0$$
		\end{enumerate}
Furthermore,  $\kappa$ is a symmetric quandle 2-cocycle if it additionally satisfies $\kappa(x,x)=0$ for all $x \in X.$
\end{remark}
\medskip

%%%%%%%%%%%%%%%%%%%%%%%%%%%%%%%%%
%End of Section
%%%%%%%%%%%%%%%%%%%%%%%%%%%%%%%%

\section{Relation between cohomology of symmetric racks and cohomology of associated groups}\label{relation cohom symm rack and group}
In \cite[Section 5]{MR1948837},  an isomorphism between the second rack cohomology of a rack and the first cohomology of its associated group has been established. In this section, we establish an analogous relation between the second cohomology of a symmetric rack and the first cohomology of its associated group.  

\begin{theorem}
Let $(X,\rho_X)$ be a symmetric rack and $A$ be an abelian group viewed as a trivial homogeneous $(X,\rho_X)$-module.  Let $A$ also be viewed as a symmetric rack  with the trivial rack operation and with $a \mapsto -a$ as the good involution, and let $\Hom(X,A)$ be the abelian group of symmetric rack homomorphisms with the point-wise addition.  Let $G_{(X,\rho_X)}$ act on $\Hom(X,A)$ from the left by $(e_x \cdot f)(z)= f(z *x)$. Then there is a group isomorphism
	$$H^2_{SR}\big((X,\rho_X),A\big) \cong H^1\big(G_{(X,\rho_X)}, \Hom(X,A)\big).$$
\end{theorem}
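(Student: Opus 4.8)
The plan is to exploit the extension--theoretic description of $H^2_{SR}((X,\rho_X),A)$ afforded by Theorem~\ref{theorem ext cohom} (via the identification in the preceding remark) together with the explicit presentation of $G_{(X,\rho_X)}$, and to set up an explicit cochain--level correspondence between symmetric rack $2$--cocycles and crossed homomorphisms (derivations) of $G_{(X,\rho_X)}$. First I would specialise everything to the trivial homogeneous module, where $\phi_{x,y}=\id$, $\psi_{x,y}=0$ and $\eta_x=-\id$. In this case a $2$--cocycle is exactly a map $\kappa\colon X\times X\to A$ satisfying
$$\kappa(x*y,z)=\kappa(x,z)-\kappa(x,y)+\kappa(x*z,y*z),\quad \kappa(\rho_X(x),y)=-\kappa(x,y),\quad \kappa(x*y,\rho_X(y))=-\kappa(x,y),$$
while a coboundary takes the form $\kappa_{x,y}=v_x-v_{x*y}$ for some $v\colon X\to A$ with $v_{\rho_X(x)}=-v_x$. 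On the group side I would record that, since a symmetric rack homomorphism $f\colon X\to A$ satisfies $f(z*x)=f(z)$, the prescribed action $(e_x\cdot f)(z)=f(z*x)$ restricts to the given action on $\Hom(X,A)$, and then spell out the $1$--cocycles and $1$--coboundaries of $G_{(X,\rho_X)}$ with these coefficients.

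Next I would build the forward map $\Theta$ at the level of cochains, sending a $2$--cocycle $\kappa$ to the function $d_\kappa$ defined on generators by $d_\kappa(e_x)=\kappa(\,\cdot\,,x)$ and extended to all of $G_{(X,\rho_X)}$ by the derivation rule $d_\kappa(gh)=d_\kappa(g)+g\cdot d_\kappa(h)$. The heart of the argument is three verifications. The crossed--homomorphism identity reduces, after the substitution $z\mapsto z*^{-1}y$ and unravelling the action $e_y^{-1}\cdot(-)=(\,\cdot\,)*^{-1}y$, precisely to the first cocycle condition. The relation $e_{\rho_X(x)}=e_x^{-1}$ is respected because of $\kappa(x*y,\rho_X(y))=-\kappa(x,y)$, and the conjugation relation $e_{x*y}=e_y^{-1}e_xe_y$ is respected because of the first cocycle condition; these two checks are exactly what guarantees that $d_\kappa$ is well defined on $G_{(X,\rho_X)}$ and not merely on the free group. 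Finally, the antisymmetry $\kappa(\rho_X(z),x)=-\kappa(z,x)$ forces each value $d_\kappa(e_x)$ to be compatible with the good involution $a\mapsto -a$ on $A$.

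I would then check that $\Theta$ descends to (co)homology: a coboundary $\kappa_{x,y}=v_x-v_{x*y}$ maps to the principal derivation $g\mapsto g\cdot\widehat{v}-\widehat{v}$ attached to the cochain $\widehat{v}(z)=v_z$, so $\Theta$ induces a homomorphism $H^2_{SR}((X,\rho_X),A)\to H^1\big(G_{(X,\rho_X)},\Hom(X,A)\big)$. For the inverse I would send a derivation $d$ to the map $\kappa_d(z,x):=d(e_x)(z)$ and verify, using the relations in $G_{(X,\rho_X)}$ and the derivation property evaluated at points of $X$, that $\kappa_d$ satisfies the three cocycle conditions and that inner derivations yield coboundaries; a direct computation then shows that $\Theta$ and $d\mapsto\kappa_d$ are mutually inverse on cohomology.

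The main obstacle I anticipate is the matching of coefficient modules. The raw value $d_\kappa(e_x)=\kappa(\,\cdot\,,x)$ is automatically $\rho_X$--antisymmetric, but it need not be constant on the $G_{(X,\rho_X)}$--orbits of $X$, so a priori it is not a genuine symmetric rack homomorphism into the trivial rack $A$. The crux is therefore to show that every cohomology class admits a representative derivation with values in $\Hom(X,A)$, and dually that the cocycle conditions, combined with the freedom to alter $\kappa$ by a coboundary $v_x-v_{x*y}$, collapse precisely the discrepancy between $\rho_X$--antisymmetric maps and rack homomorphisms. Carrying out this reduction, and checking that it remains compatible with the good--involution relation $e_{\rho_X(x)}=e_x^{-1}$ (the feature absent from the classical rack computation of \cite{MR1948837}), is where the symmetric structure is used essentially, and is the step I would treat most carefully.
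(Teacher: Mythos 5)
Your architecture is exactly the paper's: the paper also works at the level of cochains, sending a group $1$-cocycle $f$ to the rack $2$-cocycle $\theta_f(x,y)=f(e_y)(x)$ and, in the other direction, sending $\sigma$ to the derivation whose value on $e_x$ is $\sigma'(x)=\sigma(\,\cdot\,,x)$; your extension of $d_\kappa$ by the derivation rule is packaged in the paper as a homomorphism $\tau_\sigma\colon G_{(X,\rho_X)}\to G_{(X,\rho_X)}\ltimes \Hom(X,A)$, $\tau_\sigma(e_x)=(e_x,\sigma'(x))$, followed by the projection, and the three relation-checks you list (conjugation relation against the first cocycle identity, $e_{\rho_X(x)}=e_x^{-1}$ against $\kappa(x*y,\rho_X(y))=-\kappa(x,y)$, antisymmetry against $\kappa(\rho_X(x),y)=-\kappa(x,y)$) are precisely the computations in the paper. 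So the route is the same; the problem is the step you flag as the crux and leave open.

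That step is a genuine gap, and in fact it cannot be closed in the form you state it. If $\Hom(X,A)$ literally consists of symmetric rack homomorphisms into the trivial rack, then every $f\in\Hom(X,A)$ satisfies $f(z*x)=f(z)$, so the prescribed $G_{(X,\rho_X)}$-action is trivial and $H^1\big(G_{(X,\rho_X)},\Hom(X,A)\big)=\Hom_{\mathrm{grp}}\big(G_{(X,\rho_X)},\Hom(X,A)\big)$. With this reading the asserted isomorphism is false: take $X=\{a,b\}$ with $x*y:=\bar{x}$ (the flip) and $\rho_X=\id_X$, a symmetric rack (not a quandle), and $A=\mathbb{Z}/2$. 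The relations $e_{a*a}=e_a^{-1}e_ae_a$ and $e_{\rho_X(x)}=e_x^{-1}$ give $e_a=e_b$ and $e_x^2=1$, so $G_{(X,\rho_X)}\cong\mathbb{Z}/2$; since $X$ is connected, $\Hom(X,\mathbb{Z}/2)\cong\mathbb{Z}/2$ with trivial action, whence $H^1\cong\mathbb{Z}/2$. On the other side, $\kappa(x*y,\rho_X(y))=-\kappa(x,y)$ forces $\kappa(\bar{x},y)=\kappa(x,y)$, the first cocycle identity then forces $\kappa$ to be a constant $s$, and every constant is a coboundary ($v_a=s$, $v_b=0$ gives $v_x-v_{x*y}=v_a+v_b=s$), so $H^2_{SR}\big((X,\rho_X),\mathbb{Z}/2\big)=0$. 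Hence no passage to better representatives can force orbit-constancy of the values $\kappa(\,\cdot\,,x)$. The repair is not a cleverer choice of representatives but a change of coefficient module: one must take the group of \emph{all} maps $f\colon X\to A$ with $f(\rho_X(z))=-f(z)$, with the (now nontrivial) action $(e_x\cdot f)(z)=f(z*x)$ --- the symmetric analogue of $\Map(X,A)$ in Etingof--Gra\~na. With that module every step of your proposal (and of the paper's argument) goes through, and your ``crux'' evaporates because orbit-constancy is never needed. Note that your worry exposes a real imprecision in the paper itself: its proof verifies only $\rho_X$-antisymmetry before asserting $\sigma'(x)\in\Hom(X,A)$, so it too is implicitly working with the larger module of antisymmetric maps rather than with rack homomorphisms as stated.
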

\begin{proof}
For $x, y, z \in X$ and $f \in \Hom(X,A)$, we see that	
$$(e_xe_y \cdot f)(z)=(e_y \cdot f)(z *x)=f((z *x)*y)=f((z*y)*(x *y))=(e_{x *y}\cdot f)(z*y)=(e_y e_{x *y} \cdot f)(z)$$
and $$(e_x^{-1}\cdot f)(z)=f(z *^{-1}x)=f(z*\rho_X(x))=(e_{\rho_X(x)}\cdot f)(z).$$
Thus, the left action of $G_{(X,\rho_X)}$ on $\Hom(X,A)$ is well-defined. 
\par

Let $f:G_{(X,\rho_X)} \rightarrow \Hom(X,A)$ be a group $1$-cocycle. Define $\theta_f:X \times X \rightarrow A$ by
$$\theta_f ((x,y))= f(e_y)(x).$$

	We claim that $\theta_f$ is a symmetric rack 2-cocycle. For all $x,y,z \in X$, we have
	\begin{eqnarray*}
		\theta_f(x,z)+\theta_f(x *z,y*z)&=& f(e_z)(x)+f(e_{y*z})(x*z)\\
		&=&f(e_z)(x)+(e_z \cdot f(e_{y*z}))(x)\\
		&=& f(e_z)(x)+(f(e_ze_{y*z})-f(e_z))(x), \quad \text{since $f$ is a group 1-cocycle} \\
		&=&f(e_ze_{y*z})(x)\\
		&=&f(e_ye_z)(x)\\
		&=&(f(e_ye_z)-f(e_y))(x)+f(e_y)(x) \\
		&=&(e_y \cdot f(e_z))(x)+f(e_y)(x), \quad \text{since $f$ is a group 1-cocycle}\\
&=&f(e_z)(x*y)+f(e_y)(x)\\
&=&		\theta_f(x*y,z)+\theta_f(x,y).
	\end{eqnarray*}
Further, since $f(e_y)$ is a symmetric rack homomorphism, we have
$$ \theta_f(x,y)+\theta_f(\rho_X(x),y) = f(e_{y})(x)+f(e_y)(\rho_X(x))= f(e_y)(x)-f(e_y)(x)=0.$$
Since $f$ is a group 1-cocycle, we get
\begin{eqnarray*}
		\theta_f(x,y)+\theta_f(x*y,\rho_X(y))&=&f(e_y)(x)+f(e_{\rho_X(y)})(x \ast y)\\
		&=&f(e_y)(x)+(e_y \cdot f(e_{\rho_X(y)}))(x)\\
		&=&f(e_y)(x)+(f(e_ye_{\rho_X(y)})-f(e_y))(x)\\
		&=&0,\quad \text{since $f(1)=0$},
	\end{eqnarray*}
and hence $\theta_f$ is a symmetric rack 2-cocycle. This gives a map $Z^1 \big(G_{(X,\rho_X)}, \Hom(X,A)\big) \rightarrow H^2_{SR} \big((X,\rho_X),A \big)$ defined by $f \mapsto [\theta_f]$. Since the group structure in both the domain and the codomain is the point-wise addition of maps, it follows that $f \mapsto [\theta_f]$ is a group homomorphism. If $f \in B^1(G_{(X,\rho_X)}, \Hom(X,A)),$ then for all $a \in G_{(X,\rho_X)}$ and $x\in X$, we have  $$f(a)(x)=(a \cdot \phi-\phi)(x)$$ for some $\phi \in \Hom(X,A)$. Thus, for all $x,y \in X$, we have 
	$$\theta_f(x,y)=f(e_y)(x)=(e_y \cdot \phi -\phi)(x)=(e_y \cdot \phi)(x)-\phi(x)=\phi(x*y)-\phi(x).$$
This shows that $\theta_f \in B^2((X,\rho_X);A),$ and hence we obtain a map $$\Psi:H^1 \big(G_{(X,\rho_X)}, \Hom(X,A)\big) \rightarrow H^2_{SR} \big((X,\rho_X),A \big).$$
\par
For the reverse direction,  let $\sigma \in Z_{SR}^2((X,\rho_X),A)$. Then $\sigma$ satisfies the symmetric rack 2-cocycle conditions
	\begin{equation}\label{cocycle condition (i)}
		\sigma(x,z)+\sigma(x*z,y*z)=\sigma(x,y)+\sigma(x*y,z),
	\end{equation}
	\begin{equation}\label{cocycle condition (ii)}
		\sigma(x,y)+\sigma(\rho_X(x),y)=0,
	\end{equation}
	\begin{equation}\label{cocycle condition (iii)}
		\sigma(x,y)+\sigma(x*y,\rho_X(y))=0,
	\end{equation}
	for all $x,y,z \in X.$ Let us define $\sigma':X \rightarrow \Hom(X,A)$ by $$\sigma'(x)(y)=\sigma(y,x)$$
for all $x, y \in X$. Since $\sigma'(x)(\rho_X(y))=\sigma(\rho_X(y),x)=-\sigma(x,y)$ by the cocycle condition \eqref{cocycle condition (ii)}, and hence $\sigma'(x) \in \Hom(X,A)$. Identifying $X$ with the set $\{e_x\mid x \in X\}$, we have a map $\tau_{\sigma}:X \rightarrow G_{(X,\rho_X)} \ltimes \Hom(X,A)$ defined by $$\tau_{\sigma}(e_x)=(e_x,\sigma'(x))$$
for all $x \in X$.	 Note that
	\begin{equation*}
		\tau_{\sigma}(e_x)\tau_{\sigma}(e_{\rho_X(x)})= (e_x,\sigma'(x))(e_{\rho_X(x)},\sigma'(\rho_X(x)))
		=\big(e_xe_{\rho_X(x)},\sigma'(x)+e_x \cdot \sigma'(\rho_X(x))\big).
	\end{equation*}
Further, the symmetric rack 2-cocycle condition \eqref{cocycle condition (iii)} gives
\begin{eqnarray*}
\big( \sigma'(x)+e_x \cdot \sigma'(\rho_X(x)) \big)(y) &=& \sigma'(x)(y)+(e_x \cdot \sigma'(\rho_X(x)))(y)\\
&=&\sigma'(x)(y)+\sigma'(\rho_X(x))(y*x)\\
&=&\sigma(y,x)+\sigma(y*x,\rho_X(x))\\
&=& 0.
\end{eqnarray*}
 Thus, $\sigma'(x)+e_x \cdot \sigma'(\rho_X(x))$ is the trivial map, and consequently $\tau_{\sigma}(e_x)\tau_{\sigma}(e_{\rho_X(x)})=(e,0),$ where $e$ is the identity of $G_{(X,\rho_X)}.$ Similarly, we can prove that $	\tau_{\sigma}(e_{\rho_X(x)})\tau_{\sigma}(e_x)=(e,0)$. Next, we consider
	\begin{eqnarray*}
&& \tau_{\sigma}(e_y) \, \tau_{\sigma}(e_{x*y}) \, \tau_{\sigma}(e_y)^{-1} \, \tau_{\sigma}(e_x)^{-1}\\
		&=&(e_y,\sigma'(y)) \, (e_{x*y},\sigma'({x*y})) \, (e_y,\sigma'(y))^{-1} \, (e_x,\sigma'(x))^{-1}\\
		&=&(e_y,\sigma'(y)) \, (e_{x*y},\sigma'({x*y})) \, (e_y^{-1},e_y^{-1}\cdot (-\sigma'(y))) \, (e_x^{-1},e_x^{-1} \cdot (-\sigma'(x)))\\
		&=& \big(e_y e_{x*y},\sigma'(y)+e_y \cdot \sigma'({x*y}) \big) \, \big(e_y^{-1}e_x^{-1},e_y^{-1}\cdot (-\sigma'(y))+e_y^{-1}e_x^{-1} \cdot (-\sigma'(x)) \big)\\
		&=&(e,\sigma'(y)+e_y \cdot \sigma'({x*y}))+ (e_ye_{x*y})\cdot (e_y^{-1}\cdot (-\sigma'(y))+e_y^{-1}e_x^{-1} \cdot (-\sigma'(x))).
	\end{eqnarray*}
The symmetric rack 2-cocycle condition \eqref{cocycle condition (i)} gives
\begin{eqnarray*}
&&	(\sigma'(y)+e_y \cdot \sigma'({x*y}))+ (e_ye_{x*y})\cdot (e_y^{-1}\cdot (-\sigma'(y))+e_y^{-1}e_x^{-1} \cdot (-\sigma'(x)))(z)\\
		&=&\sigma(z,y)+\sigma(z*y,x*y)+(e_ye_{x*y}) \cdot(e_y^{-1} \cdot (-\sigma'(y))+e_y^{-1}e_x^{-1}\cdot (-\sigma'(x)))(z)\\
		&=&\sigma(z,y)+\sigma(z*y,x*y)+(e_x\cdot (-\sigma'(y))+ (-\sigma'(x)))(z)\\
		&=&\sigma(z,y)+\sigma(z*y,x*y)+(-\sigma'(y))(z*x)-\sigma'(x)(z)\\
		&=&\sigma(z,y)+\sigma(z*y,x*y)-\sigma(z*x,y)-\sigma(z,x)\\
		&=&0.
	\end{eqnarray*}
It follows that $ \tau_{\sigma}(e_y) \tau_{\sigma}(e_{x*y}) \tau_{\sigma}(e_y)^{-1}\tau_{\sigma}(e_x)^{-1}=(e,0)$ for all $x, y \in X$, and hence we can extend $\tau_{\sigma}$ to a group homomorphism $\tau_{\sigma}:G_{(X,\rho_X)} \rightarrow G_{(X,\rho_X)} \ltimes \Hom(X,A)$.
\par 	
	Let us define $\chi_{\sigma}:G_{(X,\rho_X)} \to \Hom(X,A)$  to be the composition $$\chi_{\sigma} = p_2 \,\tau_{\sigma}$$
where $p_2:(G_{(X.\rho_X)} \ltimes \Hom(X,A)) \rightarrow \Hom(X,A)$ is the projection onto the second factor. Since $\tau_{\sigma}$ is a group homomorphism, it follows that $\chi_{\sigma}$ is a group 1-cocycle. This gives a map $Z^2_{SR}\big((X,\rho_X),A \big) \rightarrow H^1 \big(G_{(X,\rho_X)}, \Hom(X,A)\big)$  defined by $ \sigma \mapsto [\chi_{\sigma}]$. If $\pi \in B^2_{SR}((X,\rho_X),A)$, then $\pi=\delta^1( \tilde{\pi})$ for some $\tilde{\pi}\in C^1_{SR}((X,\rho_X),A)$, that is, $$\pi(x,y)=\tilde{\pi}(x)-\tilde{\pi}(x*y)$$ for all $x,y \in X$. If $g \in G_{(X,\rho_X)}$, then $g=e_{x_1}^{\e_1}e_{x_2}^{\e_2} \cdots e_{x_n}^{\e_n}$ for some $x_i \in X$ and $\e_i\in \{1,-1\}$. Thus, for all $y \in X$, we have
	\begin{eqnarray*}
		\chi_{\pi}(g)(y)&=&p_2(\tau_{\pi}(g))(y)\\
		&=&p_2(\tau_{\pi}(e_{x_1}^{\e_1}e_{x_2}^{\e_2} \cdots e_{x_n}^{\e_n}))(y)\\
		&=&p_2(\tau_{\pi}(e_{x_1})^{\e_1}\tau_{\pi}(e_{x_2})^{\e_2} \cdots \tau_{\pi}(e_{x_n})^{\e_n})(y)\\
		&=& p_2 \big((e_{x_1},\pi'(x_1))^{\e_1}(e_{x_2},\pi'(x_2))^{\e_2} \cdots (e_{x_n},\pi'(x_n))^{\e_n} \big)(y)\\
		&=&p_2((e_{x_1}^{\e_1}e_{x_2}^{\e_2} \cdots e_{x_n}^{\e_n}, \, t(x_1)+e_{x_1}^{\e_1}\cdot t(x_2))+\cdots +e_{x_1}^{\e_1}e_{x_2}^{\e_2} \cdots e_{x_n}^{\e_n} \cdot t(x_n)))(y)\\
		&=& \big(t(x_1)+e_{x_1}^{\e_1}\cdot t(x_2)+\cdots +e_{x_1}^{\e_1}e_{x_2}^{\e_2} \cdots e_{x_{n-1}}^{\e_{n-1}} \cdot t(x_n) \big)(y),
	\end{eqnarray*} 
where $t(x_i)=\pi'(x_i)$ for $\e_i=1$ and $t(x_i)=e_{x_i}^{-1} \cdot (-\pi'(x_i))$ for $\e_i=-1$. For instance, if all $\e_i=1$, then 
	\begin{eqnarray*}
		\chi_{\pi}(g)(y)&=& \pi(y,x_1)+\pi(y*x_1,x_2)+ \cdots + \pi(((y*x_1)*x_2)\cdots *x_{n-1},x_n)\\
		&=&\tilde{\pi}(y)-\tilde{\pi}(y*x_1)+\tilde{\pi}(y*x_1)-\tilde{\pi}((y*x_1*)x_2)+\cdots +\\
		&&\tilde{\pi}(((y*x_1)*x_2)\cdots *x_{n-1})-\tilde{\pi}(((y*x_1)*x_2)\cdots x_n)\\
		&=&\tilde{\pi}(y)-\tilde{\pi}((y*x_1)*x_2)\cdots x_n)\\
		&=&\tilde{\pi}(y)-(e_{x_1}^{\e_1}e_{x_2}^{\e_2} \cdots e_{x_n}^{\e_n}) \cdot \tilde{\pi}(y)\\
		&=&\tilde{\pi}(y)-g \cdot \tilde{\pi}(y)=(\tilde{\pi}-g \cdot \tilde{\pi})(y).
	\end{eqnarray*}
The cases when some $\e_i=-1$ can be dealt with in a similar way through a routine computation. Thus, $\chi_{\pi}(g)=\tilde{\pi}-g \cdot \tilde{\pi}$, which shows that $\chi_{\pi} \in B^1(G_{(X,\rho_X)}, \Hom(X,A))$. Hence, we obtain a map
$$\Psi: H^2_{SR}\big((X,\rho_X),A \big) \rightarrow H^1 \big(G_{(X,\rho_X)}, \Hom(X,A)\big)$$
and it is easy to see that $\Psi$ and $\Phi$ are inverses to each other, and hence $H^2_{SR}\big((X,\rho_X),A\big) \cong H^1 \big(G_{(X,\rho_X)}, \Hom(X,A)\big)$, which is desired.	
\end{proof}
\medskip

\begin{ack}
BK thanks CSIR for the PhD research fellowship. DS thanks IISER Mohali for the PhD research fellowship. MS is supported by the SwarnaJayanti Fellowship
grants DST/SJF/MSA-02/2018-19 and SB/SJF/2019-20/04.
\end{ack}
\medskip

\section{Declaration}
The authors declare that there is no data associated to this paper and that there are no conflicts of interests.
\bigskip

\end{document}